\documentclass{article}
\usepackage{graphicx} 
\usepackage{amsmath,amsfonts,amssymb,amsthm,enumerate}
\usepackage{color}
\usepackage[margin=1in]{geometry}

\usepackage{multirow}
\usepackage{booktabs}
\usepackage{threeparttable}
\usepackage{algorithm, algpseudocode}

\usepackage{hyperref}
\newtheorem{theorem}{Theorem}

\begin{document}
\title{The Adaptive Solution of High-Frequency Helmholtz Equations via Multi-Grade Deep Learning}
\author{Peiyao Zhao\thanks{School of Mathematics, Jilin University, Changchun, 130012, P. R. China. E-mail address: {\it zhaopy23@mails.jlu.edu.cn}.}, Rui Wang\thanks{School of Mathematics, Jilin University, Changchun 130012, P. R. China. E-mail address: {\it rwang11@jlu.edu.cn}. All correspondence should be sent to this author.}, Tingting Wu\thanks{School of Mathematics and Statistics, Shandong Normal University, Jinan 250358, P. R. China. E-mail address: {\it tingtingwu@sdnu.edu.cn}.} \ and \ Yuesheng Xu\thanks{Department of Mathematics and Statistics, Old Dominion University, Norfolk, VA 23529, United States of America. E-mail address: {\it y1xu@odu.edu}. All correspondence should be sent to this author.}}

\date{}
\maketitle
\begin{abstract}
The Helmholtz equation is fundamental for modeling wave propagation in acoustics, electromagnetics, and geophysics; however, high-frequency regimes remain notoriously difficult due to the severe numerical ``pollution effect.'' We propose FD-MGDL, an adaptive framework that synergizes finite difference discretizations with Multi-Grade Deep Learning (MGDL) to efficiently resolve high-frequency wavefields. Unlike standard physics-informed neural networks (PINNs), which frequently suffer from spectral bias and heavy automatic differentiation overhead, FD-MGDL employs a progressive, grade-wise training strategy that incrementally incorporates shallow sub-networks to refine residual errors. By leveraging ReLU activations in refinement grades, the framework reformulates the highly non-convex global optimization problem into a sequence of tractable convex subproblems, dramatically improving training stability and convergence reliability. Extensive numerical experiments in two and three dimensions with wavenumbers up to $\kappa=200$ demonstrate that FD-MGDL significantly outperforms single-grade networks and standard benchmark neural solvers in both accuracy and computational efficiency. When applied to an inhomogeneous concave velocity model, the proposed method accurately captures wave focusing and caustic formations, markedly outperforming classical five-point finite difference schemes in resolving sharp phase transitions and peak amplitudes. These results establish FD-MGDL as a robust, scalable, and mathematically grounded paradigm for high-frequency wave simulation in complex media.
\end{abstract}

\textbf{Key words}: Deep Neural Networks, Helmholtz equation, High-frequency waves, Multi-grade deep learning, Finite difference discretization

\section{Introduction}
\label{introduction}

Deep learning has rapidly expanded beyond traditional data-driven tasks into AI-driven scientific computing, particularly for solving partial differential equations (PDEs). Representative advances include Physics-Informed Neural Networks (PINNs) \cite{raissi2019physics}, which incorporate physical laws into neural network training for forward and inverse problems, and the Fourier Neural Operator (FNO) \cite{li2021fourier}, which learns discretization-invariant mappings between function spaces. Deep learning has also been applied to high-dimensional PDEs through backward stochastic differential equations (BSDEs) \cite{e2017deep,han2018solving}, mitigating the curse of dimensionality. The Deep Ritz Method \cite{e2018deep} and the Deep Galerkin Method (DGM) \cite{sirignano2018dgm} provide mesh-free neural approaches for variational and high-dimensional problems. More broadly, neural operators \cite{kovachki2023neural} generalize neural networks to learn solution operators independent of discretization. Multi-scale architectures such as MscaleDNN \cite{liu2020multi} incorporate frequency-domain techniques, while the Koopman Neural Operator \cite{xiong2024koopman} reformulates PDE dynamics as linear prediction problems to enhance long-term accuracy. Together, these developments demonstrate the transformative potential of deep learning in scientific computing.

Deep neural networks (DNNs) are increasingly utilized to solve the Helmholtz equation in acoustics, electromagnetics, and wave propagation. Recent developments encompass meshless ray-based DNNs \cite{yang2023novel,yang2022mesh} that bypass adaptive meshing in high-frequency regimes, ray-based frameworks \cite{yeung2022learning} that extract directional wave features to enhance discontinuous Galerkin methods, hybrid learning–multigrid strategies \cite{azulay2022multigrid} employing CNN preconditioners for heterogeneous media, and deep architectures for resonator inverse design \cite{dogra2023deep}. While Physics-Informed Neural Network (PINN) formulations \cite{cho2023separable,schoder2024feasibility,urban2025unveiling,wang2021understanding} enable real-time optimization, their scope remains largely restricted to low wavenumbers ($\kappa < 10$) due to severe spectral bias and optimization stiffness. Furthermore, most existing neural solvers focus on wavenumber-linear phase factors ($e^{i\kappa x}$) rather than pure trigonometric variations ($\sin(\kappa x)$), leaving high-frequency, strongly oscillatory regimes, particularly in three dimensions, insufficiently addressed.

High-wavenumber Helmholtz problems present severe computational bottlenecks due to the rapidly oscillatory nature of their solutions. Fundamentally, this challenge stems from the spectral bias (or frequency principle) of DNNs \cite{basri2019convergence,xu2020frequency}, wherein standard architectures inherently prioritize low-frequency components during training. As $\kappa$ increases, solution dynamics shift toward higher frequencies where conventional networks stagnate or diverge \cite{lerer2024multigrid,song2023simulating}. Although DNNs are universal approximators, resolving such rapid spatial variations without specialized architectural interventions or extreme network depth leads to severe scaling limitations as problem complexity grows \cite{lerer2024multigrid,liu2020multi}.

Beyond spectral bias, high-frequency Helmholtz problems exhibit rapid spatial variations, a challenge directly analogous to the ``pollution effect" in classical numerical analysis, which makes accurate wavefield resolution exceptionally difficult for standard Physics-Informed Neural Network (PINN). This issue is further compounded by the heavy computational burden of repeated automatic differentiation required to evaluate differential operators in the high-frequency regime. Consequently, DNN solvers have largely remained confined to two-dimensional problems at modest wavenumbers, leaving the high-$\kappa$ regime underexplored.

To address these limitations, we propose a finite difference–based multi-grade deep learning (FD-MGDL) framework. Building on the Multi-Grade Deep Learning (MGDL) methodology \cite{fang2024addressing,jiang2024deep,xu2025multi,xu2025sal}, which systematically mitigates spectral bias through grade-wise residual learning, we incorporate finite difference discretization to replace the computationally expensive automatic differentiation (AD) operators used in standard PINNs. We introduce an adaptive strategy that dynamically adjusts the number of grades based on problem difficulty. The initial grade employs a shallow network with two hidden layers to capture global wave structures, whereas each subsequent grade incrementally adds a single hidden layer to resolve residual errors. Unlike conventional SGDL models—which require redesigning the architecture and retraining the entire network from scratch if target accuracy is not reached—MGDL adaptively appends new grades until the precision requirement is satisfied. Guided by the theoretical foundation of MGDL, this work investigates the numerical performance of FD-MGDL on Helmholtz equations across high-wavenumber regimes ($50 \le \kappa \le 200$).

We evaluate the proposed FD-MGDL against AD-MGDL, FD-SGDL and several benchmark methods, including PINN \cite{raissi2019physics}, Mscale \cite{liu2020multi}, FBPINN \cite{moseley2023finite}, SIREN \cite{sitzmann2020implicit}, and Pre-PINN \cite{ryck2024operator}. Comprehensive numerical experiments demonstrate that FD-MGDL consistently outperforms all baseline methods in accuracy, computational efficiency, and numerical stability across all tested scenarios.

To evaluate robustness in heterogeneous media, we apply FD-MGDL to a concave velocity model characterized by spatially varying velocities that induce lensing, caustics, and multipathing phenomena. These features give rise to localized phase transitions and amplitude concentrations that present severe challenges for numerical solvers. While traditional finite difference and finite element methods require extremely fine meshes to resolve such dynamics—incurring heavy computational costs and risking numerical dispersion, our 5-point FD-MGDL framework cleanly captures focal regions and rapid oscillations, substantially outperforming the classical 5-point finite difference scheme in both spatial resolution and numerical stability.

The core novelties of the proposed FD-MGDL framework are summarized as follows:
\begin{itemize}
    \item \textbf{Multi-Grade Residual Training:} Overcomes spectral bias by replacing a single deep network with sequential, shallow residual updates, reducing the cumulative optimization workload by over 80\%. This harnesses the expressive capacity of DNNs while bypassing optimization stiffness and facilitates seamless architectural adaptivity, thereby resolving the structural rigidity of traditional deep learning frameworks.
    
    \item \textbf{FD--Neural Hybridization:} Replaces automatic differentiation with finite difference stencils during residual training. This eliminates vanishing second-derivative signals on non-smooth activation functions (such as ReLU) while preserving a continuous, mesh-free representation of the wavefield.
    
    \item \textbf{Dual-Activation Strategy:} Combines sinusoidal activations in Grade 1 to capture global, highly oscillatory wave patterns with ReLU activations in subsequent grades to resolve localized residual errors without introducing spurious high-frequency ringing.
    
    \item \textbf{Monotonicity and Convexity Guarantees:} Establishes a theoretical monotonicity guarantee ensuring non-increasing loss as new grades are added, while reformulating individual shallow subproblems into equivalent convex optimizations to guarantee stable convergence.
\end{itemize}

The remainder of this paper is organized as follows. Section~\ref{FD-MGDL} introduces the overarching FD-MGDL framework. Section~\ref{Adaptive algorithm} details the adaptive training scheme, demonstrating how it recasts the highly non-convex optimization objective into a sequence of tractable convex subproblems. Section~\ref{details} provides implementation specifics alongside an ablation study on grade-wise depth allocation. Sections~\ref{2d} and \ref{3d} evaluate the method on two- and three-dimensional Helmholtz benchmarks, respectively. Section~\ref{FDM} presents a direct comparative analysis against classical finite difference methods. Section~\ref{FD_extensions} investigates the integration of higher-order accuracy discretizations into the loss formulation. Section~\ref{concave} examines performance on a complex heterogeneous domain using a concave velocity model. 
Finally, Section~\ref{conclusion} concludes the paper, while the Appendix provides supplementary material detailing the computational setup, efficiency analysis, and statistical robustness across random initializations.

\section{FD-MGDL for the Helmholtz equation}
\label{FD-MGDL}
In this section, we introduce a finite difference-based MGDL approach for the numerical solution of the Helmholtz equation.

We begin by describing the Helmholtz equation. For each $d\in\mathbb{N}$, we define $\mathbb{N}_d:=\{1,2,\ldots,d\}$ and $\mathbb{Z}_d:=\{0,1,\ldots,d-1\}$. Let $\Delta:=\sum_{j\in\mathbb{N}_d}\partial^2/\partial x_j^2$ denote the Laplace operator, $\kappa$ represent the wavenumber and $f$ be the source term. The Helmholtz equation, expressed as the elliptic partial differential equation 
\begin{equation*}
    (\Delta + \kappa^2)u=f,
\end{equation*}
represents a time-independent wave equation, where $u$ is the unknown solution to be learned.
We consider the boundary value problem for the Helmholtz equation 
\begin{equation}\label{Helmholtz}
\begin{cases}
\left( \Delta+\kappa^2(\mathbf{x}) \right)u(\mathbf{x})=f(\mathbf{x}),&\mathbf{x}\in \Omega, \\
\mathcal{B} \left( u(\mathbf{x}) \right)= 0,&\mathbf{x}\in \Gamma,
\end{cases}
\end{equation}
where $\Omega \subset \mathbb{R}^d$ is an open domain with boundary $\Gamma$, and $\mathcal{B}$ is the nonlinear operator representing the boundary conditions. 

DNN methods have emerged as a prominent research area for the numerical solution of PDEs. These approaches utilize DNNs as flexible parametric representations of unknown solutions, leveraging their universal approximation properties to handle complex functional forms. A DNN of depth $D \in \mathbb{N}$ is structured as a sequence of layers: an input layer, $D-1$ hidden layers, and a final output layer. Let $d, s \in \mathbb{N}$ represent the dimensions of the input and output spaces, respectively. For each layer $j \in \mathbb{Z}_{D+1}$, we denote the number of neurons by $d_j$, with the boundary conditions $d_0 = d$ and $d_D = s$. The network parameters for each layer $j \in \mathbb{N}_{D}$ are defined as weight matrix $\mathbf{W}_j \in \mathbb{R}^{d_j \times d_{j-1}}$ and bias vector $\mathbf{b}_j \in \mathbb{R}^{d_j}$. Let $\sigma: \mathbb{R} \to \mathbb{R}$ be a fixed activation function, applied componentwise when acting on vectors. For an input vector $\mathbf{x} = (x_1, \dots, x_d)^\top \in \mathbb{R}^d$, the transformation performed by the first hidden layer is expressed as
\begin{equation*}
    \mathcal{H}_1\left(\left\{\mathbf{W}_1, \mathbf{b}_1\right\};\mathbf{x}\right) := \sigma\left(\mathbf{W}_1 \mathbf{x} + \mathbf{b}_1\right).
\end{equation*}

For a DNN with depth $D\geq3$, the outputs of the subsequent hidden layer are defined recursively for $j\in\mathbb{N}_{D-2}$ as
\begin{equation*}
    \mathcal{H}_{j+1}\left(\left\{\mathbf{W}_i, \mathbf{b}_i:i\in\mathbb{N}_{j+1}\right\};\mathbf{x}\right):= \sigma\left(\mathbf{W}_{j+1} \mathcal{H}_j\left(\left\{\mathbf{W}_i, \mathbf{b}_i:i\in\mathbb{N}_{j}\right\};\mathbf{x}\right) + \mathbf{b}_{j+1}\right).
\end{equation*}
The final output of the DNN is represented by a vector-valued function $\mathcal{N}_D: \mathbb{R}^d \to \mathbb{R}^s$. This map, parameterized by the collection of weights and biases $\{\mathbf{W}_j, \mathbf{b}_j: j\in\mathbb{N}_D\}$, is given by:
\begin{equation*}\label{DNN}
    \mathcal{N}_D\left(\left\{\mathbf{W}_j, \mathbf{b}_j:j\in\mathbb{N}_D\right\}; \mathbf{x} \right) := \mathbf{W}_D \mathcal{H}_{D-1}\left(\left\{\mathbf{W}_j, \mathbf{b}_j:j\in\mathbb{N}_{D-1}\right\}; \mathbf{x} \right) + \mathbf{b}_D,\ \ \mathbf{x}\in\mathbb{R}^d.
\end{equation*}

The PINN framework \cite{raissi2019physics} utilizes a DNN to approximate the solution of a PDE by incorporating the governing equations and boundary conditions directly into the learning process. For the Helmholtz equation \eqref{Helmholtz}, the PINN loss function is typically composed of two primary terms: the PDE residual loss and the boundary condition loss. Let $\Theta := \{\mathbf{W}_j, \mathbf{b}_j:j\in\mathbb{N}_D\}$ denote the network parameters. For interior collocation points $\{\mathbf{x}_j:j\in\mathbb{N}_{N_{int}}\}$ and boundary points $\{\tilde{\mathbf{x}}_j:j\in\mathbb{N}_{N_b}\}$, the components of the loss function are defined as follows:
\begin{itemize}
    \item {\bf PDE Residual Loss}: Quantifies the degree to which the network output $\mathcal{N}_D$ satisfies the Helmholtz operator:
\begin{equation*}
    \mathcal{L}_{PDE}(\Theta) := \frac{1}{N_{int}} \sum_{j\in\mathbb{N}_{N_{int}}}\left | (\Delta+\kappa^2(\mathbf{x}_j) )\mathcal{N}_D(\Theta;\mathbf{x}_j)-f(\mathbf{x}_j) \right |^2, 
\end{equation*}
\item {\bf Boundary Condition Loss}: Enforces the boundary conditions $\mathcal{B}$ at the specified points:
\begin{equation*}
    \mathcal{L}_B(\Theta):=\frac{1}{N_{b}}\sum_{j\in\mathbb{N}_{N_b}}\left | \mathcal{B}(\mathcal{N}_D(\Theta;\tilde{\mathbf{x}}_j)) \right | ^2.
\end{equation*}
\end{itemize}
The total training objective is generally the weighted sum of these components, minimized through gradient-based optimization.

The selection of training points significantly impacts the convergence and accuracy of the model. Current research highlights two primary strategies:
\begin{enumerate}
    \item {\bf Random Sampling:} Points are drawn from prescribed probability distributions over the domain \cite{cen2024deep,raissi2019physics}.
    \item {\bf Adaptive Sampling:} Points are iteratively selected in regions where the PDE residual is high to improve the model's local resolution \cite{chen2024adaptive,gao2023failure}.
\end{enumerate}

The training of a PINN is framed as a multi-objective optimization problem. The total loss function is defined as a weighted sum of the PDE residual loss and the boundary condition loss:
\begin{equation}\label{loss-pinn}
    \mathcal{L}_{PINN}(\Theta):= \lambda_{PDE}\mathcal{L}_{PDE}\left( \Theta\right) + \lambda_B\mathcal{L}_{B}\left( \Theta\right).
\end{equation}
Core components of the model include
\begin{itemize}
    \item {\bf Weighting Coefficients:} The positive hyperparameters $\lambda_{PDE}$ and $\lambda_{B}$ scale the relative contribution of each loss term.
    \item {\bf Optimization Dynamics:} Selecting appropriate weights is critical; imbalances between terms can lead to ``gradient pathologies'', where a single term dominates the gradient flow and hinders overall convergence.
    \item {\bf Performance Impact:} Research indicates that the precise tuning of these coefficients dictates the computational efficiency, numerical stability, and convergence accuracy of the model \cite{liu2021dual,wang2021understanding,xiang2022self}.
\end{itemize}

The objective is to identify the optimal network parameters $\Theta^*$ by minimizing the total loss over the combined training dataset $\mathbb{D} := \{\mathbf{x}_j:j\in\mathbb{N}_{N_{int}}\}\cup\{\tilde{\mathbf{x}}_k:k\in\mathbb{N}_{N_b}\}$:
\begin{equation*}
    \Theta^* = \arg\min_{\Theta} \mathcal{L}_{PINN}(\Theta).
\end{equation*}
This minimization is typically performed using gradient-based optimization algorithms, such as Adam or L-BFGS, with gradients computed via backpropagation.

Despite its versatility, the standard PINN framework faces two primary challenges:
\begin{enumerate}
    \item {\bf Weak Enforcement of Constraints:} Because boundary conditions are enforced as soft constraints through the loss function, the resulting solution may exhibit inconsistencies or residuals at the boundaries.
    \item {\bf Optimization Stiffness:} The optimization landscape can become excessively ``stiff'' due to the competing objectives of the PDE and boundary terms. This competition often leads to training instabilities that can impede or entirely prevent convergence \cite{wang2021understanding,wang2022when}.
\end{enumerate}

To mitigate these issues, we adopt an approach in which derivative terms, typically computed via Automatic Differentiation (AD), are replaced by Finite Difference (FD) approximations when constructing the PDE loss. This formulation allows the governing equation and boundary conditions to be naturally incorporated into a single discrete system, from which a unified loss function is defined, thereby alleviating issues associated with weak constraint enforcement and optimization stiffness. We emphasize that the MGDL framework is flexible and not restricted to this particular setting; depending on the application, it can accommodate both AD- and FD-based formulations \cite{zeng2023multi}.

We consider the Helmholtz equation with Dirichlet boundary conditions on a bounded domain $\Omega \subset \mathbb{R}^d$ with boundary $\Gamma := \partial \Omega$ as a representative model problem:
\begin{equation}\label{Helmholtz-Dirichlet}
    \left\{\begin{aligned}
        &\left( \Delta+\kappa^2(\mathbf{x}) \right) u(\mathbf{x}) = f(\mathbf{x}),&&\mathbf{x}\in \Omega,\\
        &u(\mathbf{x}) = g(\mathbf{x}),&&\mathbf{x}\in \Gamma.
    \end{aligned}\right.
\end{equation}
To ensure well-posedness, we assume that the corresponding homogeneous Dirichlet problem, obtained by setting $f(\mathbf{x})=0$ and $g(\mathbf{x})=0$ in \eqref{Helmholtz-Dirichlet}, admits only the trivial solution $u\equiv 0$. Equivalently, zero is not a Dirichlet eigenvalue of the operator $-\Delta-\kappa^2(\mathbf{x})$. Under this non-resonance condition, problem \eqref{Helmholtz-Dirichlet} is uniquely solvable. For a constant wavenumber $\kappa$, this requirement simplifies to $\kappa^2 \notin \rho(-\Delta_D)$, where $\rho(-\Delta_D)$ denotes the spectrum of the negative Laplacian with homogeneous Dirichlet boundary conditions. On a hypercubic domain $\Omega:=(a,b)^d$, the Dirichlet eigenvalues are explicitly given by
\begin{equation*}
    \lambda_{\mathbf{n}}:=\frac{\pi^2}{(b-a)^2}\sum_{i=1}^d n_i^2, \quad \mathbf{n}=(n_1,\dots,n_d) \in \mathbb{Z}_+^d.
\end{equation*}
All wavenumbers considered in our numerical experiments satisfy this continuous non-resonance condition, thereby guaranteeing unique solvability for all benchmark problems.

We approximate the exact solution $u$ by a DNN $\mathcal{N}_D(\Theta; \cdot)$ with $D$ layers, and approximate the Laplace operator $\Delta$ using a second order central difference scheme. 
Let $m\in\mathbb{N}$ and define the set of training points
\begin{equation*}
    \mathbf{x}_{\mathbf{j}}:=(x_{\mathbf{j},i}:i\in\mathbb{N}_d),\ \ \mbox{where}\ \  \mathbf{j}:=(j_i:i\in\mathbb{N}_d)\in\mathbb{Z}_{m+2}^d\ \ \mbox{and}\ \  x_{\mathbf{j},i}:=a+j_ih\ \ \mbox{with}\ \  h=(b-a)/(m+1).
\end{equation*}
At interior points $\mathbf{x}_{\mathbf{j}} \in \Omega$, the solution is approximated by
\begin{equation*}
    u(\mathbf{x}_{\mathbf{j}})\approx\mathcal{N}_D(\Theta; \mathbf{x}_{\mathbf{j}}).
\end{equation*}
A key modification is required when the finite difference stencil involves boundary nodes. Instead of evaluating the neural network at boundary points, we directly impose the prescribed boundary data by defining
\begin{equation*}\label{boundary-condition}
    \widetilde{\mathcal{N}}_D(\Theta;\mathbf{x}):=
    \left\{\begin{aligned}
        &\mathcal{N}_D(\Theta;\mathbf{x}),&\mathbf{x}\in \Omega,\\
        &g(\mathbf{x}),&\mathbf{x} \in \Gamma.
    \end{aligned}\right.
\end{equation*}
This construction ensures that the boundary conditions are enforced exactly at the discrete level. 

Using  $\widetilde{\mathcal{N}}_D$, for the interior points, the second-order central difference approximation of the second derivative at an interior point $\mathbf{x}_{\mathbf{j}}$ is given by
\begin{equation*}
    \frac{\partial^2u}{\partial x_i^2}(\mathbf{x}_{\mathbf{j}}) \approx \frac{\widetilde{\mathcal{N}}_D(\Theta;\mathbf{x}_{\mathbf{j}+\mathbf{e}_i}) - 2\widetilde{\mathcal{N}}_D(\Theta;\mathbf{x}_{\mathbf{j}}) + \widetilde{\mathcal{N}}_D(\Theta;\mathbf{x}_{\mathbf{j}-\mathbf{e}_i})}{h^2},\quad i\in \mathbb{N}_d,
\end{equation*}
where $\mathbf{e}_i$ denotes the $i$-th coordinate unit vector in $\mathbb{R}^d$. 

For each interior training point $\mathbf{x}_{\mathbf{j}}\in\Omega$, we define the local residual by
\begin{equation*}
    \mathcal{E}(\Theta;\mathbf{x}_\mathbf{j}):=f(\mathbf{x}_{\mathbf{j}}) - \left[ \sum_{i\in \mathbb{N}_d} \left(\frac{\widetilde{\mathcal{N}}_D(\Theta;\mathbf{x}_{\mathbf{j}+\mathbf{e}_i}) - 2\widetilde{\mathcal{N}}_D(\Theta;\mathbf{x}_{\mathbf{j}}) + \widetilde{\mathcal{N}}_D(\Theta;\mathbf{x}_{\mathbf{j}-\mathbf{e}_i})}{h^2} \right)+\kappa^2(\mathbf{x}_{\mathbf{j}}) \widetilde{\mathcal{N}}_D(\Theta;\mathbf{x}_{\mathbf{j}}) \right].
\end{equation*}
The overall loss function is then defined as
\begin{equation}\label{LOSS}
    \mathcal{L}(\widetilde{\mathcal{N}}_D,\Theta):=\frac{1}{m^d} \sum_{\mathbf{j}\in\mathbb{N}_m^d} \mathcal{E}^2(\Theta;\mathbf{x}_\mathbf{j}).
\end{equation}

The finite difference-based SGDL (FD-SGDL) model described above minimizes $\mathcal{L}(\widetilde{\mathcal{N}}_D,\Theta)$ with respect to $\Theta$, 
yielding the optimal parameters $\Theta^*:=\{ \mathbf{W}_j^*,\mathbf{b}_j^*:j\in\mathbb{N}_D\}$ and the corresponding DNN approximation $\mathcal{N}_D(\Theta^*;\cdot)$. This FD-enhanced construction embeds the boundary conditions into the discrete PDE operator, effectively converting 
boundary conditions from soft penalty terms into hard constraints. As a result, it eliminates the detrimental competition between $\mathcal{L}_{PDE}$ and $\mathcal{L}_B$ in \eqref{loss-pinn}, substantially stabilizes the optimization process, and improves solution accuracy in the vicinity of $\Gamma$. Moreover, by exploiting information from neighboring grid points, the finite difference formulation has been observed to offer enhanced numerical robustness compared with AD-based residual constructions.






\section*{Finite Difference-Based Multi-Grade Deep Learning (FD-MGDL)}

We introduce the \textbf{FD-MGDL framework}, designed to enhance the accuracy and stability of numerical solutions to the Helmholtz equation \eqref{Helmholtz-Dirichlet}. The core strategy involves decomposing a DNN of total depth $D$ into $L$ successive \textbf{grades}, where each grade consists of a Shallow Neural Network (SNN). Specifically, the $l$-th grade is associated with an SNN $\mathcal{N}_{D_l}$ of depth $D_l$, satisfying $1<D_l<D$ and $\sum_{l\in\mathbb{N}_{L}}D_l=D+L-1$. For each grade $l$, we denote by $\Theta_l:=\left\{ \mathbf{W}_{l,j},\mathbf{b}_{l,j}:j\in\mathbb{N}_{D_l} \right\}$ the $l$-th grade network parameters.

\section*{Grade-1 Initialization:}

For the first grade, we employ an SNN $\mathcal{N}_{D_1}$ with depth $D_1$, the initial output $s_1$ is defined as:
\begin{equation*}
    s_1(\Theta_1;\mathbf{x}):=
    \begin{cases} 
        \mathcal{N}_{D_1}(\Theta_1;\mathbf{x}), & \mathbf{x} \in \Omega, \\ 
        g(\mathbf{x}), & \mathbf{x} \in \Gamma. 
    \end{cases}
\end{equation*}
The optimal parameters $\Theta_1^*$ are determined by minimizing the loss function $\mathcal{L}(s_1, \Theta_1)$. To facilitate the hierarchical structure, we decompose this SNN into a \textbf{feature map} $\mathbf{h}_1(\mathbf{x})$ (comprising the first $D_1-1$ layers) and a \textbf{linear output layer}:
\begin{equation*}
    \mathbf{h}_{1}(\mathbf{x}) := \mathcal{H}_{D_1-1}\left(\{\mathbf{W}_{1,j}^*, \mathbf{b}_{1,j}^*:j\in\mathbb{N}_{D_1-1}\}; \mathbf{x}\right), \quad g_1^*(\mathbf{x}) = \mathbf{W}_{1,D_1}^* \mathbf{h}_1(\mathbf{x}) + \mathbf{b}_{1,D_1}^*.
\end{equation*}

\section*{Recursive Multi-Grade Correction:}
For subsequent grades $l \ge 2$, the framework constructs correction terms recursively. Each grade leverages the feature representations learned in all preceding grades. The $l$-th grade output $g_l$ is defined via the composition of the current SNN with the frozen feature maps of previous grades:
\begin{equation*}
    g_{l}(\Theta_{l};\mathbf{x}) := \mathcal{N}_{D_{l}}(\Theta_{l};\cdot) \circ \mathbf{h}_{l-1}(\mathbf{x}).
\end{equation*}

The \textbf{cumulative approximation} at grade $l$ is the sum of the current correction and all previous optimal increments:
\begin{equation}\label{s-l}
    s_{l}(\Theta_{l};\mathbf{x}) := 
    \begin{cases} 
        g_{l}(\Theta_{l};\mathbf{x}) + \displaystyle\sum_{i=1}^{l-1} g_i^*(\mathbf{x}), & \mathbf{x} \in \Omega, \\ 
        g(\mathbf{x}), & \mathbf{x} \in \Gamma. 
    \end{cases}
\end{equation}

During the optimization of $\mathcal{L}(s_l, \Theta_l)$, only the current parameters $\Theta_l$ are trainable, while all preceding parameters $\Theta_{1}^*,\dots,\Theta_{l-1}^*$ remain \textbf{fixed}. This prevents optimization pathologies like vanishing gradients by focusing the network's capacity on learning the residual error. After training, the $l$-th grade feature representation and solution increment are updated as:
\begin{equation*}
    \mathbf{h}_{l}(\mathbf{x}) := \mathcal{H}_{D_{l}-1}\left(\{\mathbf{W}_{l,j}^*, \mathbf{b}_{l,j}^*:j\in\mathbb{N}_{D_l-1}\}; \mathbf{h}_{l-1}(\mathbf{x})\right), \quad g_l^*(\mathbf{x}) = \mathbf{W}_{l,D_l}^* \mathbf{h}_l(\mathbf{x}) + \mathbf{b}_{l,D_l}^*.
\end{equation*}

\section*{Final Approximation:}
Upon completing $L$ grades, the final FD-MGDL numerical approximation is the aggregate of all learned solution increments:
\begin{equation*}
    s_{L}^*(\mathbf{x}) = \sum_{i=1}^{L} g_i^*(\mathbf{x}), \quad \mathbf{x} \in \Omega.
\end{equation*}

A key theoretical property of the proposed FD-MGDL framework is that the optimal loss sequence $\mathcal{L}(s_{l},\Theta_{l}^*)$, $l\in\mathbb{N}_L$, is non-increasing. This result extends previous findings in function approximation \cite{xu2025multi} and integral equation modeling \cite{jiang2025adaptive} to the domain of differential equations. 

For each $l\in\mathbb{N}_{L}$ and each $j\in\mathbb{N}_{D_l}$, let $d_{l,j}$ denote the width of the $j$-th layer of the neural network at grade $l$. To formalize this property, we partition the parameters $\Theta_{l}$ of each grade $l \in \mathbb{N}_L$ into two distinct subspaces:

\begin{itemize}
    \item Feature Parameters ($\mathcal{M}_{l,1}$): Contains the weights and biases for the first $D_l-1$ layers:
    \begin{equation*}
        \mathcal{M}_{l,1} := \left\{ \{ \mathbf{W}_{l,j}, \mathbf{b}_{l,j} \}_{j=1}^{D_l-1}:  \mathbf{W}_{l,j} \in \mathbb{R}^{d_{l,j} \times d_{l,j-1}}, \mathbf{b}_{l,j} \in \mathbb{R}^{d_{l,j}} \right\}.
    \end{equation*}
    \item Output Parameters ($\mathcal{M}_{l,2}$): Contains the parameters for the final linear layer:
    \begin{equation*}
        \mathcal{M}_{l,2} := \left\{ (\mathbf{W}, \mathbf{b}): \mathbf{W} \in \mathbb{R}^{d_{l,D_l} \times d_{l,D_l-1}}, \mathbf{b} \in \mathbb{R}^{d_{l,D_l}} \right\}.
    \end{equation*}
\end{itemize}
Consequently, any $\Theta_l$ can be uniquely represented as the pair $(\Theta_{l,1}, \Theta_{l,2})$  with
\begin{equation*}
    \Theta_{l,1}:=\left\{ \mathbf{W}_{l,j},\mathbf{b}_{l,j}:j\in\mathbb{N}_{D_{l}-1} \right\}\in \mathcal{M}_{l,1}\ \  \mbox{and}\ \  \Theta_{l,2}:=(\mathbf{W}_{l,D_{l}},\mathbf{b}_{l,D_{l}})\in\mathcal{M}_{l,2}.
\end{equation*}
With the feature function $\mathbf{h}_l$ fixed from the previous grade, we define the linear operator $\mathcal{Z}_{l}: \mathcal{M}_{l,2} \to C(\Omega)$ as:
\begin{equation*}
    \mathcal{Z}_{l}((\mathbf{W}, \mathbf{b})) := \mathcal{A}_h \left( \mathbf{W}\mathbf{h}_l(\cdot) + \mathbf{b} \right),
\end{equation*}
where $\mathcal{A}_h$ is the discrete Helmholtz operator. For any $v \in C(\Omega)$, $\mathcal{A}_h$ is defined by:
\begin{equation}\label{difference-operator}
    (\mathcal{A}_h v)(\mathbf{x}) := \sum_{i=1}^d \frac{v(\mathbf{x}+h\mathbf{e}_i) - 2v(\mathbf{x}) + v(\mathbf{x}-h\mathbf{e}_i)}{h^2} + \kappa^2(\mathbf{x})v(\mathbf{x}).
\end{equation}
Furthermore, let $\|\cdot\|_N$ denote the discrete semi-norm computed over the $m^d$ training points $\mathbf{x}_{\mathbf{j}}$:
\begin{equation}\label{semi-norm}
    \|v\|_N := \sqrt{\frac{1}{m^d} \sum_{\mathbf{j} \in \mathbb{N}_m^d} |v(\mathbf{x}_{\mathbf{j}})|^2 }, \quad v \in C(\Omega).
\end{equation}

The following theorem rigorously establishes that the FD-MGDL framework ensures progressive error reduction.

\begin{theorem}[Monotonicity of Loss]\label{non-increasing}
In the FD-MGDL framework, the optimal loss sequence satisfies:
\begin{equation*}\mathcal{L}(s_{l+1}, \Theta_{l+1}^*) \le \mathcal{L}(s_l, \Theta_l^*), \quad \mbox{for all}\ \  l \in \mathbb{N}_{L-1}.
\end{equation*}
Moreover, the loss remains strictly decreasing unless the $(l+1)$-th grade correction satisfies $\big\| \mathcal{A}_h g_{l+1}^* \big\|_N = 0$.
\end{theorem}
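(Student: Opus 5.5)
The plan is to prove monotonicity by exhibiting, at grade $l+1$, a feasible parameter choice whose loss equals the optimal grade-$l$ loss. Optimality of $\Theta_{l+1}^*$ then gives the inequality directly.

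First, I rewrite the loss in operator form. Since boundary values are imposed exactly and $\mathcal{A}_h$ is linear, the loss at grade $l$ is
\begin{equation*}
\mathcal{L}(s_l,\Theta_l)=\big\|F_l-\mathcal{A}_h g_l(\Theta_l;\cdot)\big\|_N^2 .
\end{equation*}
Here $F_l:=f-\mathcal{A}_h\big(\sum_{i<l}\tilde g_i^*\big)$, where $\tilde g_i^*$ denotes the increment extended by the boundary data in the stencil. More precisely, the boundary contributions $g$ are absorbed into $F_l$ once, and every increment is extended by zero on $\Gamma$. This convention makes the map $g\mapsto$ residual affine, so the decomposition is legitimate.

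With this form, the residual after grade $l$ is $r_l:=F_l-\mathcal{A}_h g_l^*$, and $F_{l+1}=r_l$. Hence
\begin{equation*}
\mathcal{L}(s_{l+1},\Theta_{l+1})=\|r_l-\mathcal{A}_h g_{l+1}(\Theta_{l+1};\cdot)\|_N^2 .
\end{equation*}

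Second, I choose the comparison point. I take any $\Theta_{l+1,1}\in\mathcal{M}_{l+1,1}$ and set the output parameters $\Theta_{l+1,2}=(\mathbf{0},\mathbf{0})$. Then $g_{l+1}\equiv 0$ on $\Omega$, so $\mathcal{Z}_{l+1}(\mathbf 0,\mathbf 0)=0$ and the loss equals $\|r_l\|_N^2=\mathcal{L}(s_l,\Theta_l^*)$. Since $\Theta_{l+1}^*$ minimizes over a set containing this point, the inequality follows. If one prefers not to assume exact global minimization, it suffices to assume the trained loss does not exceed the value at this zero-output initialization.

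For the strictness clause, I expand the square:
\begin{equation*}
\mathcal{L}(s_{l+1},\Theta_{l+1}^*)=\|r_l\|_N^2-2\langle r_l,\mathcal{A}_h g_{l+1}^*\rangle_N+\|\mathcal{A}_h g_{l+1}^*\|_N^2 .
\end{equation*}
Here $\langle\cdot,\cdot\rangle_N$ is the discrete inner product inducing (\ref{semi-norm}). With $\Theta_{l+1,1}^*$ fixed, $\Theta_{l+1,2}\mapsto \|r_l-\mathcal{Z}_{l+1}(\Theta_{l+1,2})\|_N^2$ is a convex quadratic. Its minimizer is therefore characterized by the normal equations: $r_l-\mathcal{A}_hg_{l+1}^*$ is orthogonal to the range of $\mathcal{Z}_{l+1}$ on the grid.

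That range is a linear space containing $\mathcal{A}_h g_{l+1}^*$. So $\langle r_l,\mathcal{A}_h g_{l+1}^*\rangle_N=\|\mathcal{A}_h g_{l+1}^*\|_N^2$, which gives the Pythagorean identity
\begin{equation*}
\mathcal{L}(s_{l+1},\Theta_{l+1}^*)=\mathcal{L}(s_l,\Theta_l^*)-\|\mathcal{A}_h g_{l+1}^*\|_N^2 .
\end{equation*}
This shows the decrease is strict exactly when $\|\mathcal{A}_h g_{l+1}^*\|_N\neq 0$.

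The main obstacle is justifying the first-order optimality in $\Theta_{l+1,2}$ at $\Theta_{l+1}^*$, namely that the optimizer is at least optimal in the last linear layer. I would handle this by stating it as the standing assumption that $\Theta_{l+1}^*$ is a minimizer, which implies partial minimality in $\Theta_{l+1,2}$. Alternatively, I would note that one may always re-solve the convex least-squares problem for the output layer after training, which does not increase the loss. A secondary technical point is bookkeeping the boundary data in the affine decomposition, so that only the first grade carries $g$.
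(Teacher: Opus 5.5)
Your proposal is correct and follows essentially the paper's route: freezing the hidden layers of grade $l+1$ makes the output-layer problem a convex least-squares problem over the linear space $\mathcal{Z}_{l+1}(\mathcal{M}_{l+1,2})$, which contains the zero function, so $\mathcal{A}_h g_{l+1}^*$ is the best approximation to the previous residual. Your normal-equation step sharpens this into the identity $\mathcal{L}(s_{l+1},\Theta_{l+1}^*)=\mathcal{L}(s_l,\Theta_l^*)-\|\mathcal{A}_h g_{l+1}^*\|_N^2$, which settles the equality case directly (the paper relies on an argument borrowed from earlier work), and it needs only last-layer stationarity, so it covers the paper's local-minimizer hypothesis where your first, global-minimality comparison does not apply; your zero extension of the increments on $\Gamma$ also makes explicit a bookkeeping step the paper leaves implicit.
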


\begin{proof}
For each grade $l\in\mathbb{N}_{L-1}$, the loss function, originally defined in \eqref{LOSS} with $\widetilde{\mathcal{N}}_D:=s_l$ and $\Theta:=\Theta_l$, can be expressed using the discrete Helmholtz operator \eqref{difference-operator} and the discrete semi-norm \eqref{semi-norm} as
\begin{equation*}\label{L}
    \mathcal{L}(s_l,\Theta_l)=\left\| f-\mathcal{A}_hs_l(\Theta_l;\cdot) \right\|_N^2,
\end{equation*}
We define the corresponding residual function by
\begin{equation*}\label{e-l}
    e_l(\Theta_l;\mathbf{x}):=f(\mathbf{x})-\mathcal{A}_hs_l(\Theta_l;\mathbf{x}), \quad \mathbf{x}\in\Omega.
\end{equation*}
Upon obtaining the optimal parameters
$\Theta_l^*$, we denote the trained network and its residual by
\begin{equation*}\label{s-l*+e-l*}
    s_l^*(\mathbf{x}):=s_l(\Theta_l^*;\mathbf{x}),\quad e_l^*(\mathbf{x}):=e_l(\Theta_l^*;\mathbf{x})=f(\mathbf{x})-\mathcal{A}_hs_l^*(\mathbf{x}),\quad \mathbf{x}\in\Omega.
\end{equation*}
Thus, the optimal loss simplifies to $\mathcal{L}(s_l,\Theta_l^*)=\|e_l^*\|_N^2$. To establish the desired inequality $\mathcal{L}(s_{l+1},\Theta_{l+1}^*) \le \mathcal{L}(s_l,\Theta_l^*)$, it suffices to prove $\|e_{l+1}^*\|_N \le \|e_l^*\|_N$. 

From the grade-wise construction \eqref{s-l}, it follows that $s_l^*(\mathbf{x})=\sum_{i\in\mathbb{N}_{l}}g_i^*(\mathbf{x})$ for all $\mathbf{x}\in\Omega$. Consequently, at grade $l+1$ we have
\begin{equation}\label{s-l+1}
    s_{l+1}(\Theta_{l+1};\mathbf{x}) = s_l^*(\mathbf{x})+g_{l+1}(\Theta_{l+1};\mathbf{x}), \quad \mathbf{x}\in\Omega.
\end{equation}
Substituting equation \eqref{s-l+1} into the definition of the residual $e_{l+1}$ yields 
\begin{equation}\label{e-l+1}
    e_{l+1}(\Theta_{l+1};\mathbf{x})=e_l^*(\mathbf{x})-\mathcal{A}_hg_{l+1}(\Theta_{l+1};\mathbf{x}), \quad \mathbf{x}\in\Omega,
\end{equation}
with trainable parameters $\Theta_{l+1}$.
Let $\Theta_{l+1}^*:=(\Theta_{l+1,1}^*,\Theta_{l+1,2}^*)$ denote the optimal parameters obtained by minimizing $\mathcal{L}(s_{l+1},\Theta_{l+1})$. We then obtain from \eqref{e-l+1} that 
\begin{equation}\label{e_l+1*}
    e_{l+1}^*(\mathbf{x})=e_l^*(\mathbf{x})-\mathcal{A}_hg_{l+1}^*(\mathbf{x}),\quad \mathbf{x}\in\Omega.
\end{equation}
Observe that
$(\Theta_{l+1,1}^*,\Theta_{l+1,2}^*)\in \mathcal{M}_{l+1,1}\times \mathcal{M}_{l+1,2}$ is a local minimizer of 
\begin{equation*}
    F(\Theta_{l+1,1},\Theta_{l+1,2}):=\|e_{l}^*-\mathcal{A}_hg_{l+1}((\Theta_{l+1,1},\Theta_{l+1,2});\cdot)\|_N^2.
\end{equation*}
Consequently, with $\Theta_{l+1,1}^*$ fixed, $\Theta_{l+1,2}^*\in\mathcal{M}_{l+1,2}$ is a local minimizer of  $F\left(\Theta_{l+1,1}^*,\Theta_{l+1,2}\right)$.
By the definition of the linear operator $\mathcal{Z}_{l+1}$, the minimization of $F(\Theta_{l+1,1}^*,\Theta_{l+1,2})$ over $\Theta_{l+1,2}\in\mathcal{M}_{l+1,2}$ is equivalent to solving the convex optimization problem
\begin{equation*}\label{convex-optimization}
\min_{\Theta_{l+1,2}\in\mathcal{M}_{l+1,2}}\|e_{l}^*-\mathcal{Z}_{l+1}(\Theta_{l+1,2})\|_N^2.
\end{equation*}
Thus, $\Theta_{l+1,2}^*\in\mathcal{M}_{l+1,2}$, being a local minimizer of this convex problem, is also a global minimizer. This implies that $\mathcal{Z}_{l+1}(\Theta_{l+1,2}^*)=\mathcal{A}_h g_{l+1}^*$ is the best approximation
to $e_l^*$ from the linear space $\mathcal{Z}_{l+1}(\mathcal{M}_{l+1,2})$ with respect to the semi-norm $\|\cdot\|_N$. Clearly, the zero function belongs to $\mathcal{Z}_{l+1}(\mathcal{M}_{l+1,2})$. Therefore, by the best-approximation property and equation \eqref{e_l+1*}, we conclude that
\begin{equation*}
    \|e_{l+1}^*\|_N = \|e_l^* - \mathcal{A}_hg_{l+1}^*\|_N \le \|e_l^* - 0\|_N \le \|e_l^*\|_N,
\end{equation*}
which proves the first assertion.

We next prove the second assertion regarding the equality condition. Clearly, the equality 
$$
\mathcal{L}(s_{l+1},\Theta_{l+1}^*)= \mathcal{L}(s_l,\Theta_l^*)
$$ 
holds if and only if  
\begin{equation}\label{equality-equivalent}
    \|e_l^* - \mathcal{A}_hg_{l+1}^*\|_N = \|e_l^* - 0\|_N.
\end{equation}
Recall that $\mathcal{A}_h g_{l+1}^*$ is the best approximation
to $e_l^*$ from the linear space $\mathcal{Z}_{l+1}(\mathcal{M}_{l+1,2})$ with respect to the semi-norm $\|\cdot\|_N$.
Thus, equality \eqref{equality-equivalent} holds if and only if  the zero function is a best approximation to $e_l^*$ from that subspace. It remainsto show that the latter is equivalent to $\|\mathcal{A}_hg_{l+1}^*\|_N=0$. By similar arguments used
in the proof of Lemma 7 in \cite{jiang2025adaptive}, if the zero function is a best approximation to $e_l^*$, then there holds that $\|\mathcal{A}_hg_{l+1}^*-0\|_N=0$, that is $\|\mathcal{A}_hg_{l+1}^*\|_N=0$. Conversely, if  $\|\mathcal{A}_hg_{l+1}^*\|_N=0$, then by the triangle inequality of the semi-norm, we obtain that  
\begin{equation*}
    \|e_{l}^*-0\|_N \leq \|e_l^* - \mathcal{A}_hg_{l+1}^*\|_N+\|\mathcal{A}_hg_{l+1}^*\|_N=\|e_l^* - \mathcal{A}_hg_{l+1}^*\|_N,
\end{equation*}
which yields that the zero function is a best approximation to $e_l^*$.
\end{proof}

The parameters $\Theta_l^*$ in Theorem~\ref{non-increasing} denote exact theoretical grade-wise minimizers satisfying the optimization assumptions of the proof. In numerical implementations, however, iterative optimizers such as Adam are terminated after a finite epoch budget. Consequently, Theorem~\ref{non-increasing} does not strictly guarantee a monotonic decrease in the empirically observed loss values. Nevertheless, practical loss monotonicity is maintained whenever the grade-$(l+1)$ residual subnetwork performs no worse than the trivial zero-correction state. Serving as the theoretical foundation for the adaptive algorithm presented in the next section, Theorem~\ref{non-increasing} is verified numerically in Section~\ref{2d} and illustrated in Figure~\ref{Optimal-loss}.

\section{Adaptive FD-MGDL Algorithm}
\label{Adaptive algorithm}

In this section, we propose an adaptive FD-MGDL algorithm for the numerical solution of the Helmholtz equation. This framework incrementally introduces training grades, utilizing an error-driven stopping criterion to automatically determine the optimal network depth. By leveraging the inter-grade error behavior inherent to MGDL, the algorithm adaptively balances approximation accuracy and computational efficiency without requiring the number of grades to be prescribed in advance.

\medskip
\noindent{\bf (1) Error-Driven Adaptation and Stopping Criterion}

As established in Theorem \ref{non-increasing}, the MGDL framework produces a computable loss sequence that is monotonically non-increasing. This property guarantees that additional grades will not deteriorate the approximation quality, providing a rigorous foundation for adaptive refinement.

In the FD-MGDL setting, the network is trained progressively: each new grade consists of a shallow network composed with the existing, frozen model. Only the parameters of the newly added grade are trained to approximate the current residual. We incorporate an automatic stopping criterion by monitoring the absolute difference between the training losses of two consecutive grades. If the difference falls below a prescribed tolerance $\epsilon$, the process terminates; otherwise, a new grade is introduced.

This mechanism offers a distinct advantage over conventional DNN training: instead of redesigning and retraining an entire deep architecture when performance is unsatisfactory, FD-MGDL simply appends a low-complexity correction layer.

\medskip
\noindent{\bf (2) Hybrid Architectural Design}

The architecture of the shallow sub-network at each grade is tailored to the physics of wave propagation and residual decay. To avoid the vanishing gradients, spectral bias, and optimization pathologies of deep end-to-end networks, we employ a physics-informed hybrid strategy:

\paragraph{\bf Grade-1: Capturing Coherent Global Waves.}
For the initial grade ($l=1$), we employ a shallow architecture comprising two hidden layers with sinusoidal activation functions. This choice is directly motivated by the physical nature of the Helmholtz equation, whose primary solution is dominated by coherent, highly oscillatory wavefield components. Standard piecewise-linear architectures (e.g., ReLU networks) suffer from well-documented spectral bias, preferentially learning low-frequency components while requiring extreme network depth to resolve high-frequency modes. In contrast, sinusoidal activations establish an explicit, Fourier-like representation mechanism where each periodic neuron encodes a distinct wave mode, enabling efficient multi-scale spatial superposition in the spirit of SIREN \cite{sitzmann2020implicit}.

A second critical advantage lies in differential operator evaluation. Because the Helmholtz operator requires second-order derivatives, $C^\infty$-smooth analytic functions are essential for capturing continuous wave derivatives without numerical artifacting. Sinusoidal activations yield smooth higher-order derivatives that inherently preserve wavefield oscillations. Conversely, standard piecewise-linear activations have weak second derivatives that vanish almost everywhere and exhibit singular delta-like behavior at activation interfaces, rendering them poorly suited for direct continuous PDE evaluation at the base grade.

\paragraph{\bf Subsequent Grades: Localized Residual Refinement via Composite Activations.}
For all subsequent grades ($l \ge 2$), we transition to shallow sub-networks consisting of a single hidden
layer with ReLU activations. This architectural shift is justified by four key theoretical and practical factors:
\begin{itemize}
    \item \textbf{Target Characteristics and Localized Wavelets:} After Grade-1 isolates the dominant wavefield, the learning target shifts to residual error fields. These residuals are less structured, less periodic, and more spatially localized \cite{fang2024addressing}. While pure global sines risk introducing non-local ringing artifacts when fitting localized errors, $\text{ReLU}(\sin(\cdot))$ acts as a windowed, wavelet-like function. It retains the local oscillatory capacity of Sine while utilizing the compact support of ReLU to restrict updates strictly to localized residual regions, establishing a complementary strategy: ``sine for global waves, windowed $ \text{ReLU}(\sin(\cdot))$ for localized residuals.''

    \item \textbf{Optimization Stability and Convex Reformulation:} Two-layer sub-networks enjoy proven convergence guarantees under mild conditions \cite{Wu_Feng_Li_Xu_2005} and admit exact convex reformulations \cite{Ergen2021convex,fang2025computational,pilanci2020neural}. These theoretical properties ensure that each sequential refinement step reduces to a well-conditioned optimization problem, bypassing the severe stiffness and non-convex landscape pathologies of deep architectures.

    \item \textbf{Asymptotic Consistency:} Within the MGDL framework, deploying shallow sub-networks at each grade mathematically guarantees vanishing approximation error in the limit \cite{ZhangShenXu}. This theoretical property confirms the asymptotic consistency of the multi-grade expansion as the number of grades increases.

    \item \textbf{Discrete Compatibility with Finite Difference Operators:} Although the inner Sine activation maintains smooth local wave behavior, the outer ReLU introduces derivative transition boundaries. Evaluating the residual via the discrete finite-difference matrix $A_h$ rather than continuous automatic differentiation avoids derivative singularities at activation interfaces, guaranteeing non-vanishing, stable discrete Laplacian approximations across all grid nodes.
\end{itemize}

In light of the preceding discussion, we propose an adaptive MGDL algorithm for the numerical solution of the Helmholtz Equation.
In passing, we comment that the MGDL approach for solving operator equations was originally introduced in~\cite{xu2025multi}. Subsequently, an adaptive MGDL framework was developed in~\cite{jiang2025adaptive} to solve highly oscillatory second-kind Fredholm integral equations.

\begin{algorithm}[H]
\caption{Adaptive FD-MGDL for the Helmholtz Equation}
\label{adaptive FD-MGDL algorithm}
\begin{algorithmic}[1]
\Require Helmholtz equation \eqref{Helmholtz-Dirichlet}, training points $\{\mathbf{x}_{\mathbf{j}}\}$, and error tolerance $\epsilon > 0$.
\Statex {\centering\bfseries Phase I: Grade-1 Initialization \par}
\State Initialize SNN $s_1(\Theta_1;\mathbf{x})$ with parameters $\Theta_1$.\State Solve $\Theta_1^* = \arg\min_{\Theta_1} \mathcal{L}(s_1, \Theta_1)$ and set $\mathcal{L}_1^* = \mathcal{L}(s_1, \Theta_1^*)$.
\State Set $l = 1$ and $\mathcal{L}_0^* = \infty$ (or a sufficiently large value).
\Statex {\centering\bfseries Phase II: Adaptive Residual Refinement \par}
\While {$|\mathcal{L}_l^* - \mathcal{L}_{l-1}^*| > \epsilon$}
    \State $l \gets l + 1$.
    \State \textbf{Freeze} optimal parameters $\{\Theta_1^*, \dots, \Theta_{l-1}^*\}$.
    \State Initialize $l$-th grade SNN  $s_l(\Theta_l; \mathbf{x})$ with parameters $\Theta_l$.
    \State Construct cumulative model $s_l(\Theta_l; \mathbf{x})$ per \eqref{s-l}.
    \State Solve $\Theta_l^* = \arg\min_{\Theta_l} \mathcal{L}(s_l, \Theta_l)$ and set $\mathcal{L}_l^* = \mathcal{L}(s_l, \Theta_l^*)$.
\EndWhile
\State \textbf{Return} final numerical solution $s_L^*(\mathbf{x}) = \sum_{i=1}^l g_i^*(\mathbf{x})$.
\end{algorithmic}
\end{algorithm}

The stopping criterion relies on the incremental loss decay $\Delta \mathcal{L}_l = \mathcal{L}_{l-1}^* - \mathcal{L}_l^*$. By Theorem \ref{non-increasing}, the sequence of theoretical loss values $\{\mathcal{L}_l^*\}_{l=1}^L$ is monotonically non-increasing. In practice, training terminates once $\Delta \mathcal{L}_l \le \epsilon$, signaling that further grade additions yield diminishing marginal gains in residual correction. 

During the training of grade $l$, the optimized parameters of all preceding grades, $\{\Theta_1^{*},\ldots,\Theta_{l-1}^{*}\}$, are frozen, and only the parameters $\Theta_l$ are updated. Consequently, gradient computation and parameter updates are restricted to the current grade, maintaining a constant active optimization footprint ($\mathcal{O}(1)$ relative to total network depth) and completely eliminating inter-grade parameter coupling.

Below we reformulate the nonconvex optimization problem arising in the FD-MGDL framework with two-layer ReLU networks at each grade as a sequence of convex optimization subproblems, following the approach of \cite{fang2025computational}.

For a grade $l\ge 2$, we model the correction term using a two-layer ReLU network $\mathcal{N}_{2_l}:\mathbb{R}^{d_{l-1,D_{l-1}-1}}\to\mathbb{R}$ with $m_l$ hidden neurons, defined as:
\begin{equation*}    
    \mathcal{N}_{2_{l}}(\mathbf{t}_l)=\sum_{j=1}^{m_l}\alpha_{lj}\left(\mathbf{t}_l^{\top}\mathbf{w}_{lj}\right)_+,
\end{equation*}
where $\mathbf{t}_l\in\mathbb{R}^{d_{l-1,D_{l-1}-1}}$ represents the input feature vector for grade $l$, while $\mathbf{w}_{lj}\in \mathbb{R}^{d_{l-1,D_{l-1}-1}}$ and $\alpha_{lj}\in\mathbb{R}$ denote the hidden-layer weights and output layer coefficients, respectively. 

{\bf Data and Target Formulation:}
To organize the training points $\mathbf{x}_{\mathbf{j}}$ ($\mathbf{j} \in \mathbb{N}_m^d$), we fix an ordering via a bijection $\phi: \mathbb{N}_{m^d} \to \mathbb{N}_m^d$. We define the feature data matrix $\mathbf{X}_l \in \mathbb{R}^{m^d \times d_{l-1,D_{l-1}-1}}$ by collecting the feature representations from the preceding grade:
\begin{equation*}
\mathbf{X}_l := \left[ \mathbf{h}_{l-1}(\mathbf{x}_{\phi(1)}), \mathbf{h}_{l-1}(\mathbf{x}_{\phi(2)}), \ldots, \mathbf{h}_{l-1}(\mathbf{x}_{\phi(m^d)}) \right]^{\top}.
\end{equation*}
The training target for grade $l$ is the optimal residual vector $\mathbf{e}_{l-1}^*$ inherited from grade $l-1$:
\begin{equation*}
\mathbf{e}_{l-1}^* := \left[ e_{l-1}^*(\mathbf{x}_{\phi(1)}), e_{l-1}^*(\mathbf{x}_{\phi(2)}), \ldots, e_{l-1}^*(\mathbf{x}_{\phi(m^d)}) \right]^{\top} \in \mathbb{R}^{m^d}.
\end{equation*}

{\bf The Optimization Problem:}
Training the network at grade $l$ thus involves solving the following non-convex optimization problem:
\begin{equation}\label{nonconvex}
    \min_{\{\mathbf{w}_{lj},\alpha_{lj}\}_{j=1}^{m_l}} \left\| \mathbf{e}_{l-1}^*-\mathcal{A}_h\sum_{j=1}^{m_l}\left(\mathbf{X}_l\mathbf{w}_{lj}\right)_+\alpha_{lj}  \right\|_2^2,
\end{equation}
where the discrete Helmholtz operator $\mathcal{A}_h$ is applied component-wise to the resulting vector. This structure reveals that while the objective is non-convex in the weights $\mathbf{w}_{lj}$, it maintains a specific multilinear form that facilitates the convex reformulation discussed in the subsequence.

We now define the equivalent convex optimization problem. For any weight vector $\mathbf{w}\in \mathbb{R}^{d_{l-1,D_{l-1}-1}}$,  let $\mathrm{Diag}(1[\mathbf{X}_l\mathbf{w}\ge0])$ be a diagonal matrix, where $1[\cdot]$ is the element‑wise indicator function:
\begin{equation*}
    1[\mathbf{X}_l \mathbf{w} \ge \mathbf{0}] = \Bigl( 1\bigl[\mathbf{h}_{l-1}(\mathbf{x}_{\phi(1)})^\top \mathbf{w} \ge 0\bigr], \dots,1\bigl[\mathbf{h}_{l-1}(\mathbf{x}_{\phi(m^d)})^\top \mathbf{w} \ge 0\bigr] \Bigr)^\top \in \{0,1\}^{m^d}.
\end{equation*}
Let $\mathbf{D}_{l1},\dots,\mathbf{D}_{lP_l}$ be the set of all $P_l$ distinct diagonal matrices generated as $\mathbf{w}$ varies over $\mathbb{R}^{d_{l-1,D_{l-1}-1}}$. Each matrix $\mathbf{D}_{li}$ corresponds to a unique {\it convex polyhedral cone} $K_{li}$, defined as:
\begin{equation}\label{K_li}
    K_{li}:=\left\{\mathbf{w} \in \mathbb{R}^{d_{l-1,D_{l-1}-1}} :(\mathbf{X}_l\mathbf{w})_k\geq 0, \mbox{if } (\mathbf{D}_{li})_{kk}=1;(\mathbf{X}_l\mathbf{w})_k< 0, \mbox{if }(\mathbf{D}_{li})_{kk}=0,k\in\mathbb{N}_{m^d}\right\}.
\end{equation}
Geometrically,  the collection of hyperplanes $\mathbf{h}_{l-1}(\mathbf{x}_{\phi(k)})^{\top}\mathbf{w} = 0$ partition the weight space
$\mathbb{R}^{d_{l-1,D_{l-1}-1}}$ into these $P_l$ cones. For a given feasible solution $\left\{ \mathbf{w}_{lj},\alpha_{lj} \right\}_{j=1}^{m_l}$ of problem \eqref{nonconvex}, we can group the neurons according to which cone their weight vector lies in. For each $i\in\mathbb{N}_{P_l}$, we set \begin{equation*}
   S_{li}:=\left\{ j\in\mathbb{N}_{m_l}: \mathbf{w}_{lj}\in K_{li} \right\}.
\end{equation*}
The sets $S_{li}$, $i\in\mathbb{N}_{P_l}$ form a partition of the index set $\mathbb{N}_{m_l}$, that is,
\begin{equation*}
    \bigcup_{i=1}^{P_l} S_{li} = \mathbb{N}_{m_l} \quad\text{and}\quad 
S_{li} \cap S_{li'} = \emptyset \ \  \text{for}\  i \neq i'.
\end{equation*}
Therefore, the network output can be rewritten as
\begin{equation}\label{relu-sum}
\mathcal{A}_h\sum_{j=1}^{m_l}\left(\mathbf{X}_l\mathbf{w}_{lj}\right)_+\alpha_{lj} =\mathcal{A}_h\sum_{i=1}^{P_l}\sum_{j\in S_{li}}\left(\mathbf{X}_l\mathbf{w}_{lj}\right)_+\alpha_{lj}.
\end{equation}
By definition \eqref{K_li} of $K_{li}$, we have for each $j\in S_{li}$ and each $k\in\mathbb{N}_{m^d}$ that if $(\mathbf{D}_{li})_{kk}=1$, then $(\mathbf{X}_l\mathbf{w}_{lj})_k\geq 0$, 
and if $(\mathbf{D}_{li})_{kk}=0$, then $(\mathbf{X}_l\mathbf{w}_{lj})_k< 0$. These inequalities immediately yield that 
\begin{equation}\label{relu}
    (\mathbf{X}_l\mathbf{w}_{lj})_+=\mathbf{D}_{li}\mathbf{X}_l\mathbf{w}_{lj}, \ \ j\in S_{li}.
\end{equation}
Substituting equation \eqref{relu} into the right hand side of equation \eqref{relu-sum} leads to
\begin{equation}\label{relu-sum1}
\mathcal{A}_h\sum_{j=1}^{m_l}\left(\mathbf{X}_l\mathbf{w}_{lj}\right)_+\alpha_{lj} =\mathcal{A}_h\sum_{i=1}^{P_l}\mathbf{D}_{li}\mathbf{X}_l\left(\sum_{j\in S_{li}}\alpha_{lj}\mathbf{w}_{lj}\right).
\end{equation}
For each $i\in\mathbb{N}_{P_l}$, we introduce auxiliary vectors that aggregate the positive and negative contributions within each cone:
\begin{equation*}
    \mathbf{v}_{li} := \sum_{j\in S_{li},\,\alpha_{lj}\ge 0}\alpha_{lj}\mathbf{w}_{lj}, \quad
    \mathbf{u}_{li} := -\sum_{j\in S_{li},\,\alpha_{lj}< 0}\alpha_{lj}\mathbf{w}_{lj}.
\end{equation*}
Substituting 
\begin{equation*}
\sum_{j\in S_{li}}\alpha_{lj} \mathbf{w}_{lj}= \mathbf{v}_{li} - \mathbf{u}_{li}
\end{equation*}
into equation \eqref{relu-sum1} yields that 
\begin{equation*}
\mathcal{A}_h\sum_{j=1}^{m_l}\left(\mathbf{X}_l\mathbf{w}_{lj}\right)_+\alpha_{lj}=\mathcal{A}_h\sum_{i=1}^{P_l}\mathbf{D}_{li}\mathbf{X}_l(\mathbf{v}_{li}-\mathbf{u}_{li}).
\end{equation*} 
Since $\mathbf{v}_{li}$, $\mathbf{u}_{li}$ are non-negative linear combinations of weight vectors within $K_{li}$, they inherit the cone membership property. Specifically, they satisfy the linear constraints 
\begin{equation*}
    (2\mathbf{D}_{li}-\mathbf{I})\mathbf{X}_l\mathbf{v}_{li}\geq 0, \quad (2\mathbf{D}_{li}-\mathbf{I})\mathbf{X}_l\mathbf{u}_{li}\geq 0.
\end{equation*}
This transformation shows that every feasible solution of the non-convex problem \eqref{nonconvex} can be mapped to a feasible solution of a convex optimization problem with exactly the same objective value. This motivates the definition of the equivalent convex optimization problem:
\begin{equation}\label{convexprogram}
    \begin{aligned}
        &\min_{\{ \mathbf{v}_{li},\mathbf{u}_{li} \}_{i=1}^{P_l}} \left\| \mathbf{e}_{l-1}^*-\mathcal{A}_h\sum_{i=1}^{P_l}\mathbf{D}_{li}\mathbf{X}_l(\mathbf{v}_{li}-\mathbf{u}_{li})\right\|_2^2\\
        &\mathrm{subject\ to}\  (2\mathbf{D}_{li}-\mathbf{I})\mathbf{X}_l\mathbf{v}_{li}\geq0,\ (2\mathbf{D}_{li}-\mathbf{I})\mathbf{X}_l\mathbf{u}_{li}\geq0,\  i\in\mathbb{N}_{P_l}.
    \end{aligned}
\end{equation}
The linear constraints in \eqref{convexprogram} ensure that the vectors $\mathbf{v}_{li}$ and $\mathbf{u}_{li}$ remain within their respective cones $K_{li}$, effectively transforming the original non-convex search for weights into a structured search across a fixed arrangement of linear regions. Since every non-convex solution has a corresponding convex counterpart with the same objective, taking the minimum over the convex set provides a global lower bound.

The following theorem characterizes the equivalence between the non-convex optimization problem \eqref{nonconvex} and the convex reformulation \eqref{convexprogram}. Specifically, it establishes that the convex problem serves as a global lower bound for the non-convex objective. Furthermore, when the hidden layer width $m_l$ is sufficiently large, the duality gap vanishes—the optimal values of both problems coincide, and the global minimizers of the neural network can be recovered explicitly from the solution of the convex program.

\begin{theorem}[Convex–Nonconvex Equivalence]\label{nonconvex-convex}
Let $P_{\mathrm{nc}}^{*}$ and $P_{\mathrm{c}}^{*}$ denote the optimal values of the non-convex training problem \eqref{nonconvex} and the convex reformulation \eqref{convexprogram}, respectively. The following properties hold:

\begin{enumerate}
    \item \textbf{Lower Bound:} For any network width $m_l \ge 1$, the convex problem provides a global lower bound such that $P_{\mathrm{nc}}^{*} \ge P_{\mathrm{c}}^{*}$.
    
    \item \textbf{Global Optimality and Construction:} Let $\{(\mathbf{v}_{li}^*,\mathbf{u}_{li}^*)\}_{i=1}^{P_l}$ be an optimal solution to the convex program \eqref{convexprogram}. 
    If the network width satisfies 
    \begin{equation*}
        m_l \ge m_l^*:=\sum_{i=1}^{P_l}(1[\mathbf{v}_{li}^*\ne0]+1[\mathbf{u}_{li}^*\ne0]),
    \end{equation*}
    then the duality gap vanishes ($P_{\mathrm{nc}}^{*} = P_{\mathrm{c}}^{*}$). 
    
    Furthermore, a globally optimal solution $\{\mathbf{w}_{lj}^*, \alpha_{lj}^*\}_{j=1}^{m_l}$ for the non-convex problem \eqref{nonconvex} is explicitly constructed as follows:
    \begin{itemize}
        \item For each $i \in \{1, \dots, P_l\}$ where $\mathbf{v}_{li}^* \ne \mathbf{0}$:
        \begin{equation*}
        \left( \mathbf{w}_{lj_{1,i}}^*, \alpha_{lj_{1,i}}^* \right) = \left( \frac{\mathbf{v}_{li}^*}{\|\mathbf{v}_{li}^*\|}, \|\mathbf{v}_{li}^*\| \right).
        \end{equation*}
        \item For each $i \in \{1, \dots, P_l\}$ where $\mathbf{u}_{li}^* \ne \mathbf{0}$:
        \begin{equation*}
        \left( \mathbf{w}_{lj_{2,i}}^*, \alpha_{lj_{2,i}}^* \right) = \left( \frac{\mathbf{u}_{li}^*}{\|\mathbf{u}_{li}^*\|}, -\|\mathbf{u}_{li}^*\| \right).
        \end{equation*}
    \end{itemize}
    Here, $\{j_{1,i}\}$ and $\{j_{2,i}\}$ are distinct neuron indices. The remaining $m_l - m_l^*$ neurons are set to $(\mathbf{0}, 0)$.
\end{enumerate}
\end{theorem}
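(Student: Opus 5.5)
The proof has two parts. Part 1 is the feasible-point mapping already sketched before the statement. Part 2 is the reverse construction: build a neural network from a convex optimum and show it reproduces the convex objective value. Throughout, the objective is a fixed function $\Phi(\mathbf{z}):=\|\mathbf{e}_{l-1}^*-\mathcal{A}_h\mathbf{z}\|_2^2$ of the pre-operator vector $\mathbf{z}\in\mathbb{R}^{m^d}$. Here $\mathcal{A}_h$ acts as a fixed linear map on grid vectors, with the boundary contributions already absorbed into $\mathbf{e}_{l-1}^*$. Hence both problems reduce to comparing the sets of attainable vectors $\mathbf{z}$.

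\textbf{Lower bound.} Take any feasible $\{\mathbf{w}_{lj},\alpha_{lj}\}_{j=1}^{m_l}$ for \eqref{nonconvex}. Partition the neurons by the cones $K_{li}$ into the sets $S_{li}$. Using \eqref{relu} and \eqref{relu-sum1}, form $\mathbf{v}_{li},\mathbf{u}_{li}$ as defined above. Each $K_{li}$ is a cone, so its closure is convex. This closure is exactly $\{\mathbf{w}:(2\mathbf{D}_{li}-\mathbf{I})\mathbf{X}_l\mathbf{w}\ge0\}$, and it contains all nonnegative combinations of its members, including $\mathbf{0}$ for empty $S_{li}$. Hence $(\mathbf{v}_{li},\mathbf{u}_{li})$ is feasible for \eqref{convexprogram}, and it gives the same $\mathbf{z}$, so the objective values coincide. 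Taking the infimum over all nonconvex feasible points gives $P_{\mathrm{nc}}^*\ge P_{\mathrm{c}}^*$.

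\textbf{Reverse construction.} Let $\{(\mathbf{v}_{li}^*,\mathbf{u}_{li}^*)\}$ be convex-optimal and define the neurons as in the statement, padding with zero neurons. This uses $m_l^*\le m_l$ neurons, and it is where the width assumption enters. For a neuron $\mathbf{w}=\mathbf{v}_{li}^*/\|\mathbf{v}_{li}^*\|$ with $\alpha=\|\mathbf{v}_{li}^*\|$, I must show $(\mathbf{X}_l\mathbf{w})_+\alpha=\mathbf{D}_{li}\mathbf{X}_l\mathbf{v}_{li}^*$. The constraint gives $(\mathbf{X}_l\mathbf{v}_{li}^*)_k\ge0$ where $(\mathbf{D}_{li})_{kk}=1$ and $(\mathbf{X}_l\mathbf{v}_{li}^*)_k\le0$ where $(\mathbf{D}_{li})_{kk}=0$. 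In the second case ReLU returns $0$, which agrees with $\mathbf{D}_{li}$ zeroing that entry, even when the entry equals $0$. In the first case ReLU returns the entry itself. Positive homogeneity of ReLU then gives the identity. The negative neurons built from $\mathbf{u}_{li}^*$ are handled the same way and contribute $-\mathbf{D}_{li}\mathbf{X}_l\mathbf{u}_{li}^*$, and zero neurons contribute nothing. Summing, the network produces exactly $\mathbf{z}^*=\sum_i\mathbf{D}_{li}\mathbf{X}_l(\mathbf{v}_{li}^*-\mathbf{u}_{li}^*)$. Its nonconvex objective therefore equals $P_{\mathrm{c}}^*$, so $P_{\mathrm{nc}}^*\le P_{\mathrm{c}}^*$. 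Combined with Part 1, this gives equality, and the constructed network attains $P_{\mathrm{nc}}^*$, so it is globally optimal.

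\textbf{Main obstacle.} The delicate point is the boundary case, which arises in two places. First, the cones $K_{li}$ in \eqref{K_li} use strict inequalities while the convex constraints are closed. I must check that a sum of points in $K_{li}$ need not lie in $K_{li}$ itself, for example when it is zero, but always lies in the closed constraint set; that is all Part 1 requires. Second, Part 2 must not require the normalized weights to lie in the open cone $K_{li}$: the ReLU identity holds on the closed set, as argued above. A further check is that $P_{\mathrm{c}}^*$ is attained, so that a convex optimum exists. The feasible $\mathbf{z}$-set is a finite sum of linear images of polyhedral cones, hence a closed polyhedral cone. Minimizing a convex quadratic over it therefore attains its minimum, and some optimal $(\mathbf{v}^*,\mathbf{u}^*)$ exists.
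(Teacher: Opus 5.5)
Your proposal is correct and follows the paper's proof. Part 1 uses the same cone-partition mapping $\{\mathbf{w}_{lj},\alpha_{lj}\}\mapsto\{\mathbf{v}_{li},\mathbf{u}_{li}\}$ to get the lower bound, and Part 2 uses the same normalized-neuron construction with the ReLU identity $(\mathbf{X}_l\mathbf{w})_+=\mathbf{D}_{li}\mathbf{X}_l\mathbf{w}$ under the closed constraint to get the reverse inequality. Your added care tightens points the paper passes over: you use the closed constraint set rather than the strict inequalities in $K_{li}$, you handle zero aggregates, and you check that $P_{\mathrm{c}}^*$ is attained. One small wording fix: the convexity of the closure of $K_{li}$ follows from $K_{li}$ being an intersection of half-spaces, not merely from its being a cone.
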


\begin{proof}
We first prove statement (1). Let $\left\{ \mathbf{w}_{lj},\alpha_{lj} \right\}_{j=1}^{m_l}$ be any feasible solution to the non-convex \eqref{nonconvex}. As shown previously, we can associate with this solution a corresponding feasible solution $\{\mathbf{v}_{li},\mathbf{u}_{li}\}_{i=1}^{P_l}$ to the convex problem \eqref{convexprogram} that satisfies
\begin{equation*}
    \left\| \mathbf{e}_{l-1}^*-\mathcal{A}_h\sum_{j=1}^{m_l}\left(\mathbf{X}_l\mathbf{w}_{lj}\right)_+\alpha_{lj}  \right\|_2^2=\left\| \mathbf{e}_{l-1}^*-\mathcal{A}_h\sum_{i=1}^{P_l}\mathbf{D}_{li}\mathbf{X}_l(\mathbf{v}_{li}-\mathbf{u}_{li})\right\|_2^2.
\end{equation*}
That is, the feasible solution $\{\mathbf{v}_{li},\mathbf{u}_{li}\}_{i=1}^{P_l}$ of \eqref{convexprogram} achieves exactly the same objective value as the original non-convex solution $\left\{ \mathbf{w}_{lj},\alpha_{lj} \right\}_{j=1}^{m_l}$ of \eqref{nonconvex}. Since every feasible solution of \eqref{nonconvex} can be mapped to a feasible solution of \eqref{convexprogram} with the same objective value, taking minima over both problems yields that 
$P_{\mathrm{nc}}^*\ge P_{\mathrm{c}}^*.$

We next verify statement (2). Suppose that $\{\mathbf{v}_{li}^*,\mathbf{u}_{li}^*\}_{i=1}^{P_l}$ is an optimal solution of problem 
\eqref{convexprogram} and define
\begin{equation*}
    m_l^*:=\sum_{i=1}^{P_l}(1[\mathbf{v}_{li}^*\ne0]+1[\mathbf{u}_{li}^*\ne0]).
\end{equation*}
Using this  optimal solution, we construct a feasible solution for the non‑convex problem \eqref{nonconvex} as follows. For each $i\in\mathbb{N}_{P_l}$, 
\begin{itemize}
    \item if $\mathbf{v}_{li}^*\neq 0$, we introduce a neuron with parameters 
    \begin{equation*}
      \mathbf{w}_{lj_{1,i}}^*=\frac{\mathbf{v}_{li}^*}{\|\mathbf{v}_{li}^*\|},\ \ \alpha_{lj_{1,i}}^*=\|\mathbf{v}_{li}^*\|;
    \end{equation*}
    \item if $\mathbf{u}_{li}^*\neq 0$, we introduce a neuron with parameters
     \begin{equation*}
      \mathbf{w}_{lj_{2,i}}^*=\frac{\mathbf{u}_{li}^*}{\|\mathbf{u}_{li}^*\|},\ \ \alpha_{lj_{2,i}}^*=-\|\mathbf{u}_{li}^*\|.
    \end{equation*}
\end{itemize}
The indices $j_{1,i}$ and $j_{2,i}$ are taken distinct for different pairs, and the remaining $m_l-m_l^*$ neurons are set to $(\mathbf{0}, 0)$. 
Because $\{\mathbf{v}_{li}^*,\mathbf{u}_{li}^*\}_{i=1}^{P_l}$ satisfies the constraint condition of \eqref{convexprogram}, we have for each $i\in\mathbb{N}_{P_l}$ that
\begin{equation}\label{relu-i}
    (\mathbf{X}_l\mathbf{w}_{lj_{1,i}}^*)_+=\mathbf{D}_{li}\mathbf{X}_l\mathbf{w}_{lj_{1,i}}^*,\ \ (\mathbf{X}_l\mathbf{w}_{lj_{2,i}}^*)_+=\mathbf{D}_{li}\mathbf{X}_l\mathbf{w}_{lj_{2,i}}^*.
\end{equation} 
Denote the constructed parameters by $\{(\mathbf{w}^*_{lj},\alpha^*_{lj})\}_{j=1}^{m_l}$. Then the output of the two‑layer network equals
\begin{equation*}
    \sum_{j=1}^{m_l}\left(\mathbf{X}_l\mathbf{w}_{lj}^*\right)_+\alpha_{lj}^*=\sum_{i=1}^{P_l}(\mathbf{X}_l\mathbf{w}_{lj_{1,i}}^*)_+\alpha_{lj_{1,i}}^*+(\mathbf{X}_l\mathbf{w}_{lj_{2,i}}^*)_+\alpha_{lj_{2,i}}^*.
\end{equation*} 
Substituting \eqref{relu-i} into the above equation yields that 
\begin{equation*}
    \sum_{j=1}^{m_l}\left(\mathbf{X}_l\mathbf{w}_{lj}^*\right)_+\alpha_{lj}^*=\sum_{i=1}^{P_l}\left(\mathbf{D}_{li}\mathbf{X}_l\mathbf{w}_{lj_{1,i}}^*\alpha_{lj_{1,i}}^*+\mathbf{D}_{li}\mathbf{X}_l\mathbf{w}_{lj_{2,i}}^*\alpha_{lj_{2,i}}^*\right).
\end{equation*} 
Using the definitions of $\mathbf{w}_{lj_{1,i}}^*,\alpha_{lj_{1,i}}^*$ and 
$\mathbf{w}_{lj_{2,i}}^*,\alpha_{lj_{2,i}}^*$, we obtain that
\begin{equation*}
    \sum_{j=1}^{m_l}\left(\mathbf{X}_l\mathbf{w}_{lj}^*\right)_+\alpha_{lj}^*=\sum_{i=1}^{P_l}\mathbf{D}_{li}\mathbf{X}_l(\mathbf{v}_{li}^*-\mathbf{u}_{li}^*).
\end{equation*}  
Therefore, $\{(\mathbf{w}^*_{lj},\alpha^*_{lj})\}_{j=1}^{m_l}$ is a feasible solution to \eqref{nonconvex}, whose objective value equals the optimal value $P_\mathrm{c}^*$. Consequently,  $P_{\mathrm{nc}}^*\le P_{\mathrm{c}}^*$. Combined with $P_{\mathrm{nc}}^* \ge P_{\mathrm{c}}^*$ from statement (1), we get $P_{\mathrm{nc}}^* = P_{\mathrm{c}}^*$.
\end{proof}

The established convex--nonconvex equivalence ensures that, with sufficient network width, the non-convex training of a two-layer ReLU network shares the same optimal value as a structured convex program. Within the FD-MGDL framework, this insight serves two primary roles:

\begin{itemize}
    \item \textbf{Algorithmic Flexibility:} It provides an alternative path to global optimality via convex solvers, bypassing the initialization sensitivity of non-convex optimization.
    \item \textbf{Structural Certification:} More importantly, it certifies the \textbf{structural benignity} of the non-convex landscape. As established in \cite{pilanci2020neural}, the global optima of the convex program correspond to the stationary points of the non-convex objective. This suggests that backpropagation acts as an effective heuristic for the underlying convex structure, with Stochastic Gradient Descent (SGD) consistently converging to the convex optimum regardless of initialization.
\end{itemize}

Consequently, this equivalence justifies using standard non-convex training at each grade, ensuring that the refinement of the Helmholtz solution remains a principled and stable task.

\section{Implementation Details and Structural Ablation Study}\label{details}

This section details the implementation of Algorithm \ref{adaptive FD-MGDL algorithm}, provides its justification, and presents a structural ablation study investigating the influence of network depth on the multi-grade learning process.

\subsection{Implementation Details}

{\bf Optimization and Initialization.} For each grade $l$, the optimization problem is solved using the Adam optimizer with Xavier initialization. We apply an exponential learning rate decay schedule where the learning rate at epoch $k$ is:
\begin{equation*}t_k := t_{\max} \exp(-\gamma k), \quad \gamma := \frac{\ln(t_{\max}/t_{\min})}{K},\end{equation*}
where $K$ is the total number of epochs, and $t_{\max}$ and $t_{\min}$ are the prescribed maximum and minimum learning rates. Training terminates if $K$ is reached or if the change in training loss between iterations falls below a tolerance $\epsilon$.

\textbf{Evaluation Metrics.} To assess predictive accuracy, we employ the Relative Squared Error (RSE). For a network $\mathcal{N}$ predicting $\hat{\mathbf{y}}_{\ell} := \mathcal{N}(\mathbf{x}_{\ell})$ given inputs $\mathbf{x}_\ell$ ($\ell \in \mathbb{N}_{N}$), the RSE is defined as:
\begin{equation*}
\mathrm{RSE} := \frac{\sum_{\ell \in \mathbb{N}_{N}} \| \hat{\mathbf{y}}_{\ell} - \mathbf{y}_{\ell} \|_2^2}{\sum_{\ell \in \mathbb{N}_{N}} \| \mathbf{y}_{\ell} \|_2^2}.
\end{equation*}
We denote the errors on the training and testing sets as $\mathrm{TrRSE}$ and $\mathrm{TeRSE}$, respectively. 

In our numerical experiments, an $\mathrm{RSE} \approx 1$ indicates that the model predictions have collapsed to near-zero ($\hat{\mathbf{y}}_{\ell} \approx \mathbf{0}$ for all $\ell$). In this trivial regime, the numerator $\sum_{\ell} \| \hat{\mathbf{y}}_{\ell} - \mathbf{y}_{\ell} \|_2^2$ simplifies to the signal norm $\sum_{\ell} \| \mathbf{y}_{\ell} \|_2^2$, driving the ratio to unity. Such values reflect a failure of the model to learn any meaningful underlying wave structures, effectively outputting a null field.

Computational efficiency is measured via the Accumulated Computational Time (AC time):
\begin{equation*}
    \mathrm{AC\ time} := 
    \begin{cases}
        \sum_{j=1}^i t_j, & \text{for MGDL (up to grade } i\text{)}, \\
        t_{\mathrm{full}}, & \text{for standard baseline methods},
    \end{cases}
\end{equation*}
where $t_j$ represents the training duration for the $j$-th grade in MGDL, and $t_{\mathrm{full}}$ denotes the total training time for competing baselines.

\subsection{Theoretical Motivation: Convexity and Stability}

While the FD-MGDL framework utilizes the Adam optimizer to solve non-convex loss functions for efficiency, its performance is theoretically grounded in the convex subproblem formulation described in \eqref{convexprogram}. This bridge between non-convex optimization and convex theory is critical for two primary reasons:
\begin{itemize}
    \item {\bf Computational Scalability:} Directly solving the convex subproblems via Quadratic Programming (QP) or specialized convex solvers is computationally prohibitive for large-scale 2D and 3D Helmholtz problems. This is primarily due to the extreme memory overhead and high dimensionality inherent in the activation patterns of large-scale systems. While the direct translation of Theorem \ref{nonconvex-convex} into specialized solvers remains a subject for further investigation, the theorem currently serves to justify why gradient-based methods can effectively navigate the search space without the need for prohibitive convex solvers.
    \item{\bf Theoretical Guarantee:} Theorem \ref{nonconvex-convex} establishes that with sufficient network width, the non-convex optimization landscape possesses a benign structure—it contains no ``bad'' local minima.
\end{itemize}
Consequently, gradient-based methods like Adam act as implicit solvers for the underlying convex program. This theoretical consistency ensures stable convergence even at high wavenumbers—conditions under which traditional single-grade methods (such as FD-SGDL) typically stagnate due to optimization stiffness. By decomposing the problem into multiple shallow grades, FD-MGDL leverages this benign landscape to refine the solution progressively and reliably.

\subsection{Adaptive Nature of the FD-MGDL Framework}
The adaptive nature of FD-MGDL is governed by the tolerance threshold $\epsilon$, which triggers the transition between successive grades. The robustness of this mechanism is supported by the monotonicity of the loss function (Theorem \ref{non-increasing}).

{\bf Theoretical Robustness:} Theorem \ref{non-increasing} guarantees that the training loss $\mathcal{L}$ is non-increasing as the grade level $l$ increases. This ensures that adding a new grade serves as a refinement step that either reduces the residual or maintains the current error level. Thus, the threshold $\epsilon$ acts as a criterion for identifying diminishing returns rather than a volatile hyperparameter that might risk instability.

In practice, the optimal $\epsilon$ is coupled with the wavenumber $\kappa$ and domain complexity. We adopt a monitoring strategy based on the TrRSE: if the error plateaus, $\epsilon$ is increased to terminate the grade-switching early and save computation; if a downward trend persists, $\epsilon$ is decreased to introduce new grades and capture high-frequency features more accurately.

{\bf Learning Rate Selection and Stability:} Because parameters from previous grades are frozen, each grade constitutes an independent optimization problem. We perform a grid search over learning rates $\{10^{-1}, \dots, 10^{-4}\}$ to identify optimal $t_{\max}$ and $t_{\min}$.

Interestingly, as the grade index $l$ increases, the magnitude of the residual $\|e_{l-1}^*\|_N$ typically decreases. This indicates that subsequent grades are fine-tuning the solution on a progressively "smaller signal," which provides a protective, stabilizing effect. This built-in stability ensures the final solution remains robust even with minor variations in the learning rate.

\subsection{Ablation Study: Grade-Wise Architecture and Activation Functions}

To assess the effectiveness of the architectural choices in the proposed adaptive MGDL algorithm, we conduct two controlled ablation studies concerning the grade-wise allocation of network depth and the activation-function configuration. Specifically, we consider the following two-dimensional Helmholtz problem:
\begin{equation}\label{2D-Helmholtz-equation}
    \left\{\begin{aligned}
        &\frac{\partial^2u}{\partial x_1^2}+\frac{\partial^2u}{\partial x_2^2}+\kappa^2 u(x_1,x_2)=0, &&(x_1,x_2)\in \Omega,\\
        &u(x_1,x_2) = 0, && (x_1,x_2)\in\Gamma_1:=\{0\}\times(0,1), \\
        &u(x_1,x_2) = 0, &&(x_1,x_2)\in\Gamma_2:=(0,1)\times\{0\},\\
        &u(x_1,x_2) = \sin\left(\frac{\sqrt{2}}{2}\kappa\right)\sin\left(\frac{\sqrt{2}}{2}\kappa x_2\right),  && (x_1,x_2)\in\Gamma_3:=\{1\}\times(0,1), \\
        &u(x_1,x_2) = \sin\left(\frac{\sqrt{2}}{2}\kappa x_1\right)\sin\left(\frac{\sqrt{2}}{2}\kappa\right),  &&(x_1,x_2)\in\Gamma_4:=(0,1)\times\{1\},
    \end{aligned}\right.
\end{equation}
where $\kappa=50$. 

The ablation study consists of two parts. First, we investigate how the allocation of network depth across grades affects the accuracy, convergence behavior, and computational efficiency of MGDL. Second, we evaluate the role of the Sine--ReLU activation configuration adopted in MGDL-1 by comparing it with a Sine-only counterpart. All hyperparameters and training settings other than the factor under investigation are kept fixed. The following four configurations are considered:
\begin{enumerate}
    \item {\bf MGDL-1 (Adaptive FD-MGDL):}
    \begin{equation*}
        \begin{aligned}
            &\mathrm{Grade\ 1}:\ [2] \to [256]\times2 \to [1]\\
            &\mathrm{Grade\ j}:\ [2] \to [256]_F\times j \to [256] \to [1], \ \ j=2,3, \dots, L.\\
        \end{aligned}
    \end{equation*}
    MGDL-1 follows the Adaptive FD-MGDL algorithm described in Algorithm \ref{adaptive FD-MGDL algorithm}, where the total number of grades $L$ is determined automatically during training according to a loss-based convergence criterion. For the Helmholtz problem \eqref{2D-Helmholtz-equation}, the adaptive procedure terminates at $L=6$, when the loss reduction between successive grades falls below the prescribed tolerance.
    \item {\bf MGDL-2:}
    \begin{equation*}
        \begin{aligned}
            &\mathrm{Grade\ 1}:\ [2] \to [256]\times2 \to [1]\\
            &\mathrm{Grade\ 2}:\ [2] \to [256]_F\times2 \to [256]\times2 \to [1]\\
            &\mathrm{Grade\ 3}:\ [2] \to [256]_F\times4 \to [256]\times3 \to [1].\\
        \end{aligned}
    \end{equation*}
    \item {\bf MGDL-3:}
    \begin{equation*}
        \begin{aligned}
            &\mathrm{Grade\ 1}:\ [2] \to [256]\times3 \to [1]\\
            &\mathrm{Grade\ 2}:\ [2] \to [256]_F\times3 \to [256]\times2 \to [1]\\
            &\mathrm{Grade\ 3}:\ [2] \to [256]_F\times5 \to [256]\times2 \to [1].\\
        \end{aligned}
    \end{equation*}
    MGDL-2 and MGDL-3 are constructed as controlled variants of MGDL-1 by redistributing a comparable total number of hidden layers into fewer grades with deeper per-grade architectures.
    \item {\bf MGDL-4 (Sine-only):} this configuration has exactly the same grade-wise architecture, adaptive stopping criterion, optimization parameters, and training schedule as MGDL-1, but uses the Sine activation throughout the trainable network instead of the Sine--ReLU activation configuration.
\end{enumerate}

\begin{table}[!htb]\centering
    \small
    \setlength{\tabcolsep}{10pt}
    \begin{threeparttable}
    \caption{Ablation results for different grade-wise architectures and activation-function configurations on the two-dimensional Helmholtz problem \eqref{2D-Helmholtz-equation}. MGDL-1, MGDL-2, and MGDL-3 use the Sine--ReLU activation configuration, whereas MGDL-4 (Sine-only) uses Sine activations throughout.}
    \label{NumericalResult_for_ablation}
    \centering
    \begin{tabular}{ccrccrcc} 
        \toprule
         Structure & Grade & Epochs & $t_{\max}$ & $t_{\min}$ & AC time (s)  & TrRSE & TeRSE \\
        \midrule
        \multirow{6}{*}{MGDL-1} 
        & Grade 1 & 400 & $10^{-1}$ & $10^{-2}$ & 489 & $5.62\times 10^{-1}$ & $5.64\times 10^{-1}$ \\
        & Grade 2 & 3,000 & $10^{-2}$ & $10^{-3}$ & 2,443 &$6.98\times 10^{-4}$ &$7.07\times 10^{-4}$ \\
        & Grade 3 & 3,000 & $10^{-3}$ & $10^{-4}$ & 3,736 & $5.49\times10^{-4}$ &$5.50\times 10^{-4}$ \\
        & Grade 4 & 2,500 & $10^{-3}$ & $10^{-3}$ & 4,798 &$5.41\times 10^{-4}$ &$5.43\times 10^{-4}$ \\
        & Grade 5 & 2,500 & $10^{-3}$ & $10^{-3}$ & 5,852 & $5.37\times 10^{-4}$ &$5.38\times 10^{-4}$ \\
        & Grade 6 & 1,000 & $10^{-3}$ & $10^{-3}$ & 6,272 &$\mathbf{5.37\times 10^{-4}}$ &$\mathbf{5.37\times 10^{-4}}$ \\
        \midrule
        \multirow{3}{*}{MGDL-2} 
        & Grade 1 & 2,000 & $10^{-1}$ & $10^{-2}$ & 2,456 & $3.99\times 10^{-1}$ & $4.01\times 10^{-1}$ \\
        & Grade 2 & 2,000 & $10^{-2}$ & $10^{-2}$ & 3,953 &$6.85\times 10^{-3}$ &$5.86\times 10^{-3}$ \\
        & Grade 3 & 2,000 & $10^{-3}$ & $10^{-3}$ & {\bf 6,083} & $4.61\times10^{-3}$ & $4.77\times10^{-3}$ \\
        \midrule
        \multirow{3}{*}{MGDL-3} 
        & Grade 1 & 2,000 & $10^{-1}$ & $10^{-4}$ & 3,890 & $2.03\times 10^{-2}$ & $2.07\times 10^{-2}$ \\
        & Grade 2 & 2,000 & $10^{-3}$ & $10^{-3}$ & 5,755  & $5.56\times10^{-4}$  &$5.64\times10^{-4}$  \\
        & Grade 3 & 2,000 & $10^{-3}$ & $10^{-3}$ &  7,526 & $5.43\times10^{-4}$ & $5.43\times10^{-4}$ \\
        \midrule
        & Grade 1 & 400 & $10^{-1}$ & $10^{-2}$ & 472 & $5.62\times 10^{-1}$ & $5.64\times 10^{-1}$ \\
        & Grade 2 & 3,000 & $10^{-2}$ & $10^{-3}$ & 2,801 & $1.01\times10^{-3}$ &$1.05\times 10^{-3}$ \\
        MGDL-4 & Grade 3 & 3,000 & $10^{-3}$ & $10^{-4}$ & 4,504 & $5.84\times10^{-4}$ &$5.86\times 10^{-4}$ \\
        (Sine-only) & Grade 4 & 2,500 & $10^{-3}$ & $10^{-3}$ & 5,905 &$5.83\times 10^{-4}$ &$5.86\times 10^{-4}$ \\
        & Grade 5 & 2,500 & $10^{-3}$ & $10^{-3}$ & 7,280 & $5.84\times 10^{-4}$ &$5.86\times 10^{-4}$ \\
        & Grade 6 & 1,000 & $10^{-3}$ & $10^{-3}$ &  7,830 &$5.84\times 10^{-4}$ &$5.86\times 10^{-4}$ \\
        \bottomrule
    \end{tabular}
    \end{threeparttable}
\end{table}

\begin{figure}[!htb]
\centering
\includegraphics[scale=0.83]{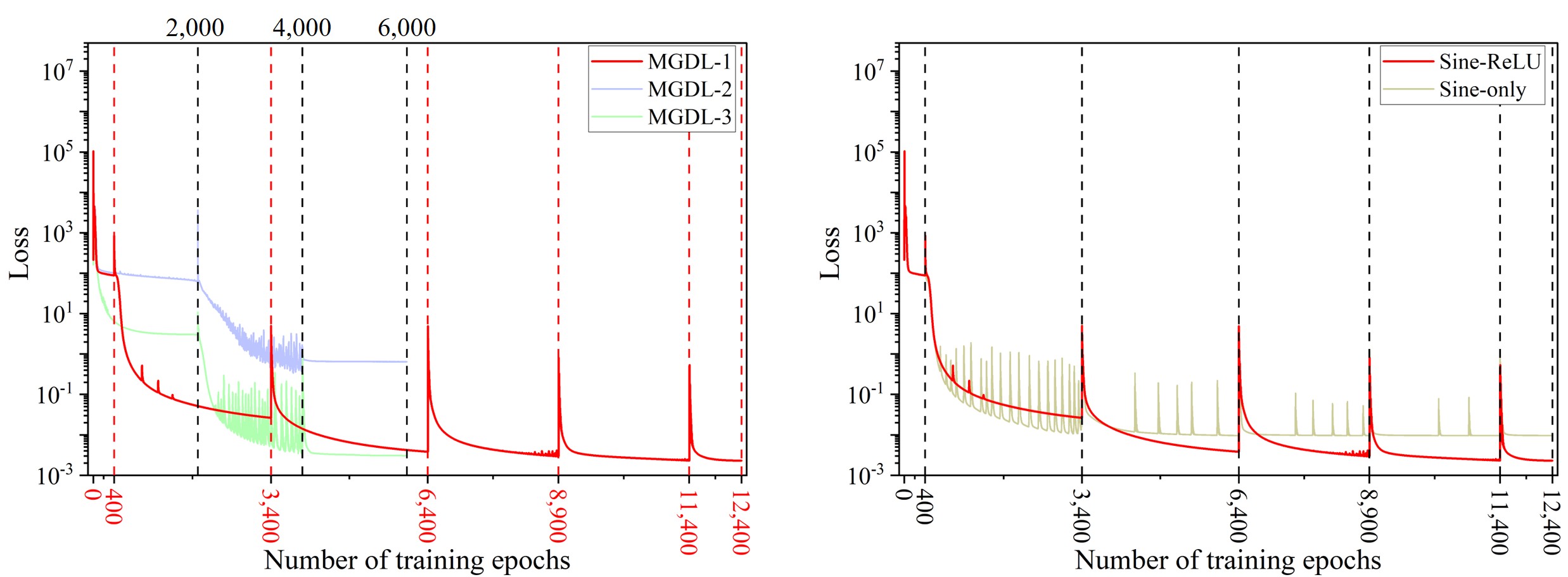}
\caption{Training-loss curves for MGDL-1, MGDL-2, MGDL-3, and MGDL-4 (Sine-only) on the two-dimensional Helmholtz problem \eqref{2D-Helmholtz-equation}.}
\label{MGDL-ablation}
\end{figure}

The results in Table \ref{NumericalResult_for_ablation} and Figure \ref{MGDL-ablation} demonstrate that both the grade-wise architectural design and the activation-function configuration influence the performance of MGDL. Under the same Sine‑ReLU activation setting, the structural ablation among MGDL‑1, MGDL‑2 and MGDL‑3 shows that the adaptive multi‑grade shallow correction strategy outperforms fixed deep designs. MGDL‑1 reaches final TrRSE and TeRSE of $5.37\times10^{-4}$, with a smooth and progressive error reduction across grades. In contrast, MGDL‑2 concentrates comparable total depth into only three grades and ends with substantially higher errors ($4.61\times10^{-3}$ and $4.77\times10^{-3}$). MGDL‑3 achieves a similar accuracy to MGDL‑1 ($5.43\times10^{-4}$) but requires a longer accumulated training time (7526 s versus 6272 s). These results indicate that distributing capacity over multiple shallow, adaptively introduced grades offers a better balance between accuracy, optimization behaviour and computational cost.

We further compare MGDL‑1 (Sine‑ReLU) with MGDL‑4 (Sine‑only). The Sine‑only variant yields final TrRSE and TeRSE about $8.7\%$ and $9.2\%$ higher than those of Sine‑ReLU, but still of the same order, indicating that pure Sine activations remain effective for the oscillatory solution. However, Sine‑only requires roughly $17\%$ more training time and exhibits noticeable loss oscillations and unstable convergence. In our framework, we use finite differences (FD) rather than automatic differentiation (AD) to compute the second‑order residuals. This avoids the gradient‑vanishing pathology that typically hinders ReLU in AD‑based PINNs. Thus, the Sine‑ReLU configuration harnesses the representational strength of Sine for oscillatory components while benefiting from ReLU’s efficiency and stability, leading to better accuracy and shorter training time than the homogeneous Sine‑only choice.

Overall, these results demonstrate that the proposed adaptive grade-wise architecture achieves a favorable balance among accuracy, optimization stability, and computational efficiency. The structural comparison confirms that distributing model capacity across multiple shallow grades is more effective than concentrating comparable depth within fewer grades. Meanwhile, the activation-function comparison shows that the Sine--ReLU configuration preserves the ability to approximate high-frequency oscillatory solutions while reducing the computational cost associated with the Sine-only model. These observations justify both the grade-wise shallow architecture and the activation-function design adopted in the proposed MGDL framework.

\section{Two-Dimensional Helmholtz Equations}\label{2d}
In this section, we consider the numerical solution of two-dimensional Helmholtz equations with Dirichlet boundary conditions at large wavenumbers:
\begin{equation}\label{2d-Helmholtz-Dirichlet}
    \left\{\begin{aligned}
        &\frac{\partial^2u}{\partial x_1^2}+\frac{\partial^2u}{\partial x_2^2}+\kappa^2u(x_1,x_2)=f(x_1,x_2), &&(x_1,x_2)\in \Omega,\\
        &u(x_1,x_2) = g(x_1,x_2),&&(x_1,x_2)\in \Gamma.
    \end{aligned}\right.
\end{equation}
We compare the performance of the proposed FD-MGDL method with several representative approaches, including AD-MGDL, FD-SGDL, Mscale \cite{liu2020multi}, FBPINN \cite{moseley2023finite}, SIREN \cite{sitzmann2020implicit}, PINN \cite{raissi2019physics}, and Pre-PINN \cite{ryck2024operator}.

Specifically, \textbf{Mscale} employs a multi-scale neural network architecture based on frequency scaling to enhance the approximation of high-frequency components; \textbf{FBPINN} adopts a domain decomposition framework in which the solution is represented as a sum of neural-network-based basis functions with compact support over overlapping subdomains; \textbf{SIREN} utilizes sinusoidal activation functions to construct implicit neural representations capable of accurately capturing oscillatory solutions and their derivatives. Following \cite{sitzmann2020implicit}, our SIREN implementation adopts standard weight initialization, a uniform distribution over $[-1/n, 1/n]$ for the first layer and $[-\sqrt{6/n}, \sqrt{6/n}]$ for subsequent layers, with zero initial biases, and applies unscaled sine activations across all hidden layers. Meanwhile, Pre-PINN \cite{ryck2024operator} reformulates PINN training from an operator preconditioning perspective to enhance convergence behavior under ill-conditioned settings.

To ensure a fair comparison, single-network baselines (such as FD-SGDL and SIREN) were configured with an overall depth and width matching the aggregated architecture across all trained grades of FD-MGDL. Consequently, these baselines possess a representational capacity equivalent to the complete multi-grade model rather than a single constituent grade. For structurally distinct architectures where direct layer-by-layer alignment is inapplicable—particularly Mscale and FBPINN—network configurations were calibrated so that their total trainable parameter counts matched those of the corresponding FD-MGDL model. This experimental design controls for model capacity, allowing the evaluation to focus directly on the efficacy of the multi-grade residual formulation and training strategy.


\subsection{Highly Oscillatory Sine Solution}\label{2d-sin}

We first consider the same two-dimensional Helmholtz problem with a highly oscillatory sine solution as in the previous section, now employed as a benchmark to evaluate and compare different numerical methods in the high-wavenumber regime. The source term is set to zero, 
\begin{equation*}
    f(x_1,x_2) = 0, \quad (x_1,x_2) \in \Omega = (0,1)^2.
\end{equation*}

Dirichlet boundary conditions are imposed on the boundary components $\Gamma=\{\Gamma_1,\Gamma_2,\Gamma_3,\Gamma_4\}$ as
\begin{equation*}
    g(x_1,x_2)=
    \begin{cases}
        0,  & \text{ if } (x_1,x_2)\in\Gamma_1:=\{0\}\times(0,1); \\
        0,  & \text{ if } (x_1,x_2)\in\Gamma_2:=(0,1)\times\{0\}; \\
        \sin\left(\frac{\sqrt{2}}{2}\kappa\right)\sin\left(\frac{\sqrt{2}}{2}\kappa x_2\right),  & \text{ if } (x_1,x_2)\in\Gamma_3:=\{1\}\times(0,1); \\
        \sin\left(\frac{\sqrt{2}}{2}\kappa x_1\right)\sin\left(\frac{\sqrt{2}}{2}\kappa\right),  & \text{ if } (x_1,x_2)\in\Gamma_4:=(0,1)\times\{1\},
    \end{cases}
\end{equation*}
where $\kappa$ is a constant wavenumber. Since the domain is the unit square $\Omega=(0,1)^2$ with characteristic length $I=1$, the dimensionless wavenumber is $\kappa I=\kappa$. The unique exact solution of problem \eqref{2d-Helmholtz-Dirichlet} is 
\begin{equation}\label{2d-sin-solution}
    u(x_1,x_2) = \sin\left(\frac{\sqrt{2}}{2}\kappa x_1\right)\sin\left(\frac{\sqrt{2}}{2}\kappa x_2\right),\quad (x_1,x_2)\in [0,1]\times[0,1],
\end{equation}
which becomes increasingly oscillatory as the wavenumber $\kappa$  grows. Consequently, larger $\kappa$ implies more wavelengths across the domain, intensifying the pollution effect and making the problem numerically more challenging.

We first solve equation \eqref{2d-Helmholtz-Dirichlet} with the exact solution \eqref{2d-sin-solution} at wavenumber $\kappa\in\{50,100,150,200\}$ by FD-MGDL and seven competing methods including AD-MGDL, FD-SGDL, Mscale, FBPINN, SIREN, PINN and Pre-PINN. For all the methods except Mscale,  we set $m=(300,500,700,700)$ for the respective wavenumbers and partition the interval $[0,1]$ into $m+1$ equally spaced subintervals with points
\begin{equation*}
    0=x_{0,i}<x_{1,i}<\cdots<x_{m,i}<x_{m+1,i}=1, \ \ i=1,2, 
\end{equation*}
where $x_{j,i}:=jh$ and $h:=1/(m+1)$. The corresponding training points are chosen as the tensor-product grid $(x_{j_1,1},x_{j_2,2})$, $j_1,j_2\in\mathbb{N}_m$. 
To ensure computational parity for \textbf{Mscale}, we account for its multi-branch architecture, where each input sample is processed across $S$ parallel scale networks. Because this structure increases forward and backward pass evaluations by a factor of $S$, evaluating $N$ points would disproportionately increase the per-iteration workload. We therefore set Mscale's batch size to $N/S$ points, randomly sampled from the domain, where $N$ represents the total grid points used by the other baselines. This normalization equalizes the total operations per gradient update across all evaluated methods, ensuring that training time comparisons isolate algorithmic efficiency under an equivalent computational budget.
%
In a similar manner, for all methods the testing points are selected as $ (\tilde{x}_{j_1,1},\tilde{x}_{j_2,2})$, $j_1,j_2\in\mathbb{N}_{\tilde{m}}$ with $\tilde{m}=(150,250,350,350)$ corresponding to the four wavenumbers. 

\begin{table}[!htb]\centering
    \small
    \setlength{\tabcolsep}{15pt}
    \begin{threeparttable}
    \caption{Performance comparison of FD-MGDL, AD-MGDL, FD-SGDL, Mscale, FBPINN, SIREN, PINN and Pre-PINN for the 2D Helmholtz problem \eqref{2d-Helmholtz-Dirichlet} with the exact solution \eqref{2d-sin-solution}.}
    \label{NumericalResult_for_2dPDE}
    \centering
    \begin{tabular}{clrrcc} 
        \toprule
         $\kappa$ & Method & Epochs & AC time (s)  & TrRSE & TeRSE \\
        \midrule
        \multirow{8}{*}{$50$} 
        & FD-MGDL & 12,400 & {\bf 6,272} & $\mathbf{5.37\times 10^{-4}}$ & $\mathbf{5.37\times 10^{-4}}$ \\
        & AD-MGDL & 12,400 & 15,152 & $6.03\times10^{-3}$ & $5.97\times10^{-3}$\\
        & FD-SGDL & 15,000 & 41,431 &$1.76\times 10^{-3}$ &$1.90\times 10^{-3}$ \\
        & Mscale & 15,000 & 9,277 & $2.14\times10^{-2}$ & $2.09\times10^{-2}$ \\
        & FBPINN & 15,000 &11,173 &$1.19\times 10^{-2}$ &$1.20\times 10^{-2}$ \\
        & SIREN & 15,000 & 32,944 & $3.06\times 10^{-2}$ & $3.08\times10^{-2}$ \\
        & PINN & 15,000 & 49,990 &$1.00\times 10^{0}$ &$1.00\times 10^{0}$ \\
        & Pre-PINN & 15,000 &61,286 &$1.00\times 10^{0}$ &$1.00\times 10^{0}$ \\  
        \midrule
        \multirow{8}{*}{$100$} & FD-MGDL & 6,500 & {\bf 9,196} & $\mathbf{5.54\times 10^{-3}}$ & $\mathbf{5.56\times 10^{-3}}$ \\
        & AD-MGDL & 6,500 & 19,044 & $8.24\times 10^{-3}$ & $8.14\times 10^{-3}$ \\
        & FD-SGDL & 8,000 & 47,792 & $1.65\times 10^{-2}$ &$1.87\times 10^{-2}$ \\
        & Mscale & 8,000 & 13,224 & $2.44\times10^{-2}$ & $2.44\times10^{-2}$ \\
        & FBPINN & 8,000 &12,965 &$4.33\times 10^{-2}$ &$4.33\times 10^{-2}$ \\
        & SIREN & 8,000 & 32,737 & $9.96\times 10^{-2}$ & $1.00\times10^{-1}$ \\
        & PINN & 8,000 & 53,636 &$1.00\times 10^{0}$ &$1.00\times 10^{0}$ \\
        & Pre-PINN & 8,000 & 70,695 &$1.00\times 10^{0}$ &$1.00\times 10^{0}$ \\
        \midrule
        \multirow{8}{*}{$150$} & FD-MGDL & 8,000 & {\bf 21,861} & $\mathbf{1.31\times 10^{-2}}$ & $\mathbf{1.34\times 10^{-2}}$ \\
        & AD-MGDL & 8,000 & 51,239 & $1.03\times 10^{-1}$ & $1.02\times 10^{-1}$ \\
        & FD-SGDL & 10,000 & 108,747  & $5.22\times10^{-1}$  &$5.23\times10^{-1}$  \\
        & Mscale & 10,000 & 33,191 & $6.02\times10^{-2}$ & $6.01\times10^{-2}$ \\
        & FBPINN & 10,000 &26,827 &$6.28\times 10^{-2}$ &$6.29\times 10^{-2}$ \\  
        & SIREN & 10,000 & 98,435 & $1.06\times10^{-1}$ & $1.06\times10^{-1}$ \\
        & PINN & 10,000 & 131,216 & $1.00\times10^{0}$ & $1.00\times10^{0}$ \\
        & Pre-PINN & 10,000 & 160,863 & $1.00\times10^{0}$ & $1.00\times10^{0}$ \\
        \midrule
        \multirow{8}{*}{$200$} & FD-MGDL & 6,500 & {\bf 17,776} & $\mathbf{3.92\times10^{-1}}$ & $\mathbf{3.92\times10^{-1}}$ \\
        & AD-MGDL & 6,500 & 37,321 & $8.22\times10^{-1}$ & $8.22\times10^{-1}$ \\
        & FD-SGDL & 8,000 & 88,110 & $8.79\times10^{-1}$ & $8.79\times10^{-1}$ \\
        & Mscale & 8,000 & 23,479 & $4.75\times10^{-1}$ & $4.75\times10^{-1}$ \\
        & FBPINN & 8,000 & 26,827 & $7.27\times10^{-1}$ & $7.28\times10^{-1}$ \\
        & SIREN & 8,000 & 63,123 & $8.95\times10^{-1}$ & $8.98\times10^{-1}$ \\
        & PINN & 8,000 & 106,135 & $1.00\times10^{0}$ & $1.00\times10^{0}$ \\
        & Pre-PINN & 8,000 & 139,890 & $1.00\times10^{0}$ & $1.00\times10^{0}$ \\
        \bottomrule
    \end{tabular}
    \end{threeparttable}
\end{table}

\begin{figure}[!htb]
\centering
\includegraphics[scale=0.85]{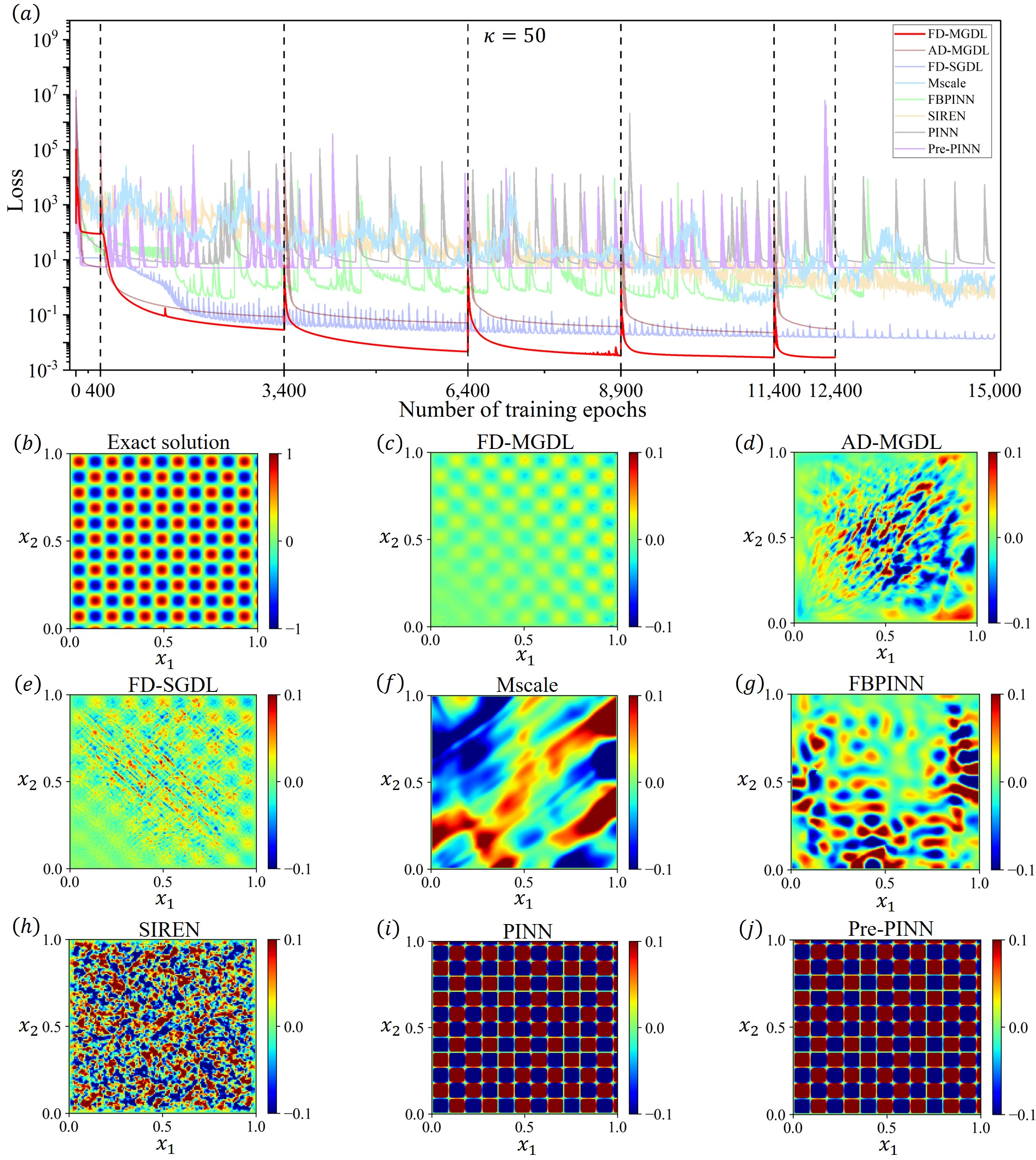}
\caption{Performance comparison of FD-MGDL and seven baseline methods (AD-MGDL, FD-SGDL, Mscale, FBPINN, SIREN, PINN and Pre-PINN) for the 2D Helmholtz problem \eqref{2d-Helmholtz-Dirichlet} with the exact solution \eqref{2d-sin-solution} at $\kappa=50$: $(a)$ training loss curves; $(b)$ exact solution; $(c)-(j)$ error visualizations of the numerical solutions.}
\label{2dsin-50}
\end{figure}

\begin{figure}[!htb]
\centering
\includegraphics[scale=0.85]{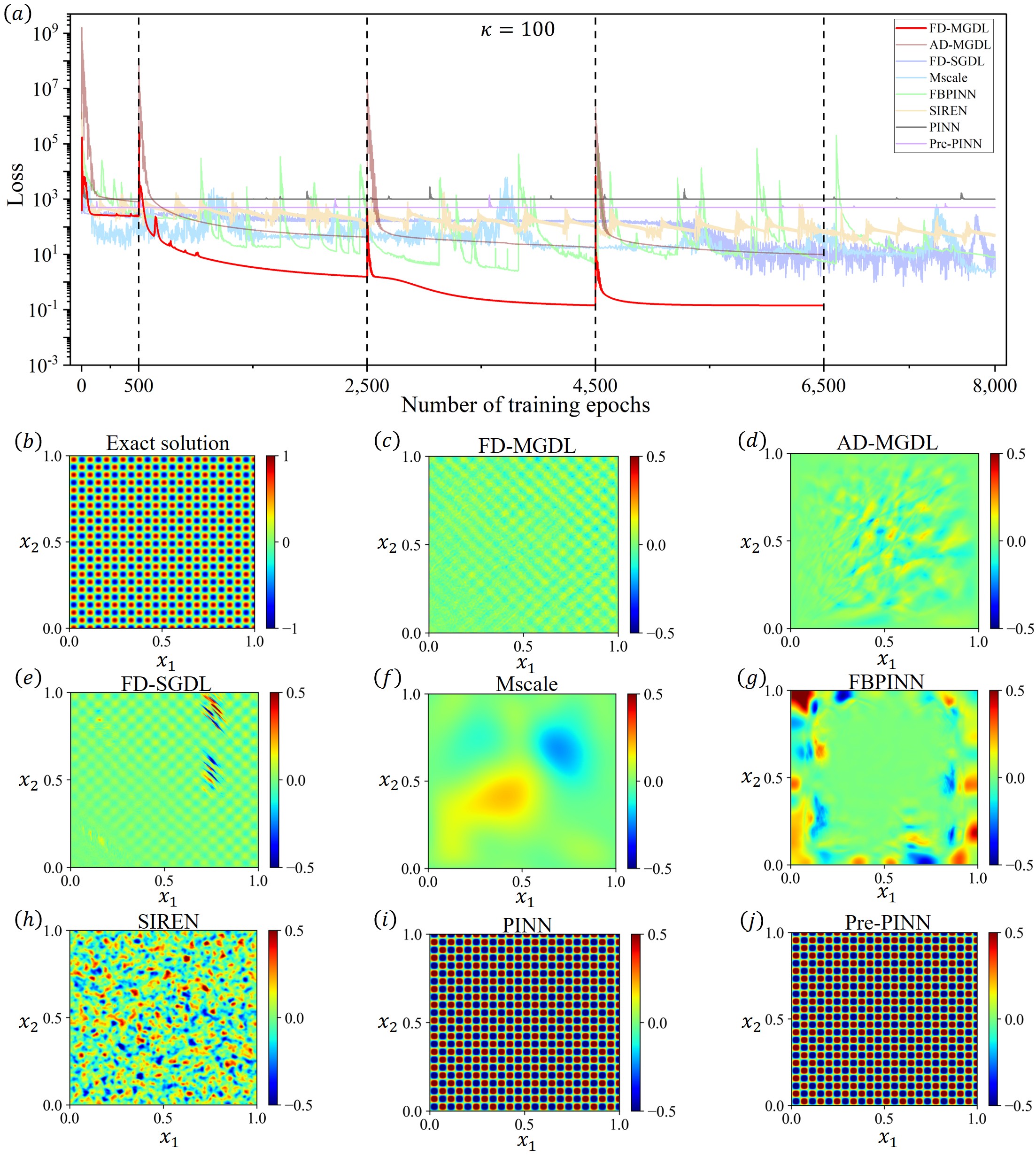}
\caption{Performance comparison of FD-MGDL and seven baseline methods (AD-MGDL, FD-SGDL, Mscale, FBPINN, SIREN, PINN and Pre-PINN) for the 2D Helmholtz problem \eqref{2d-Helmholtz-Dirichlet} with the exact solution \eqref{2d-sin-solution} at $\kappa=100$: $(a)$ training loss curves; $(b)$ exact solution; $(c)-(j)$ error visualizations of the numerical solutions.}
\label{2dsin-100}
\end{figure}

\begin{figure}[!htb]
\centering
\includegraphics[scale=0.85]{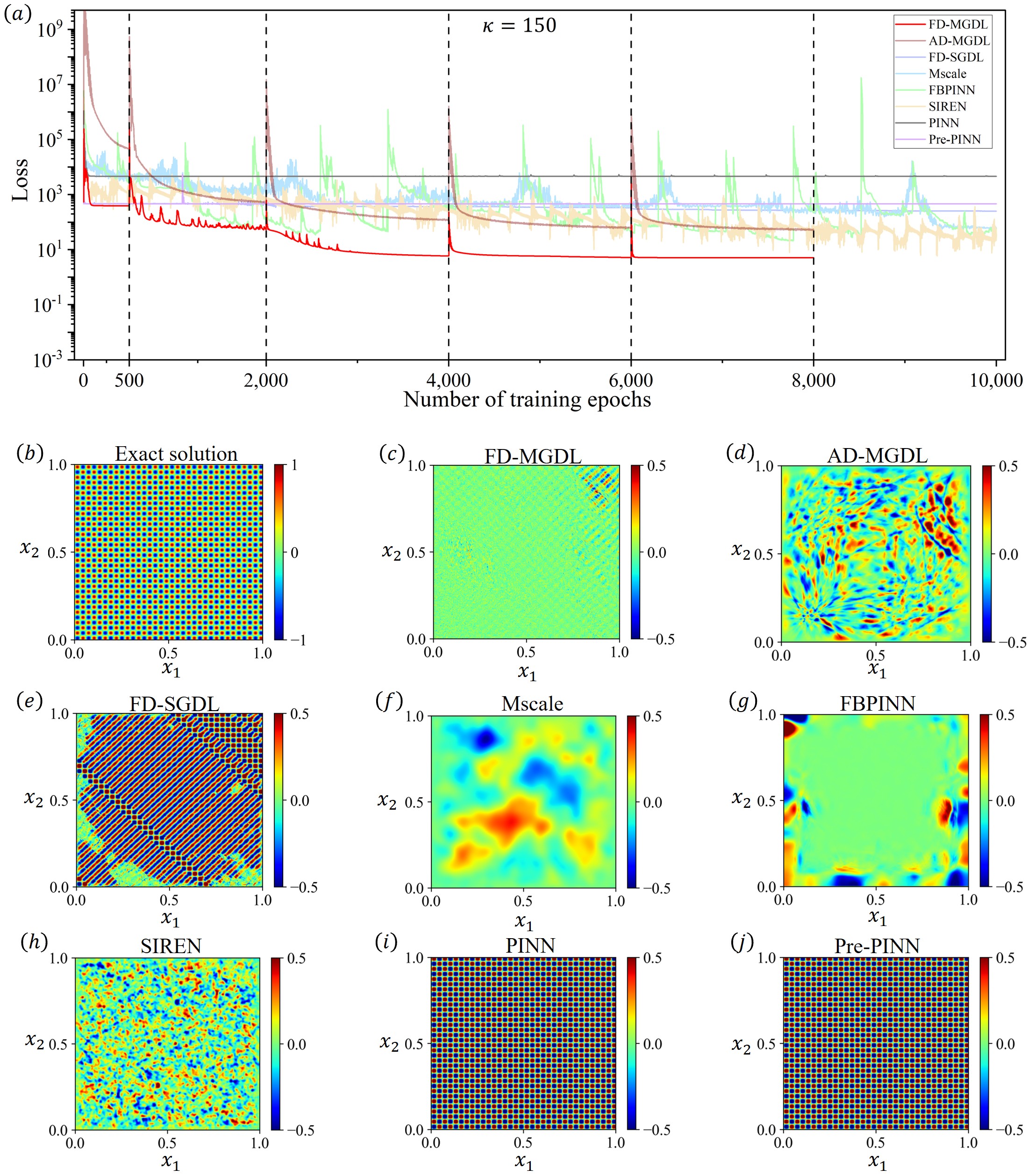}
\caption{Performance comparison of FD-MGDL and seven baseline methods (AD-MGDL, FD-SGDL, Mscale, FBPINN, SIREN, PINN and Pre-PINN) for the 2D Helmholtz problem \eqref{2d-Helmholtz-Dirichlet} with the exact solution \eqref{2d-sin-solution} at $\kappa=150$: $(a)$ training loss curves; $(b)$ exact solution; $(c)-(j)$ error visualizations of the numerical solutions.}
\label{2dsin-150}
\end{figure}

\begin{figure}[!htb]
\centering
\includegraphics[scale=0.81]{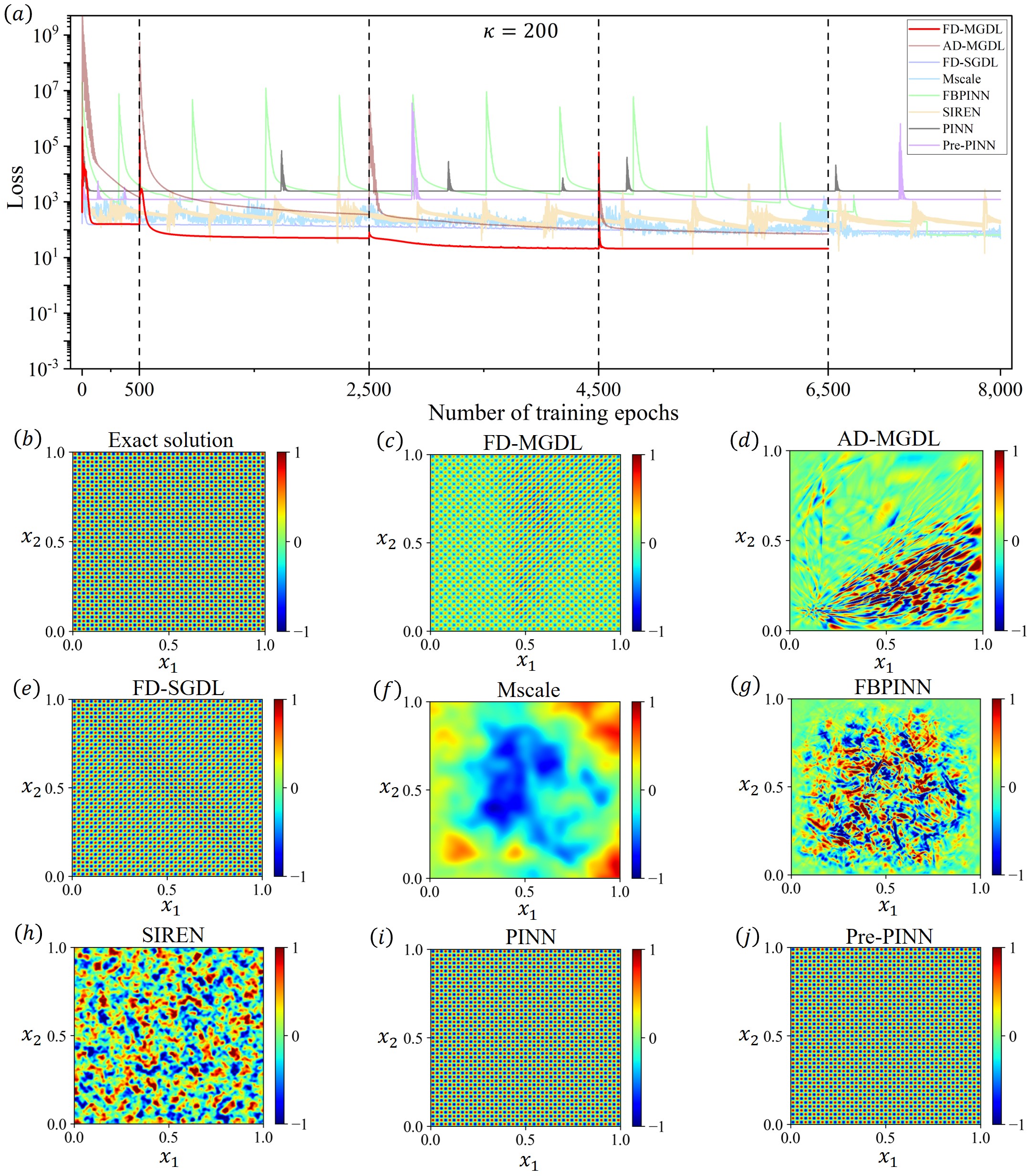}
\caption{Performance comparison of FD-MGDL and seven baseline methods (AD-MGDL, FD-SGDL, Mscale, FBPINN, SIREN, PINN and Pre-PINN) for the 2D Helmholtz problem \eqref{2d-Helmholtz-Dirichlet} with the exact solution \eqref{2d-sin-solution} at $\kappa=200$: $(a)$ training loss curves; $(b)$ exact solution; $(c)-(j)$ error visualizations of the numerical solutions.}
\label{2dsin-200}
\end{figure}

\begin{figure}[!htb]
\centering
\includegraphics[scale=0.83]{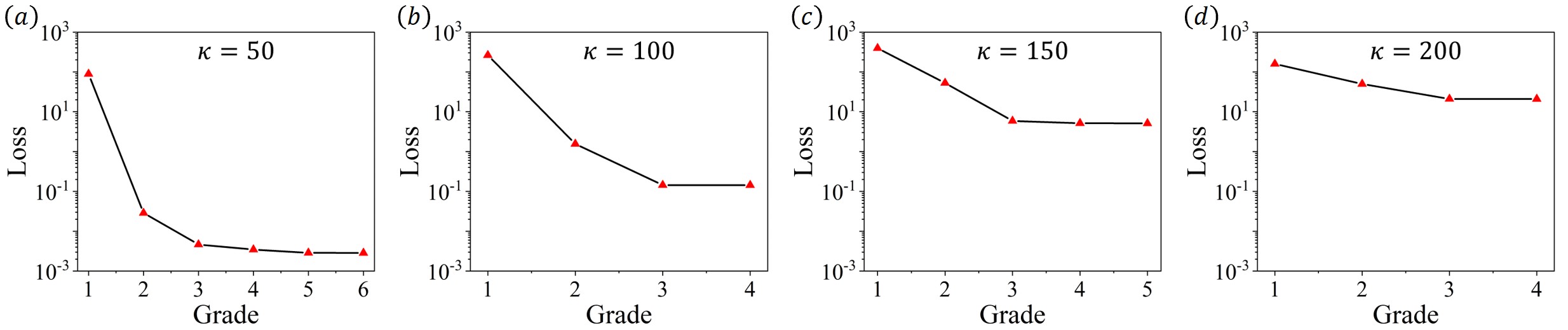}
\caption{Non-increasing grade-end training loss of FD-MGDL.}
\label{Optimal-loss}
\end{figure}

Table \ref{NumericalResult_for_2dPDE} presents a quantitative comparison between FD-MGDL and seven baseline methods for the Helmholtz problem \eqref{2d-Helmholtz-Dirichlet} with the exact solution \eqref{2d-sin-solution} at wavenumber $\kappa=50, 100, 150, 200$. The reported metrics include the number of training epochs, AC time, TrRSE and TeRSE. These indicators enable a comprehensive evaluation of each method in terms of accuracy, generalization, and computational efficiency. Table \ref{NumericalResult_for_2dPDE} clearly shows that FD-MGDL outperforms all competing methods in solution accuracy.

In addition to evaluating FD-MGDL against established baselines (Mscale, FBPINN, SIREN, PINN, and Pre-PINN), we analyze four internal variants, FD-MGDL, AD-MGDL, FD-SGDL, and AD-SGDL, which form a controlled $2 \times 2$ factorial comparison. This setup isolates two core methodological choices: the differentiation scheme (FD versus AD) and the training strategy (MGDL versus SGDL), where standard PINN corresponds to AD-SGDL.

Across the wavenumber range $\kappa \in [50, 200]$, comparing FD-MGDL, AD-MGDL, FD-SGDL, and AD-SGDL reveals the distinct and complementary benefits of the multi-grade training strategy and the finite-difference approximation of differentiation. The testing errors ($\mathrm{TeRSE}$) for standard AD-SGDL remain at $1.00$ across all tested wavenumbers, indicating that conventional single-grade automatic differentiation completely fails to resolve highly oscillatory wave fields. By contrast, introducing progressive grade-wise residual learning under automatic differentiation (AD-MGDL) consistently outperforms AD-SGDL. This isolated comparison confirms the intrinsic capability of the multi-grade architecture to alleviate spectral bias and overcome optimization bottlenecks in high-frequency regimes.

At $\kappa=50$, FD-SGDL outperforms AD-MGDL, demonstrating that the finite difference approximation of differentiation alone significantly improves trainability in moderate-wavenumber regimes. However, this advantage diminishes as $\kappa$ increases because SGDL lacks the multi-scale error decomposition inherent to MGDL. Notably, AD-MGDL surpasses FD-SGDL by factors of approximately $2.3$ and $5.1$ at $\kappa=100$ and $150$, respectively. This crossover indicates that the single-grade training paradigm becomes the primary bottleneck at higher wavenumbers, compounding the inherent limitations of automatic differentiation. Mechanistically, FD refines the construction and evaluation of the Helmholtz residual, whereas MGDL provides the structural framework needed to decompose and learn high-frequency error components. Consequently, FD-MGDL achieves the lowest error across all tested wavenumbers. These results confirm that neither the multi-grade architecture nor the FD formulation alone is sufficient over the entire wavenumber spectrum; rather, the strength of FD-MGDL lies in their synergy: MGDL partitions the target solution into progressive residual-learning tasks, while FD supplies a reliable residual signal to drive the correction networks.

The marked advantage of FD within MGDL aligns directly with the choice of ReLU correction networks in subsequent grades. Because a ReLU network is piecewise linear, its second partial spatial derivatives obtained via pointwise automatic differentiation vanish almost everywhere. Consequently, an AD-derived Laplacian fails to provide a meaningful gradient signal for local variations in ReLU corrections during collocation training. In contrast, the finite difference operator evaluates spatial function values across neighboring grid points, defining a well-conditioned discrete Helmholtz residual without requiring classical second-order differentiability. Thus, FD is not merely a computational convenience, but a mathematical formulation naturally compatible with grade-wise ReLU refinement.

This compatibility perspective is further corroborated by our baseline benchmarks: at higher wavenumbers ($\kappa = 150$ and $200$), AD-MGDL is outperformed by Mscale and FBPINN. Unlike the ReLU-activated correction grades in AD-MGDL, Mscale and FBPINN employ $C^2$-smooth network mappings, bypassing the AD--ReLU derivative vanishing issue.

Beyond mathematical compatibility, FD-MGDL yields a $2.1$--$2.4$-fold speedup over AD-MGDL across $\kappa \in [50, 200]$. While FD evaluates the discrete operator directly from forward-pass network predictions using central difference stencils, AD requires constructing and traversing a higher-order computational graph to evaluate second derivatives. Bypassing this second-order autodiff overhead dramatically reduces both memory consumption and wall-clock execution time, a benefit that becomes increasingly pronounced as network depth and collocation density grow.

A broader benchmarking against Mscale, FBPINN, SIREN, standard PINN, and Pre-PINN highlights the superior overall performance of FD-MGDL relative to existing neural solvers. Across $\kappa \in [50, 150]$, FD-MGDL substantially outperforms all competing baselines. Notably, standard PINN and Pre-PINN yield $\mathrm{TeRSE} = 1.00$ throughout this range, indicating a failure to capture non-trivial solution dynamics. While Mscale and FBPINN improve upon conventional PINN formulations, neither matches the accuracy of FD-MGDL. Similarly, SIREN yields inferior precision despite its periodic activation prior. At $\kappa = 200$, the performance margin narrows as numerical difficulties escalate: Mscale achieves a $\mathrm{TeRSE}$ of $4.75 \times 10^{-1}$, compared to $3.92 \times 10^{-1}$ for FD-MGDL. Although FD-MGDL retains leading accuracy, this smaller margin reflects the formidable challenge of resolving extremely high-frequency wave fields.

The loss convergence trajectories and error spatial maps across Figs. \ref{2dsin-50}--\ref{2dsin-200} strongly mirror our quantitative findings. At $\kappa = 50$, FD-MGDL exhibits rapid loss decay and produces the smallest, most spatially uniform error field. FD-SGDL demonstrates competitive performance in this moderate regime, whereas AD-MGDL plateaus at a higher error floor. As the wavenumber scales to $\kappa = 100$ and $150$, AD-MGDL surpasses FD-SGDL, visually confirming the distinct benefit of the multi-grade architecture. Across this intermediate frequency range, FD-MGDL leverages both progressive grade-wise convergence and well-conditioned finite-difference residuals, achieving minimal amplitude and phase errors among all evaluated methods. At $\kappa = 200$, the loss profiles and spatial distributions in Fig. \ref{2dsin-200} reflect severe performance degradation across all solvers. Although FD-MGDL still yields the lowest testing error ($\mathrm{TeRSE} = 3.92 \times 10^{-1}$) and the cleanest error profile, this outcome represents a relative rather than absolute success: FD-MGDL substantially delays high-frequency breakdown, but does not eradicate it entirely. Notably, the close tracking between $\mathrm{TrRSE}$ and $\mathrm{TeRSE}$ across convergent runs confirms that this degradation stems from fundamental representation and discretization challenges rather than overfitting.

To examine whether finite-step optimization empirically mirrors the monotonic behavior predicted by Theorem \ref{non-increasing} for theoretical grade-wise minimizers, we record the training loss at the conclusion of each grade. Figure \ref{Optimal-loss} displays the resulting grade-end loss trajectories across $\kappa \in \{50, 100, 150, 200\}$. For all tested wavenumbers, the grade-end loss remains monotonically non-increasing with respect to the grade index. Specifically, each newly trained correction network yields a final loss bounded above by that inherited from the preceding grade. Thus, while finite-step Adam optimization does not strictly guarantee exact grade-wise minimizers, the computed numerical iterates empirically satisfy the fundamental descent property predicted by Theorem \ref{non-increasing}.

Overall, Table \ref{NumericalResult_for_2dPDE} and Figs. \ref{2dsin-50}--\ref{2dsin-200} establish MGDL and FD as distinct yet highly complementary pillars of the proposed framework. Architecturally, MGDL serves as the primary mechanism for mitigating spectral bias, decoupling multi-scale errors, and systematically learning progressive oscillatory corrections. Numerically, FD supplies a mathematically sound discrete Helmholtz operator for non-smooth ReLU corrections while bypassing the heavy computational burden of second-order automatic differentiation. The performance edge of AD-MGDL over FD-SGDL at $\kappa \in \{100, 150\}$ proves that the impact of MGDL extends far beyond simply substituting AD with FD. Conversely, the consistent superiority of FD-MGDL over AD-MGDL demonstrates that MGDL reaches its full potential when paired with an operator formulation compatible with piecewise linear corrections. Their integration yields optimal performance across the entire tested spectrum, though the findings at $\kappa = 200$ also delineate the operational limits of the framework in extreme high-frequency regimes.

\subsection{Plane Wave Propagation Problem}\label{2d-wave}

We next evaluate a two-dimensional Helmholtz problem featuring a plane wave solution to further assess solver performance in high-wavenumber regimes. Given the poor performance of the baseline PINN, SIREN, and Pre-PINN models observed in the last subsection, we restrict all subsequent comparative evaluations throughout the remainder of this paper to FD-MGDL, FD-SGDL, Mscale, and FBPINN.

In this test case, we consider a homogeneous source term, $f(\mathbf{x})=0$, on the unit square $\Omega:=(0,1)^2$. Dirichlet boundary conditions are prescribed on $\partial\Omega := \bigcup_{j=1}^4 \Gamma_j$ such that the boundary data $g$ is consistent with the exact monochromatic plane-wave solution:
\begin{equation}\label{2d-wave-solution}u(x_1,x_2) = \exp\left[i(\kappa_1 x_1 + \kappa_2 x_2)\right], \quad \mathbf{x} \in \bar{\Omega},
\end{equation}
where the wave vector components are $(\kappa_1, \kappa_2) := \kappa(\cos\theta, \sin\theta)$, with $\kappa$ denoting the wavenumber and $\theta \in [0, 2\pi)$ the propagation angle.

Specifically, the boundary function $g$ is defined piecewise:
\begin{equation*}
    g(x_1,x_2)=
    \begin{cases}
        \exp(i\kappa_2 x_2),  & \text{on } \Gamma_1 := \{0\}\times(0,1); \\ 
        \exp(i\kappa_1 x_1),  & \text{on } \Gamma_2 := (0,1)\times \{0\}; \\ 
        \exp[i(\kappa_1+\kappa_2 x_2)],  & \text{on } \Gamma_3 := \{1\}\times(0,1); \\ 
        \exp[i(\kappa_1 x_1+\kappa_2)],  & \text{on } \Gamma_4 := (0,1)\times\{1\}.
    \end{cases}
\end{equation*}
As the wavenumber $\kappa$ increases, the solution becomes highly oscillatory, posing a significant challenge for numerical approximation methods due to the stringent sampling requirements needed to resolve the wave phases.

We evaluate the performance of FD-MGDL, FD-SGDL, Mscale, and FBPINN for  $\kappa \in \{50, 100, 150, 200\}$ and a fixed direction $\theta = \pi/4$. To resolve the increasing oscillations, we scale the number of grid points $m$ with $\kappa$, setting $m \in \{300, 500, 700, 700\}$ for training and $\tilde{m} \in \{150, 250, 350, 350\}$ for testing. Training points for most methods are arranged on a tensor-product grid; however, for Mscale, points are randomly sampled to maintain a total count equivalent to the other methods divided by the number of scales.


\begin{table}[!htb]\centering
    \small
    \setlength{\tabcolsep}{15pt}
    \begin{threeparttable}
    \caption{Performance comparison of FD-MGDL, FD-SGDL, Mscale and FBPINN for the 2D Helmholtz problem \eqref{2d-Helmholtz-Dirichlet} with the exact solution \eqref{2d-wave-solution}.}
    \label{NumericalResult_for_2dplanewave}
    \centering
    \begin{tabular}{clrrcc} 
        \toprule
         $\kappa$ & Method & Epoch & AC time (s)  & TrRSE & TeRSE \\
        \midrule
        \multirow{4}{*}{$50$} & FD-MGDL & 4,500 & {\bf 2,540} & $\mathbf{3.40\times10^{-4}}$ & $\mathbf{3.42\times10^{-4}}$ \\
        & FD-SGDL & 6,000 & 14,079 & $3.54\times10^{-3}$ & $3.57\times10^{-3}$  \\
        & Mscale & 6,000 & 2,823 & $3.90\times10^{-2}$ & $3.82\times10^{-2}$ \\
        & FBPINN  & 6,000 & 4,544 & $1.02\times10^{-2}$ & $1.02\times10^{-2}$ \\
        \midrule
        \multirow{4}{*}{$100$} & FD-MGDL & 6,500 & {\bf 9,493} & $\mathbf{2.82\times10^{-3}}$ & $\mathbf{2.83\times10^{-3}}$ \\
        & FD-SGDL & 8,000 & 57,681 & $4.03\times10^{-1}$ & $4.03\times10^{-1}$\\
        & Mscale & 8,000 & 11,548 & $1.89\times10^{-2}$ & $1.81\times10^{-2}$ \\
        & FBPINN  & 8,000 & 16,066 & $5.82\times10^{-2}$ & $5.82\times10^{-2}$\\
        \midrule
        \multirow{4}{*}{$150$} & FD-MGDL & 2,500 & {\bf 6,894} & $\mathbf{8.07\times10^{-3}}$ & $\mathbf{8.07\times10^{-3}}$ \\
        & FD-SGDL & 5,000 & 40,342 & $8.16\times10^{-2}$ & $8.19\times10^{-2}$ \\
        & Mscale & 5,000 & 13,799 & $2.12\times10^{-1}$ & $2.09\times10^{-1}$ \\
        & FBPINN  & 5,000 & 21,367 & $1.37\times10^{-1}$ & $1.37\times10^{-1}$ \\
        \midrule
        \multirow{4}{*}{$200$} & FD-MGDL & 3,500 & {\bf 10,462} & $\mathbf{5.20\times10^{-2}}$ & $\mathbf{5.21\times10^{-2}}$ \\
        & FD-SGDL & 5,000 & 70,964 & $7.37\times10^{-1}$ & $7.38\times10^{-1}$ \\
        & Mscale & 5,000 & 14,292 & $2.79\times10^{-1}$ & $2.74\times10^{-1}$ \\
        & FBPINN  & 5,000 & 20,103 & $2.65\times10^{-1}$ & $2.65\times10^{-1}$ \\
        \bottomrule
    \end{tabular}
    \end{threeparttable}
\end{table}

\begin{figure}[!htb]
\centering
\includegraphics[scale=0.83]{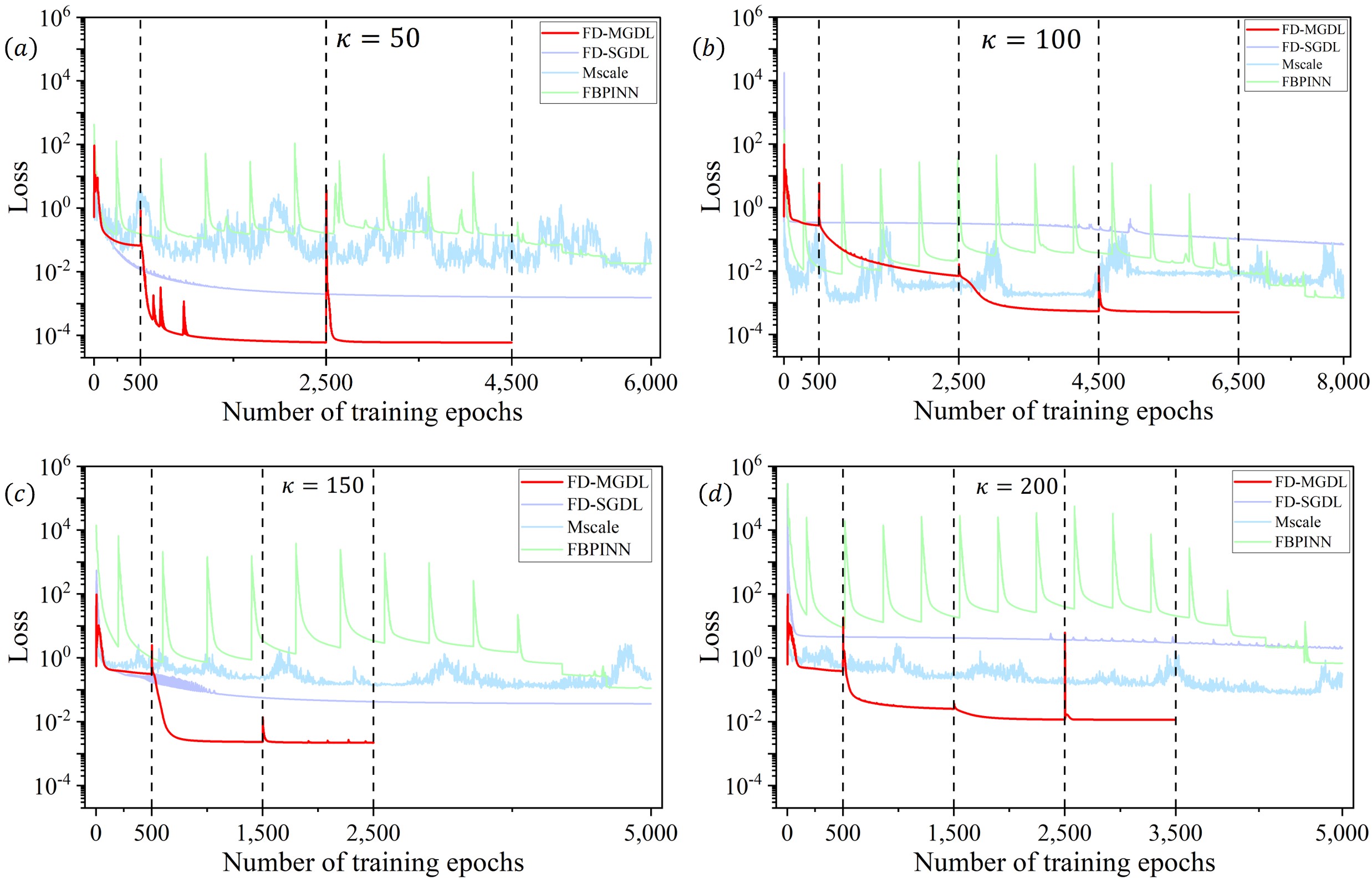}
\caption{Performance comparison of training loss curves of FD-MGDL and three baseline methods (FD-SGDL, Mscale and FBPINN) for the 2D Helmholtz problem \eqref{2d-Helmholtz-Dirichlet} with the exact solution \eqref{2d-wave-solution}: $(a)$ $\kappa=50$; $(b)$ $\kappa=100$; $(c)$ $\kappa=150$; $(d)$ $\kappa=200$.}
\label{2dwave-loss}
\end{figure}

Table \ref{NumericalResult_for_2dplanewave} summarizes the performance of FD-MGDL alongside three baselines — FD-SGDL, Mscale, and FBPINN — for wavenumbers $\kappa \in \{50, 100, 150, 200\}$. Across all cases, FD-MGDL demonstrates superior accuracy and efficiency. At $\kappa=50$, it achieves training (TrRSE) and testing (TeRSE) errors on the order of $10^{-4}$ with significantly fewer epochs and lower computational cost than FD-SGDL. In contrast, Mscale and FBPINN yield errors one to two orders of magnitude higher, struggling to resolve the plane wave structure.

As $\kappa$ increases, the intensified oscillations exacerbate numerical difficulty. While all baseline methods experience rapid accuracy degradation, FD-MGDL consistently maintains the lowest error levels. For $\kappa=100$ and $150$, its relative errors remain roughly an order of magnitude smaller than those of Mscale and FBPINN, and several orders smaller than FD-SGDL. Even at the extreme $\kappa=200$, FD-MGDL remains the most robust, whereas baselines suffer from pronounced error growth typically associated with pollution effects and optimization stagnation.

Convergence dynamics are further illustrated in Figure \ref{2dwave-loss}. FD-MGDL exhibits faster and more stable loss decay across the entire wavenumber spectrum. Conversely, FD-SGDL stagnates prematurely, and both Mscale and FBPINN display increasingly erratic trajectories as the frequency rises.

In summary, these experiments confirm that FD-MGDL achieves high precision at moderate wavenumbers and exceptional robustness in high-frequency regimes. By effectively mitigating the convergence hurdles and accuracy loss observed in existing PINN and multiscale solvers, the multigrade learning strategy establishes FD-MGDL as a highly reliable approach for oscillatory Helmholtz problems.

\section{Three-Dimensional Helmholtz Equations}\label{3d}

In this section, we consider the three-dimensional Helmholtz equation with Dirichlet boundary conditions:
\begin{equation}\label{3d-Helmholtz-Dirichlet}
    \left\{\begin{aligned}
        &\frac{\partial^2u}{\partial x_1^2}+\frac{\partial^2u}{\partial x_2^2}+\frac{\partial^2u}{\partial x_3^2}+\kappa^2u(x_1,x_2,x_3)=f(x_1,x_2,x_3),&&(x_1,x_2,x_3)\in \Omega,\\
        &u(x_1,x_2,x_3) = g(x_1,x_2,x_3),&&(x_1,x_2,x_3)\in \Gamma.
    \end{aligned}\right.
\end{equation} 
We compare the performance of FD-MGDL, FD-SGDL, Mscale and FBPINN for solving equation \eqref{3d-Helmholtz-Dirichlet} in the large-wavenumber regime.

\subsection{Highly Oscillatory Sine Solution}\label{3d-sin}
We evaluate the performance of the proposed method against several baselines using a test case of \eqref{3d-Helmholtz-Dirichlet} with a highly oscillatory sinusoidal solution; this facilitates a comparative analysis of the methods' efficacy in the high-wavenumber regime.

In this test case, we consider a homogeneous source term, $f(\mathbf{x}) = 0$, over the unit cubic domain $\Omega = (0,1)^3$. Dirichlet boundary conditions are prescribed on $\partial\Omega := \bigcup_{i=1}^6 \Gamma_i$ as follows:
\begin{equation*}
    g(x_1,x_2,x_3)=
    \begin{cases}
        0,  & \text{ if } (x_1,x_2,x_3)\in\Gamma_1:=\{0\}\times(0,1)\times(0,1); \\
        0,  & \text{ if } (x_1,x_2,x_3)\in\Gamma_2:=(0,1)\times\{0\}\times(0,1); \\
        0,  & \text{ if } (x_1,x_2,x_3)\in\Gamma_3:=(0,1)\times(0,1)\times\{0\}; \\
        \sin\left(\frac{\sqrt{3}}{3}\kappa\right)\sin\left(\frac{\sqrt{3}}{3}\kappa x_2\right)\sin\left(\frac{\sqrt{3}}{3}\kappa x_3\right),  & \text{ if } (x_1,x_2,x_3)\in\Gamma_4:=\{1\}\times(0,1)\times(0,1); \\
        \sin\left(\frac{\sqrt{3}}{3}\kappa x_1\right)\sin\left(\frac{\sqrt{3}}{3}\kappa\right)\sin\left(\frac{\sqrt{3}}{3}\kappa x_3\right),  & \text{ if } (x_1,x_2,x_3)\in\Gamma_5:=(0,1)\times\{1\}\times(0,1);\\
        \sin\left(\frac{\sqrt{3}}{3}\kappa x_1\right)\sin\left(\frac{\sqrt{3}}{3}\kappa x_2\right)\sin\left(\frac{\sqrt{3}}{3}\kappa\right),  & \text{ if } (x_1,x_2,x_3)\in\Gamma_6:=(0,1)\times(0,1)\times\{1\},
    \end{cases}
\end{equation*}
where $\kappa$ is a constant wavenumber. Under these conditions, the analytical solution to \eqref{3d-Helmholtz-Dirichlet} is:
\begin{equation}\label{3d-sin-solution}
    u(x_1,x_2,x_3) = \sin\left(\frac{\sqrt{3}}{3}\kappa x_1\right)\sin\left(\frac{\sqrt{3}}{3}\kappa x_2\right)\sin\left(\frac{\sqrt{3}}{3}\kappa x_3\right),\quad (x_1,x_2,x_3)\in [0,1]\times[0,1]\times[0,1],
\end{equation}
which exhibits increasingly rapid oscillations as $\kappa$ increases. This test case is specifically designed to ensure that the exact solution possesses a uniform degree of oscillation across all three coordinate directions, directly proportional to $\kappa$.

We solve the equation \eqref{3d-Helmholtz-Dirichlet} with the exact solution \eqref{3d-sin-solution} for wavenumbers $\kappa \in \{20, 30, 40, 50\}$ utilizing FD-MGDL, FD-SGDL, Mscale, and FBPINN. For all methods excluding Mscale, we set $m=60$ and discretize the unit interval $[0,1]$ into $m+1$ uniform subintervals. The resulting grid points are defined as $x_{j,i} := jh$ for $j \in \{0, \dots, m+1\}$ and $i \in \{1, 2, 3\}$, with a mesh size of $h := 1/(m+1)$. The training set is constructed using a tensor-product grid of these points: $\mathcal{X}_{train} = \{(x_{j_1,1}, x_{j_2,2}, x_{j_3,3}) : j_1, j_2, j_3 \in \mathbb{N}_m\}$. For the Mscale method, training points are randomly sampled from the computational domain $\Omega$; the total number of samples is kept consistent with the other methods by dividing the tensor-grid cardinality by the number of scales. For performance evaluation, testing points $\{(\tilde{x}_{j_1,1}, \tilde{x}_{j_2,2}, \tilde{x}_{j_3,3}) : j_1, j_2, j_3 \in \mathbb{N}_{\tilde{m}}\}$ are generated similarly using $\tilde{m}=30$ for all models.


\begin{table}[!htb]\centering
    \small
    \setlength{\tabcolsep}{15pt}
    \begin{threeparttable}
    \caption{Performance comparison of FD-MGDL, FD-SGDL, Mscale and FBPINN for the 3D Helmholtz problem \eqref{3d-Helmholtz-Dirichlet} with the exact solution \eqref{3d-sin-solution}.}
    \label{NumericalResult_for_3dPDE}
    \centering
    \begin{tabular}{clrrcc} 
        \toprule
         $\kappa$ & Method & Epoch & AC time (s)  & TrRSE & TeRSE \\
        \midrule
        \multirow{4}{*}{$20$} & FD-MGDL & 8,500 & {\bf 10,828} & $\mathbf{2.74\times10^{-2}}$ & $\mathbf{3.31\times10^{-2}}$ \\
        & FD-SGDL & 10,000 & 77,071 & $2.93\times10^{-2}$ & $3.55\times10^{-2}$ \\
        & Mscale & 10,000 & 20,117 & $1.89\times10^{-1}$ & $1.71\times10^{-1}$ \\
        & FBPINN  & 10,000 & 23,167 & $1.19\times10^{-1}$ & $1.19\times10^{-1}$ \\
        \midrule
        \multirow{4}{*}{$30$} & FD-MGDL & 6,000 & {\bf 8,038} & $\mathbf{1.69\times10^{-2}}$ & $\mathbf{1.81\times10^{-2}}$ \\
        & FD-SGDL & 8,000  & 33,722 & $1.77\times10^{-1}$ & $2.85\times10^{-1}$ \\
        & Mscale & 8,000 & 16,119 & $5.08\times10^{-1}$ & $4.13\times10^{-1}$ \\
        & FBPINN  & 8,000  & 21,090 & $2.82\times10^{-1}$ & $2.83\times10^{-1}$ \\
        \midrule
        \multirow{4}{*}{$40$} & FD-MGDL & 12,500 & {\bf 12,810} & $\mathbf{3.13\times10^{-1}}$ & $\mathbf{3.28\times10^{-1}}$ \\
        & FD-SGDL & 15,000 & 145,043 & $4.30\times10^{-1}$ & $4.58\times10^{-1}$ \\
        & Mscale & 15,000 & 29,865 & $5.58\times10^{-1}$ & $4.57\times10^{-1}$ \\
        & FBPINN  & 15,000 & 38,613 & $7.37\times10^{-1}$ & $7.37\times10^{-1}$ \\
        \midrule
        \multirow{4}{*}{$50$} & FD-MGDL & 6,500 & {\bf 8,646} & $\mathbf{3.48\times10^{-1}}$ & $\mathbf{3.91\times10^{-1}}$ \\
        & FD-SGDL & 8,000 & 64,206 & $9.85\times10^{-1}$ & $9.86\times10^{-1}$ \\
        & Mscale & 8,000 & 14,597 & $6.00\times10^{-1}$ & $5.46\times10^{-1}$ \\
        & FBPINN  & 8,000 & 24,292 & $9.25\times10^{-1}$ & $9.25\times10^{-1}$ \\
        \bottomrule
    \end{tabular}
    \end{threeparttable}
\end{table}

\begin{figure}[!htb]
\centering
\includegraphics[scale=0.83]{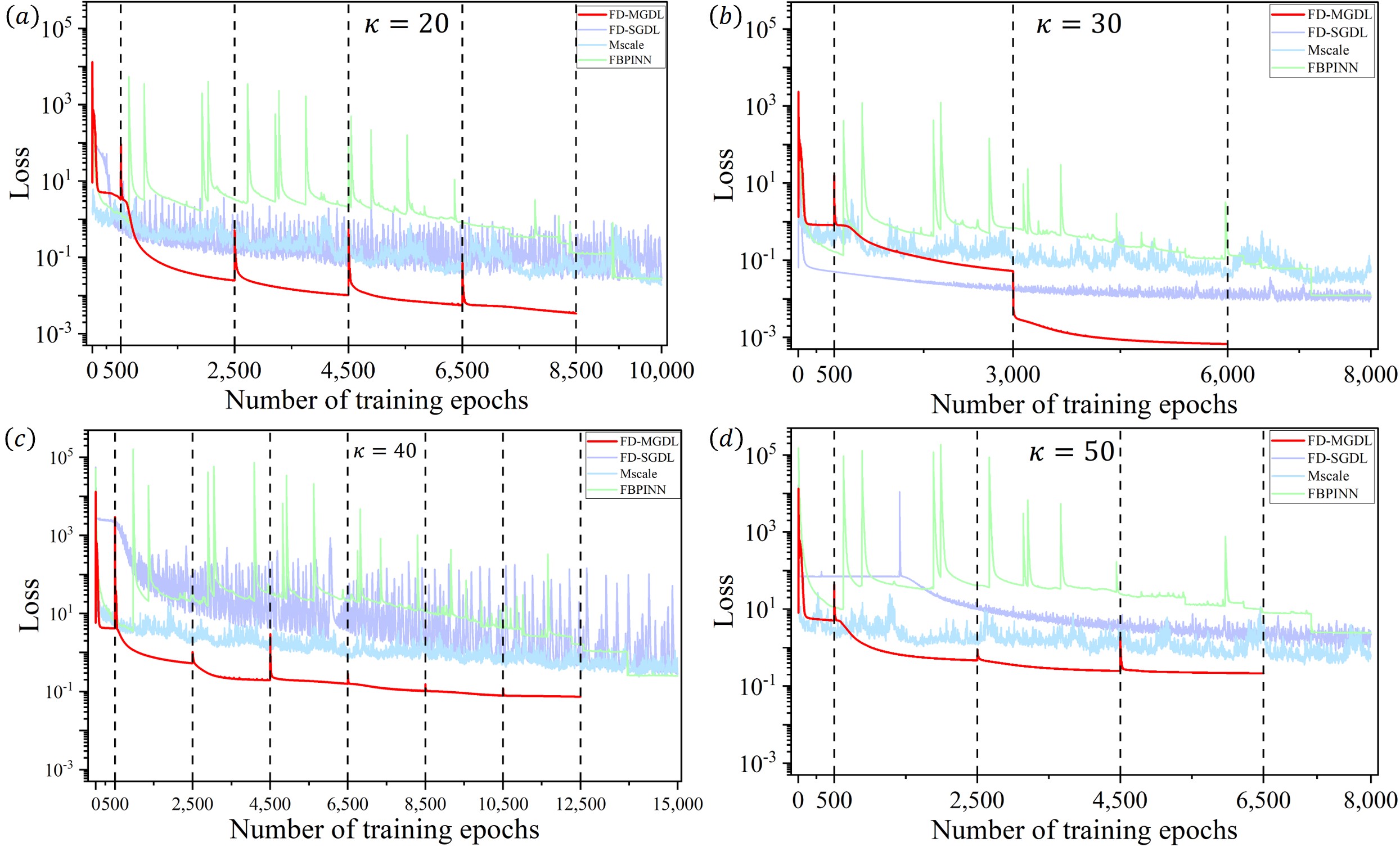}
\caption{Performance comparison of training loss curves of FD-MGDL and three baseline methods (FD-SGDL, Mscale and FBPINN) for the 3D Helmholtz problem \eqref{3d-Helmholtz-Dirichlet} with the exact solution \eqref{3d-sin-solution}: $(a)$ $\kappa=20$; $(b)$ $\kappa=30$; $(c)$ $\kappa=40$; $(d)$ $\kappa=50$.}
\label{3dsin-loss}
\end{figure}

Table \ref{NumericalResult_for_3dPDE} presents a comparative analysis of FD-MGDL against three baselines — FD-SGDL, Mscale, and FBPINN — for wavenumbers $\kappa \in \{20, 30, 40, 50\}$. Moving from two to three dimensions significantly increases computational complexity and exacerbates the difficulties inherent in capturing oscillatory wave propagation.

At $\kappa=20$, FD-MGDL achieves the superior performance, yielding the lowest training (TrRSE) and testing (TeRSE) errors. Notably, while FD-SGDL achieves comparable accuracy, its computational cost is nearly an order of magnitude higher, highlighting the poor scalability of single-grade networks in 3D. Meanwhile, Mscale and FBPINN exhibit markedly higher errors, struggling to resolve the 3D oscillatory structure.

As $\kappa$ increases to $30$ and $40$, the problem's numerical difficulty rises sharply, leading to a general deterioration in accuracy across all solvers. Nevertheless, FD-MGDL consistently maintains the lowest error levels with reduced computational overhead. For $\kappa=30$, FD-MGDL preserves a testing error on the order of $10^{-2}$, whereas baseline errors escalate to $10^{-1}$ or even $10^0$. This suggests that the multigrade strategy effectively mitigates optimization challenges in moderately high-frequency regimes.

At the highest wavenumber ($\kappa=50$), the severe oscillations cause substantial error growth in all methods. However, FD-MGDL remains the most robust, avoiding the near-complete loss of accuracy observed in FD-SGDL and FBPINN. Its relative performance advantage in this regime underscores the stability conferred by the multi-grade approach.

Convergence dynamics are illustrated via the training loss curves in Fig. \ref{3dsin-loss}. FD-MGDL displays faster and more stable decay across all wavenumbers. In contrast, FD-SGDL suffers from stagnation as $\kappa$ increases, while Mscale and FBPINN exhibit increasingly irregular training trajectories. These qualitative observations align closely with the quantitative results in Table \ref{NumericalResult_for_3dPDE}.

In summary, these 3D experiments demonstrate that FD-MGDL effectively scales to higher-dimensional Helmholtz problems. It offers a superior trade-off between accuracy and efficiency, delivering more robust solutions than existing SGDL, Multiscale, and PINN-based approaches in high-frequency settings.

\subsection{Plane Wave Propagation Problem}\label{3d-wave}
We extend the plane wave benchmark from subsection \ref{2d-wave} to the three-dimensional unit cube $\Omega = (0,1)^3$, assessing the numerical methods in higher-dimensional, high-wavenumber regimes.

For this test case, we consider the homogeneous Helmholtz equation ($f=0$). Dirichlet boundary conditions are prescribed on $\partial\Omega = \bigcup_{j=1}^6 \Gamma_j$, with the boundary data $g$ consistent with the exact solution:
\begin{equation}\label{3d-wave-solution}
u(x_1,x_2,x_3) = \exp\left[i(\kappa_1 x_1 + \kappa_2 x_2 + \kappa_3 x_3)\right], \quad \mathbf{x} \in \bar{\Omega}.
\end{equation}
The wave vector components are defined as $(\kappa_1, \kappa_2, \kappa_3) := \kappa(\cos\phi\cos\theta, \cos\phi\sin\theta, \sin\phi)$, where $\kappa$ is the wavenumber and $(\phi, \theta)$ are the angular parameters determining the propagation direction. Specifically, the boundary function $g$ is given by:
\begin{equation*}
    g(x_1,x_2,x_3)=
    \begin{cases}
        \mathrm{exp}\left[i(\kappa_2x_2+\kappa_3x_3)\right],  & \text{ if } (x_1,x_2,x_3)\in\Gamma_1:=\{0\}\times(0,1)^2; \\
        \mathrm{exp}\left[i(\kappa_1x_1+\kappa_3x_3)\right],  & \text{ if } (x_1,x_2,x_3)\in\Gamma_2:=(0,1)\times\{0\}\times(0,1); \\
        \mathrm{exp}\left[i(\kappa_1x_1+\kappa_2x_2)\right],  & \text{ if } (x_1,x_2,x_3)\in\Gamma_3:=(0,1)^2\times\{0\}; \\
        \mathrm{exp}\left[i(\kappa_1+\kappa_2x_2+\kappa_3x_3)\right],  & \text{ if } (x_1,x_2,x_3)\in\Gamma_4:=\{1\}\times(0,1)^2; \\
        \mathrm{exp}\left[i(\kappa_1x_1+\kappa_2+\kappa_3x_3)\right],  & \text{ if } (x_1,x_2,x_3)\in\Gamma_5:=(0,1)\times\{1\}\times(0,1);\\
        \mathrm{exp}\left[i(\kappa_1x_1+\kappa_2x_2+\kappa_3)\right],  & \text{ if } (x_1,x_2,x_3)\in\Gamma_6:=(0,1)^2\times\{1\}.
    \end{cases}
\end{equation*}

This setup represents a monochromatic plane wave whose oscillations intensify as $\kappa$ increases. Resolving such high-frequency wave fields in 3D remains a significant numerical challenge, particularly regarding the trade-off between discretization density and computational efficiency.

We solve the homogeneous Helmholtz equation to approximate the exact plane-wave solution \eqref{3d-wave-solution} for wavenumbers $\kappa \in \{20, 30\}$ with propagation angles $\phi=\pi/3$ and $\theta=\pi/8$. We compare the performance of FD-MGDL, FD-SGDL, Mscale, and FBPINN. Following the setup in the previous experiment, training points for most methods are defined on a tensor-product grid with $m=60$, while testing points are defined similarly with $\tilde{m}=30$. For the Mscale method, training points are randomly sampled from the domain $\Omega$, maintaining a total sample count equivalent to the other methods divided by the number of scales.


\begin{table}[!htb]\centering
    \small
    \setlength{\tabcolsep}{15pt}
    \begin{threeparttable}
    \caption{Performance comparison of FD-MGDL, FD-SGDL, Mscale and FBPINN for the 3D Helmholtz problem \eqref{3d-Helmholtz-Dirichlet} with the exact solution \eqref{3d-wave-solution}.}
    \label{NumericalResult_for_3dplanewave}
    \centering
    \begin{tabular}{clrrcc} 
        \toprule
         $\kappa$ & Method & Epoch & AC time (s)  & TrRSE & TeRSE \\
        \midrule
        \multirow{4}{*}{$20$} & FD-MGDL & 3,500 & {\bf 5,200} & $\mathbf{4.16\times10^{-2}}$ & $\mathbf{4.29\times10^{-2}}$ \\
        & FD-SGDL & 5,000 & 41,358 & $7.69\times10^{-1}$ & $7.79\times10^{-1}$ \\
        & Mscale & 5,000 & 9,142 & $3.18\times10^{-1}$ & $2.70\times10^{-1}$ \\
        & FBPINN  & 5,000 & 12,950 & $3.04\times10^{-1}$ & $3.04\times10^{-1}$ \\
        \midrule
        \multirow{4}{*}{$30$} & FD-MGDL & 3,500 & {\bf 4,998} & $\mathbf{2.51\times10^{-1}}$ & $\mathbf{2.59\times10^{-1}}$ \\
        & FD-SGDL & 5,000 & 40,978 & $9.19\times10^{-1}$ & $9.28\times10^{-1}$ \\
        & Mscale & 5,000 & 9,051 & $6.66\times10^{-1}$ & $5.91\times10^{-1}$ \\
        & FBPINN  & 5,000 & 12,881 & $7.28\times10^{-1}$ & $7.28\times10^{-1}$ \\
        \bottomrule
    \end{tabular}
    \end{threeparttable}
\end{table}

\begin{figure}[!htb]
\centering
\includegraphics[scale=0.83]{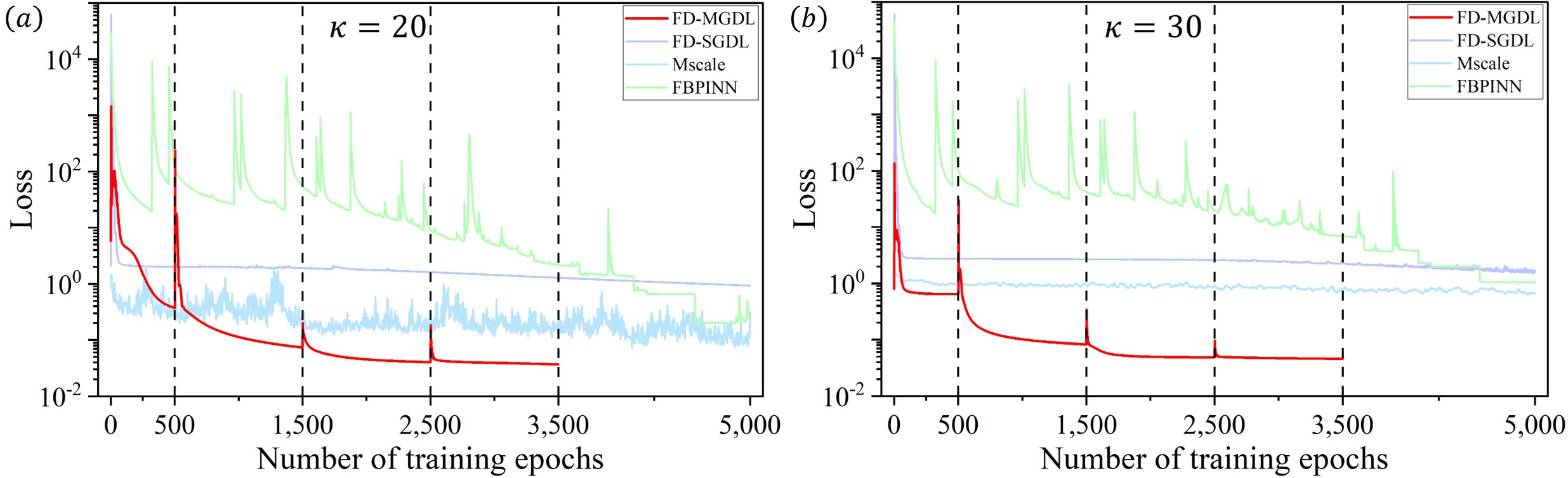}
\caption{Performance comparison of training loss curves of FD-MGDL and three baseline methods (FD-SGDL, Mscale and FBPINN) for the 3D Helmholtz problem \eqref{3d-Helmholtz-Dirichlet} with the exact solution \eqref{3d-wave-solution}: $(a)$ $\kappa=20$; $(b)$ $\kappa=30$.}
\label{3dwave-loss}
\end{figure}

Table \ref{NumericalResult_for_3dplanewave} presents a quantitative comparison of FD-MGDL with FD-SGDL, Mscale, and FBPINN for wavenumbers $\kappa=20$ and $30$. Compared with the three-dimensional sinusoidal solution, the plane wave configuration places stronger emphasis on accurate phase propagation and directional consistency, thereby posing additional challenges for neural-network-based solvers.

For $\kappa=20$, FD-MGDL achieves the highest accuracy among all tested methods, with TrRSE and TeRSE values on the order of $10^{-2}$, while requiring substantially fewer training epochs and significantly lower accumulated computational time. In contrast, FD-SGDL exhibits severe accuracy degradation, with relative errors approaching unity despite its markedly higher computational cost. Mscale and FBPINN improve upon FD-SGDL in terms of efficiency, yet their error levels remain an order of magnitude larger than those of FD-MGDL, indicating insufficient phase accuracy even at this moderate wavenumber.

As the wavenumber increases to $\kappa=30$, the limitations of the baseline methods become even more pronounced. Table \ref{NumericalResult_for_3dplanewave} shows that FD-SGDL, Mscale, and FBPINN all suffer from substantial error growth, with TrRSE and TeRSE values ranging from $\mathcal{O}(10^{-1})$ to $\mathcal{O}(10^{0})$. In contrast, FD-MGDL maintains a clear accuracy advantage, yielding the smallest errors and converging with only 4,998 seconds. Although the absolute error level increases compared with the lower-wavenumber case, FD-MGDL remains the most reliable solver among the tested approaches.

The convergence characteristics are further illustrated in Fig. \ref{3dwave-loss}, which displays the training loss curves for $\kappa=20$ and $30$. As shown in Fig. \ref{3dwave-loss} $(a)$–$(b)$, FD-MGDL consistently demonstrates faster and smoother loss decay, whereas the baseline methods exhibit slow convergence and early stagnation. These trends are consistent with the quantitative results in Table \ref{NumericalResult_for_3dplanewave} and highlight the effectiveness of the multigrade learning strategy in alleviating optimization difficulties associated with three-dimensional oscillatory wave propagation.

In summary, the three-dimensional plane wave experiments confirm that FD-MGDL extends robustly to high-dimensional, phase-sensitive Helmholtz problems. By achieving a favorable balance between accuracy and computational efficiency, FD-MGDL substantially outperforms existing SGDL, Multiscale and PINN-based solvers, further demonstrating its potential for practical three-dimensional wave simulations.

\section{Comparison with Finite Difference Method}\label{FDM}

In this section, we evaluate FD-MGDL against the traditional Finite Difference Method (FDM), a widely established numerical baseline for Helmholtz equations on structured domains \cite{chen2012dispersion,wu2017dispersion,wu2021new}.

\subsection{Methodology and Interpolation}
The FDM solution is computed on the structured grid defined by the training collocation nodes:
\begin{equation*}
    \mathcal{X}_{\mathrm{train}} = \left\{(x_{j_1,1},\dots,x_{j_d,d}) : j_i \in \mathbb{N}_m, \, i \in \mathbb{N}_d\right\}.
\end{equation*}
We employ a standard second-order central difference stencil to discretize the spatial Laplacian operator, solving the resulting global sparse linear system to obtain nodal solution values.

Because the off-grid testing points
\begin{equation*}
    \mathcal{X}_{\mathrm{test}} = \left\{(\tilde{x}_{j_1,1},\dots,\tilde{x}_{j_d,d}) : j_i \in \mathbb{N}_{\tilde{m}}, \, i \in \mathbb{N}_d\right\}
\end{equation*}
do not generally coincide with the discrete grid nodes, off-grid values must be estimated via spatial interpolation. To rigorously evaluate FDM performance under varying post-processing accuracies, we implement both multilinear and multiquadratic interpolation schemes.

Consider an arbitrary testing point $\tilde{\mathbf{x}}=(\tilde{x}_{1},\tilde{x}_{2},\dots,\tilde{x}_d)$ located within the grid cell
\begin{equation*}
\Omega_\mathbf{j} = \prod_{i=1}^{d} [x_{j_i,i}, \, x_{j_i+1,i}], \quad \mathbf{j}=(j_i : i\in\mathbb{N}_d).
\end{equation*}
We define local normalized coordinates $\xi_i = (\tilde{x}_{i} - x_{j_i,i})/h$ for $i \in \mathbb{N}_d$, such that $\boldsymbol{\xi} = (\xi_1,\xi_2,\dots,\xi_d) \in [0,1]^d$. The interpolation schemes are formulated as follows:

\begin{enumerate}
    \item {\bf Multilinear Interpolation:} The reconstructed value $\tilde{u}_{\mathrm{test}}$ at $\tilde{\mathbf{x}}$ is evaluated as a multilinear combination of the $2^d$ cell vertex values:
    \begin{equation*}
        \tilde{u}_{\mathrm{test}} = \sum_{\boldsymbol{\alpha}\in\{0,1\}^d} \left( \prod_{i=1}^{d} \left[ \alpha_i \xi_i + (1-\alpha_i)(1-\xi_i) \right] \right) \tilde{u}_{\mathbf{j}+\boldsymbol{\alpha}},
    \end{equation*}
    where $\tilde{u}_{\mathbf{j}+\boldsymbol{\alpha}}$ denotes the nodal FDM solution at $(x_{j_1+\alpha_1,1},\dots,x_{j_d+\alpha_d,d})$.

    \item {\bf Multiquadratic Interpolation:} To achieve higher-order post-processing reconstruction, $\tilde{u}_{\mathrm{test}}$ is evaluated via the tensor product of one-dimensional quadratic Lagrange polynomials over $3^d$ local stencil nodes:
    \begin{equation*}
        \tilde{u}_{\mathrm{test}} = \sum_{\mathbf{p}\in\{0,1,2\}^d} \left( \prod_{i=1}^{d} \mathcal{L}_{p_i}(\xi_i) \right) \tilde{u}_{\mathbf{j}+\mathbf{p}},
    \end{equation*}
    where $\mathbf{p}=(p_1,\dots,p_{d})$ indexes the local node topology. The 1D basis functions $\mathcal{L}_k(\xi)$ defined on reference coordinates $\{\xi_0=0, \, \xi_1=1/2, \, \xi_2=1\}$ take the form:
    \begin{equation*}
        \mathcal{L}_k(\xi) = \prod_{\substack{r=0 \\ r\neq k}}^2 \frac{\xi-\xi_r}{\xi_k-\xi_r}. 
    \end{equation*}
\end{enumerate}

\subsection{Numerical Results and Analysis}

Tables \ref{FDM_for_2dPDE}--\ref{FDM_for_3dwave} summarize the comparative performance between classical FDM and FD-MGDL across the 2D and 3D benchmark problems. For each configuration, we report the training relative structural error ($\mathrm{TrRSE}$) evaluated strictly at grid nodes alongside the off-grid testing error ($\mathrm{TeRSE}$) reconstructed via bilinear/trilinear and bi-/triquadratic interpolations.

While FDM completes linear system solves rapidly on fixed low-resolution grids, the results highlight critical numerical limitations of traditional discrete solvers when handling high-frequency wave fields. The key advantages of FD-MGDL are detailed below:

\begin{enumerate}
    \item {\bf Solution Fidelity vs. Offline Computational Investment:}
    While FDM computes direct linear solves in seconds, its computational efficiency is compromised by severe accuracy degradation as the wavenumber increases. For example, in Table \ref{FDM_for_2dPDE} at $\kappa=100$, FDM completes in $0.87$~s but yields a physically corrupted prediction with a $27.1\%$ testing error ($\mathrm{TeRSE} = 2.71\times 10^{-1}$). In contrast, FD-MGDL achieves high-precision resolution with an error of $0.55\%$ ($\mathrm{TeRSE} = 5.56\times 10^{-3}$). A fast computational solve offers little practical value when the solution is dominated by numerical dispersion; FD-MGDL purposefully invests offline training time to deliver physically accurate predictions where classical discretizations fail.

    \item {\bf Robustness to Resonant Ill-Conditioning:}
    The 2D homogeneous Dirichlet problem admits resonant wavenumbers $\kappa_{m,n} := \pi\sqrt{m^2+n^2}$ ($m,n \in \mathbb{Z}^+$). As $\kappa$ approaches a resonant frequency, the global discrete Helmholtz matrix $A$ in FDM becomes severely ill-conditioned ($\kappa(A) \to \infty$), causing dramatic solution breakdown. In Table \ref{FDM_for_2dPDE}, $\kappa=100 \approx \kappa_{22,23}$ and $\kappa=200 \approx \kappa_{45,45}$ represent near-resonant states where FDM error surges to $26.9\%$ and $68.8\%$, respectively. Conversely, FD-MGDL avoids global matrix inversions entirely, optimizing network parameters through local grade-wise residual minimization. Consequently, FD-MGDL suppresses resonant ill-conditioning, achieving up to a \textbf{50-fold error reduction} at $\kappa=100$ and demonstrating robust stability across resonant frequencies.

    \item {\bf Continuous Functional Representation vs. Two-Stage Interpolation Errors:}
    FDM outputs discrete pointwise values restricted to grid nodes, relying on spatial interpolation to estimate off-grid locations. As demonstrated in Table \ref{FDM_for_2dPDE} at $\kappa=50$, even when FDM attains a small nodal error ($\mathrm{TrRSE} = 5.48\times 10^{-4}$), off-grid interpolation degrades the testing error by over an order of magnitude ($\mathrm{TeRSE} = 8.84\times 10^{-3}$), with higher-order quadratic schemes yielding negligible relief. In contrast, FD-MGDL parametrizes a mesh-free, continuously differentiable solution manifold $u_\theta(\mathbf{x})$. Off-grid evaluation requires no post-processing interpolation, ensuring that testing errors strictly align with training errors ($\mathrm{TrRSE} \approx \mathrm{TeRSE}$) across all test cases.

\end{enumerate}

\begin{table}[!htb]\centering
    \small
    \setlength{\tabcolsep}{15pt}
    \begin{threeparttable}
    \caption{Performance comparison of FDM and FD-MGDL for the 2D Helmholtz problem \eqref{2d-Helmholtz-Dirichlet} with the exact solution \eqref{2d-sin-solution}.}
    \label{FDM_for_2dPDE}
    \centering
    \begin{tabular}{clcccc} 
        \toprule
         $\kappa$  &  Method & Time (s) & TrRSE &  \multicolumn{2}{c}{TeRSE} \\
          & &  &  & Bilinear & Biquadratic\\
        \midrule
        \multirow{2}{*}{$50$} 
        & FDM & 0.22 & $5.48\times 10^{-4}$ & $8.84\times 10^{-3}$ &$8.80\times 10^{-3}$ \\
        & FD-MGDL & 6,272 & $\mathbf{5.37\times 10^{-4}}$ & \multicolumn{2}{c}{$\mathbf{5.37\times 10^{-4}}$} \\
        \midrule
        \multirow{2}{*}{$100$} 
        & FDM & 0.87 & $2.69\times 10^{-1}$ & $2.71\times 10^{-1}$ &$2.71\times 10^{-1}$ \\
        & FD-MGDL & 9,196 & $\mathbf{5.54\times 10^{-3}}$ & \multicolumn{2}{c}{$\mathbf{5.56\times 10^{-3}}$} \\
        \midrule
        \multirow{2}{*}{$150$} 
        & FDM & 2.27 & $2.03\times 10^{-2}$ & $2.54\times 10^{-2}$ &$2.63\times 10^{-2}$ \\
        & FD-MGDL & 21,861 & $\mathbf{1.31\times 10^{-2}}$ & \multicolumn{2}{c}{$\mathbf{1.34\times 10^{-2}}$} \\
        \midrule
        \multirow{2}{*}{$200$} 
        & FDM & 2.26 & $6.88\times 10^{-1}$ & $6.88\times 10^{-1}$ & $6.86\times 10^{-1}$ \\
        & FD-MGDL & 17,776 & $\mathbf{3.92\times 10^{-1}}$ & \multicolumn{2}{c}{$\mathbf{3.92\times 10^{-1}}$} \\
        \bottomrule
    \end{tabular}
    \end{threeparttable}
\end{table}

\begin{table}[!htb]\centering
    \small
    \setlength{\tabcolsep}{15pt}
    \begin{threeparttable}
    \caption{Performance comparison of FDM and FD-MGDL for the 2D Helmholtz problem \eqref{2d-Helmholtz-Dirichlet} with the exact solution \eqref{2d-wave-solution}.}
    \label{FDM_for_2dWave}
    \centering
    \begin{tabular}{clcccc} 
        \toprule
         $\kappa$  &  Method & Time (s) & TrRSE &  \multicolumn{2}{c}{TeRSE} \\
         & &  &  & Bilinear & Biquadratic\\
        \midrule
        \multirow{2}{*}{$50$} 
        & FDM & 0.86 & $4.25\times 10^{-4}$ & $4.24\times 10^{-4}$ &$4.22\times 10^{-4}$ \\
        & FD-MGDL & 2,540 & $\mathbf{3.40\times 10^{-4}}$ & \multicolumn{2}{c}{$\mathbf{3.42\times 10^{-4}}$} \\
        \midrule
        \multirow{2}{*}{$100$} 
        & FDM & 2.86 & $2.61\times 10^{-1}$ & $2.60\times 10^{-1}$ &$2.60\times 10^{-1}$ \\
        & FD-MGDL & 9,493 & $\mathbf{2.82\times 10^{-3}}$ & \multicolumn{2}{c}{$\mathbf{2.83\times 10^{-3}}$} \\
        \midrule
        \multirow{2}{*}{$150$} 
        & FDM & 6.89 & $1.26\times 10^{-2}$ & $1.81\times 10^{-2}$ &$1.83\times 10^{-2}$ \\
        & FD-MGDL & 6,894 & $\mathbf{8.07\times 10^{-3}}$ & \multicolumn{2}{c}{$\mathbf{8.07\times 10^{-3}}$} \\
        \midrule
        \multirow{2}{*}{$200$} 
        & FDM & 6.79 & $4.47\times 10^{-1}$ & $4.49\times 10^{-1}$ & $4.52\times 10^{-1}$ \\
        & FD-MGDL & 10,462 & $\mathbf{5.20\times 10^{-2}}$ & \multicolumn{2}{c}{$\mathbf{5.21\times 10^{-2}}$} \\
        \bottomrule
    \end{tabular}
    \end{threeparttable}
\end{table}

\begin{table}[!htb]\centering
    \small
    \setlength{\tabcolsep}{15pt}
    \begin{threeparttable}
    \caption{Performance comparison of FDM and FD-MGDL for the 3D Helmholtz problem \eqref{3d-Helmholtz-Dirichlet} with the exact solution \eqref{3d-sin-solution}.}
    \label{FDM_for_3dsin}
    \centering
    \begin{tabular}{clcccc} 
        \toprule
         $\kappa$ & Method & Time (s)  & TrRSE &  \multicolumn{2}{c}{TeRSE} \\
         & &  &  & Trilinear & Triquadratic\\
        \midrule
        \multirow{2}{*}{$20$} 
        & FDM & 236.60 & $3.18\times 10^{-2}$ & $7.77\times 10^{-2}$ &$8.05\times 10^{-2}$ \\
        & FD-MGDL & 10,828 & $\mathbf{2.74\times 10^{-2}}$ & \multicolumn{2}{c}{$\mathbf{3.31\times 10^{-2}}$} \\
        \midrule
        \multirow{2}{*}{$30$} 
        & FDM & 235.58 & $8.17\times 10^{-3}$ & $8.15\times 10^{-2}$ &$8.30\times 10^{-2}$ \\
        & FD-MGDL & 8,038 & $\mathbf{1.69\times 10^{-2}}$ & \multicolumn{2}{c}{$\mathbf{1.81\times 10^{-2}}$} \\
        \midrule
        \multirow{2}{*}{$40$} 
        & FDM & 234.49 & $5.85\times 10^{-1}$ & $5.88\times 10^{-1}$ &$6.08\times 10^{-1}$ \\
        & FD-MGDL & 12,810 & $\mathbf{3.13\times 10^{-1}}$ & \multicolumn{2}{c}{$\mathbf{3.28\times 10^{-1}}$} \\
        \midrule
        \multirow{2}{*}{$50$} 
        & FDM & 237.44 & $3.79\times 10^{-1}$ & $4.83\times 10^{-1}$ & $5.26\times 10^{-1}$ \\
        & FD-MGDL & 8,646 & $\mathbf{3.48\times 10^{-1}}$ & \multicolumn{2}{c}{$\mathbf{3.91\times 10^{-1}}$} \\
        \bottomrule
    \end{tabular}
    \end{threeparttable}
\end{table}

\begin{table}[!htb]\centering
    \small
    \setlength{\tabcolsep}{15pt}
    \begin{threeparttable}
    \caption{Performance comparison of FDM and FD-MGDL for the 3D Helmholtz problem \eqref{3d-Helmholtz-Dirichlet} with the exact solution \eqref{3d-wave-solution}.}
    \label{FDM_for_3dwave}
    \centering
    \begin{tabular}{clcccc} 
        \toprule
         $\kappa$  &  Method & Time (s) & TrRSE &  \multicolumn{2}{c}{TeRSE} \\
         & &  &  & Trilinear & Triquadratic\\
        \midrule
        \multirow{2}{*}{$20$} 
        & FDM & 466.54 & $ 6.10\times10^{-2} $ & $ 1.01\times10^{-1} $ &$ 1.03\times10^{-1} $ \\
        & FD-MGDL & 5,200 & $\mathbf{4.16\times 10^{-2}}$ & \multicolumn{2}{c}{$\mathbf{4.29\times 10^{-2}}$} \\
        \midrule
        \multirow{2}{*}{$30$} 
        & FDM & 471.72 & $ 3.45\times10^{-1} $ & $ 4.02\times10^{-1} $ &$ 4.23\times10^{-1} $ \\
        & FD-MGDL & 4,998 & $\mathbf{2.51\times 10^{-1}}$ & \multicolumn{2}{c}{$\mathbf{2.59\times 10^{-1}}$} \\
        \bottomrule
    \end{tabular}
    \end{threeparttable}
\end{table}

\subsection{The Role of the Grid in FD-MGDL}
It is essential to clarify that in FD-MGDL, the spatial grid discretizes the \emph{loss function}, rather than constraining the \emph{solution space}:
\begin{itemize}
\item {\bf Training Phase:} The grid functions as a structured collocation sampler to construct discrete PDE residuals. Finite difference stencils efficiently approximate the Laplacian operator, bypassing the computational overhead and higher-order derivative vanishing associated with automatic differentiation.
\item {\bf Inference Phase:} Once trained, the solution is represented as a continuous function $s_L^*(\mathbf{x})$ parametrized by the multi-grade neural architecture. Forward evaluation predicts values at arbitrary spatial coordinates instantaneously without requiring spatial grids or post-processing interpolation.
\end{itemize}

While structured rectangular domains provide a standardized computational testbed for establishing algorithmic efficiency, the FD-MGDL framework is readily extensible to irregular geometries via two primary methodologies:
\begin{enumerate}
    \item \textbf{Coordinate Transformation and Curvilinear Grids:} Complex physical domains can be mapped onto standard rectangular computational domains via smooth coordinate transformations. Applying the chain rule to the PDE operators allows high-order structured stencils to be evaluated directly on the uniform computational grid.
    \item \textbf{Generalized Finite Difference (GFD) Methods:} For arbitrary or highly complex geometries where global grid mappings are difficult to construct, the framework can adopt unstructured point clouds. GFD schemes construct local Taylor expansions over neighboring point clusters, generating discrete derivative weights on arbitrary domains without requiring a structured grid mesh.
\end{enumerate}

\section{Higher-Order Accuracy Discretizations of Second-Order Derivatives}\label{FD_extensions}

A key limitation of the FD-MGDL implementation discussed thus far lies in the operator discretization error: the overall solution accuracy remains bounded by the second-order truncation error of the discrete differential stencils. To fully leverage the expressive capacity of the multi-grade neural architecture, this section incorporates finite difference schemes with higher-order accuracy to approximate second partial derivatives with significantly enhanced spatial precision.

Specifically, the second derivative can be discretized with fourth-order accuracy using the five-point central stencil:
\begin{equation*}
    u''(x) = \frac{1}{12h^2}\left[-u(x-2h) + 16u(x-h) - 30u(x) + 16u(x+h) - u(x+2h)\right] + \mathcal{O}(h^4).
\end{equation*}
We replace the baseline second-order stencil with this fourth-order accuracy formulation for the high-wavenumber test case ($\kappa=100$) in Section~\ref{2d-sin}, keeping all network architectures and training hyperparameters fixed.

\begin{table}[!htb]
    \centering
    \small
    \setlength{\tabcolsep}{14pt}
    \begin{threeparttable}
    \caption{Comparison for  FD loss formulations with second- and fourth-order accuracy ($\kappa=100$).}
    \label{TeRSE_FD_Order}
    \centering
    \begin{tabular}{l|cc|cc} 
        \toprule
        \multirow{2}{*}{Method} & \multicolumn{2}{c|}{2nd-Order Accuracy FD Loss} & \multicolumn{2}{c}{4th-Order Accuracy FD Loss} \\
        \cmidrule(lr){2-3} \cmidrule(lr){4-5}
         & TeRSE & AC time (s) & TeRSE & AC time (s)\\
        \midrule
        FD-MGDL  & $5.56\times10^{-3}$ & 9,196 & $\mathbf{3.20\times10^{-4}}$ & 9,217 \\
        FD-SGDL  & $1.87\times10^{-2}$ & 47,792 & $5.37\times10^{-3}$ & 47,992 \\
        \bottomrule
    \end{tabular}
    \end{threeparttable}
\end{table}

As reported in Table~\ref{TeRSE_FD_Order}, adopting the fourth-order accuracy formulation substantially improves solution precision, reducing the testing error ($\mathrm{TeRSE}$) of FD-MGDL from $5.56\times 10^{-3}$ to $3.20\times 10^{-4}$, a $17.4$-fold gain. Although FD-SGDL also benefits from the higher-order accuracy formulation (yielding a $3.5$-fold error reduction), the improvement is significantly more pronounced within the multi-grade framework. Crucially, for both FD-MGDL and FD-SGDL, this heightened accuracy is achieved with virtually no additional computational cost.

It is worth noting the distinct practical feasibility of incorporating higher-order accuracy discretizations in MGDL compared to traditional finite difference solvers. In classical FD methods, extending stencils to a higher order accuracy introduces new implementation and computational challenges: wider stencils expand matrix bandwidth, substantially inflating linear system complexity, and complicate boundary treatments, often requiring specialized ghost nodes or lower-order boundary stencils that impair global accuracy. In contrast, higher-order accuracy stencils are seamlessly integrated into MGDL. Because MGDL optimizes residual loss functions rather than performing global matrix inversions, wider stencils do not inflate solver computational graph complexity. Furthermore, boundary conditions in MGDL are enforced via decoupled loss penalties rather than structural stencil modifications, bypassing the boundary-induced instabilities characteristic of traditional higher-order accuracy FD schemes.

These findings confirm that higher-order accuracy derivative approximations can be seamlessly integrated into the loss formulation to achieve superior precision in high-frequency wave modeling, while further highlighting the rich expressive capacity and flexibility of the MGDL framework. A comprehensive and systematic investigation into integrating MGDL with a broader spectrum of higher-order accuracy derivative approximations is deferred to future work, where we will thoroughly evaluate the trade-offs between operator fidelity and computational efficiency.


\section{Concave Model}\label{concave}

In this section, we apply FD-MGDL to the concave velocity model governed by the two-dimensional Helmholtz equation \cite{ren2009seismic}, which involves variable wavenumbers in $\mathbb{R}^2$. Testing a concave velocity profile is scientifically significant because it introduces caustics and multipathing effects, where wave energy is focused into localized regions. This creates a high-stress scenario for numerical solvers, as they must accurately capture sharp phase transitions and amplitude spikes without introducing artificial oscillations.

Since analytical solutions for such heterogeneous media are generally unavailable, numerical methods like finite difference and finite element methods are essential for validating the robustness of the solver. To truncate the unbounded domain while preventing non-physical reflections from contaminating the focused wavefield, absorbing boundary conditions are imposed; we adopt the perfectly matched layer (PML) introduced by B\'{e}renger \cite{berenger1994perfectly}, leading to the Helmholtz equation with PML.

For this benchmark, our FD-MGDL method is built on the classical 5-point finite difference scheme, which is known to suffer from significant numerical dispersion. Since the exact solution is unavailable for this concave velocity model, we use the optimal 9-point scheme of Chen et al. \cite{chen2013optimal} as a high-fidelity reference. This 9-point operator is designed to reduce dispersion and improve accuracy, serving as a reliable surrogate for the exact wavefield.

We compare the wavefields produced by the standard 5-point scheme, FD-MGDL, and the optimal 9-point reference to assess how effectively FD-MGDL reduces the dispersion errors of the 5-point baseline.

Specifically, we consider time-harmonic wave propagation in a heterogeneous medium governed by the 2D Helmholtz equation. For $(x,y) \in \mathbb{R}^2$ with spatially varying velocity $v(x,y)$ and source term $g(x,y)$, the wavefield $u(x,y)$ satisfies:
\begin{equation*}(\Delta + \kappa^2)u(x,y) = g(x,y),
\end{equation*}
where $\kappa(x,y) = \omega/v(x,y)$ is the spatially dependent wavenumber and $\omega$ is the angular frequency.

The concave velocity model, illustrated in Figure \ref{velocity-structure}, is defined on the square domain
\begin{equation*}
    \Omega := [0, 2000]\text{ m } \times [0, 2000]\text{ m}.
\end{equation*}
The model features three piecewise constant velocity layers: $1500\text{ m/s}$, $2000\text{ m/s}$, and $2500\text{ m/s}$, ordered from top to bottom. A point source is positioned at $(x_s, y_s) = (1000, 800)$, characterized by
\begin{equation*}
    g(x,y) := \delta(x-x_s, y-y_s)\widehat{R}(f, f_0),
\end{equation*}
where $\widehat{R}(f, f_0)$ is the frequency-domain Ricker wavelet with a dominant frequency $f_0 = 25\text{ Hz}$:
\begin{equation*}
    \widehat{R}(f,f_0) = \int_{-\infty}^{+\infty}(1-2\pi^2f_0^2t^2)\exp({-\pi^2f_0^2t^2})\exp({-i2\pi ft})\mathrm{d}t.
\end{equation*}
To determine the wavefield for this configuration, we solve the following frequency-domain equation:
\begin{equation*}\label{Concave_Model}
    \Delta u + \kappa^2 u = \delta(x-x_s, y-y_s)\widehat{R}(f, f_0), \quad \text{in } \mathbb{R}^2.
\end{equation*}

\begin{figure}[!htb]
\centering
\includegraphics[scale=0.35]{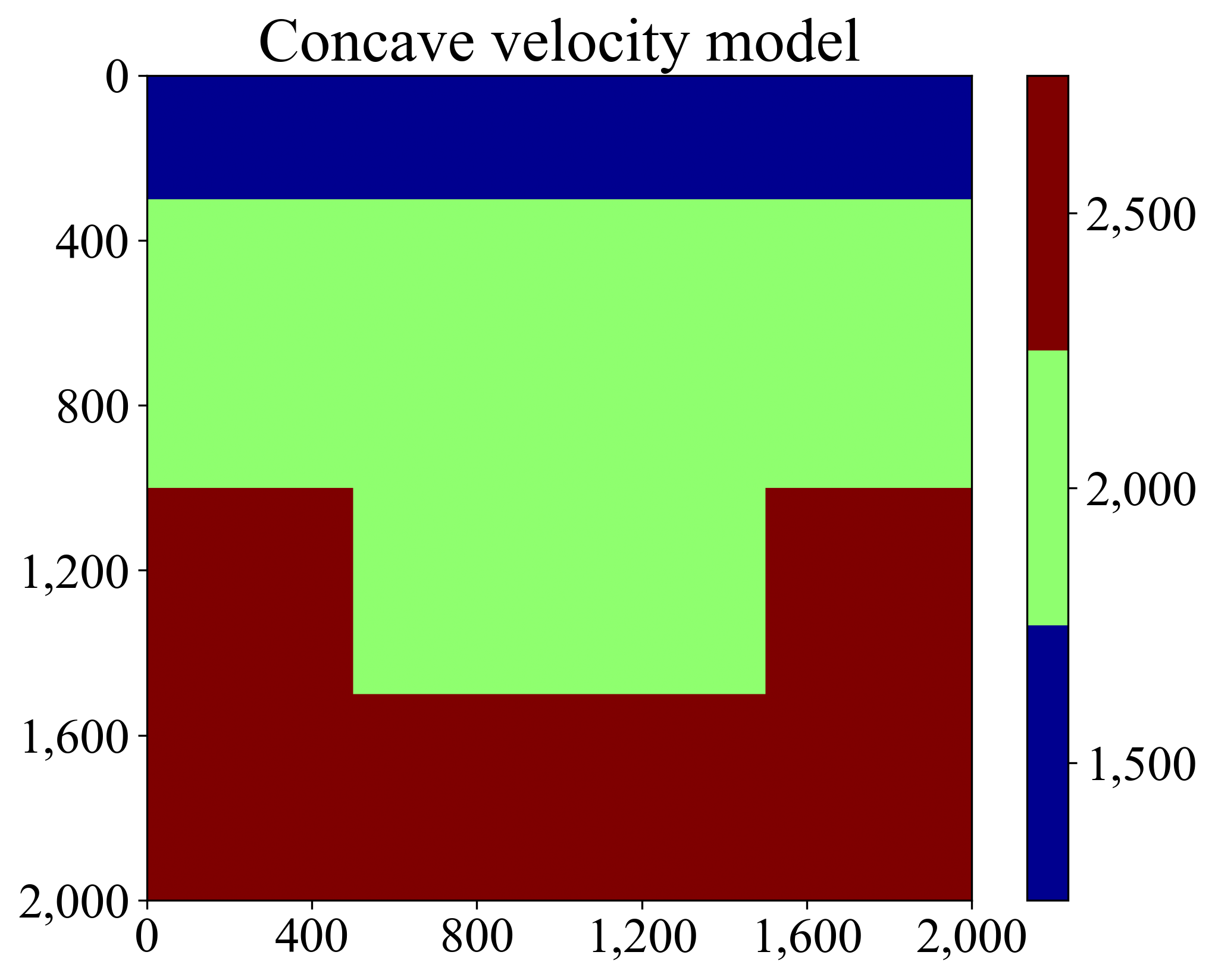}
\caption{Three-layer concave velocity structure.}
\label{velocity-structure}
\end{figure}

To truncate the computational domain and suppress artificial reflections, we apply the PML technique. The resulting Helmholtz equation with PML is given by:
\begin{equation*}\label{Helm_pml}
    \frac{\partial}{\partial x}\left(A\frac{\partial u}{\partial x}\right)+\frac{\partial}{\partial y}\left(B\frac{\partial u}{\partial y}\right)+C\kappa^2u = \delta(x-x_s,y-y_s)\widehat{R}(f,f_0),
\end{equation*}
where the complex-valued coefficients $A$, $B$, and $C$ are defined in terms of the stretching functions $e_x$ and $e_y$:
\begin{equation*}
    A := \frac{e_y}{e_x}, \quad B := \frac{e_x}{e_y}, \quad C := e_x e_y.
\end{equation*}
The stretching functions $e_x$ and $e_y$ account for wave attenuation and are defined as:
\begin{equation*}
    e_x := 1 + i\frac{\sigma_x}{\omega}, \quad e_y := 1 + i\frac{\sigma_y}{\omega},
\end{equation*}
where $\omega := 2\pi f$ is the angular frequency. To minimize numerical reflections at the interface, the damping profiles $\sigma_x$ and $\sigma_y$ are chosen as differentiable functions. Specifically, for the $x$-direction:
\begin{equation*}
    \sigma_x := 
    \begin{cases}
        2\pi a_0 f_0 \left( \frac{l_x}{L_{PML}} \right)^2, & \text{inside the PML},\\
        0, & \text{outside the PML},
    \end{cases}
\end{equation*}
where $f_0$ is the dominant source frequency, $L_{PML}$ is the layer thickness, and $l_x$ represents the distance from the point $(x,y)$ to the interior-PML interface. The scaling parameter $a_0$ is set to $1.79$ \cite{zeng2001application}. The function $\sigma_y$ is defined analogously for the $y$-direction.

\begin{figure}[!htb]
\centering
\includegraphics[scale=0.9]{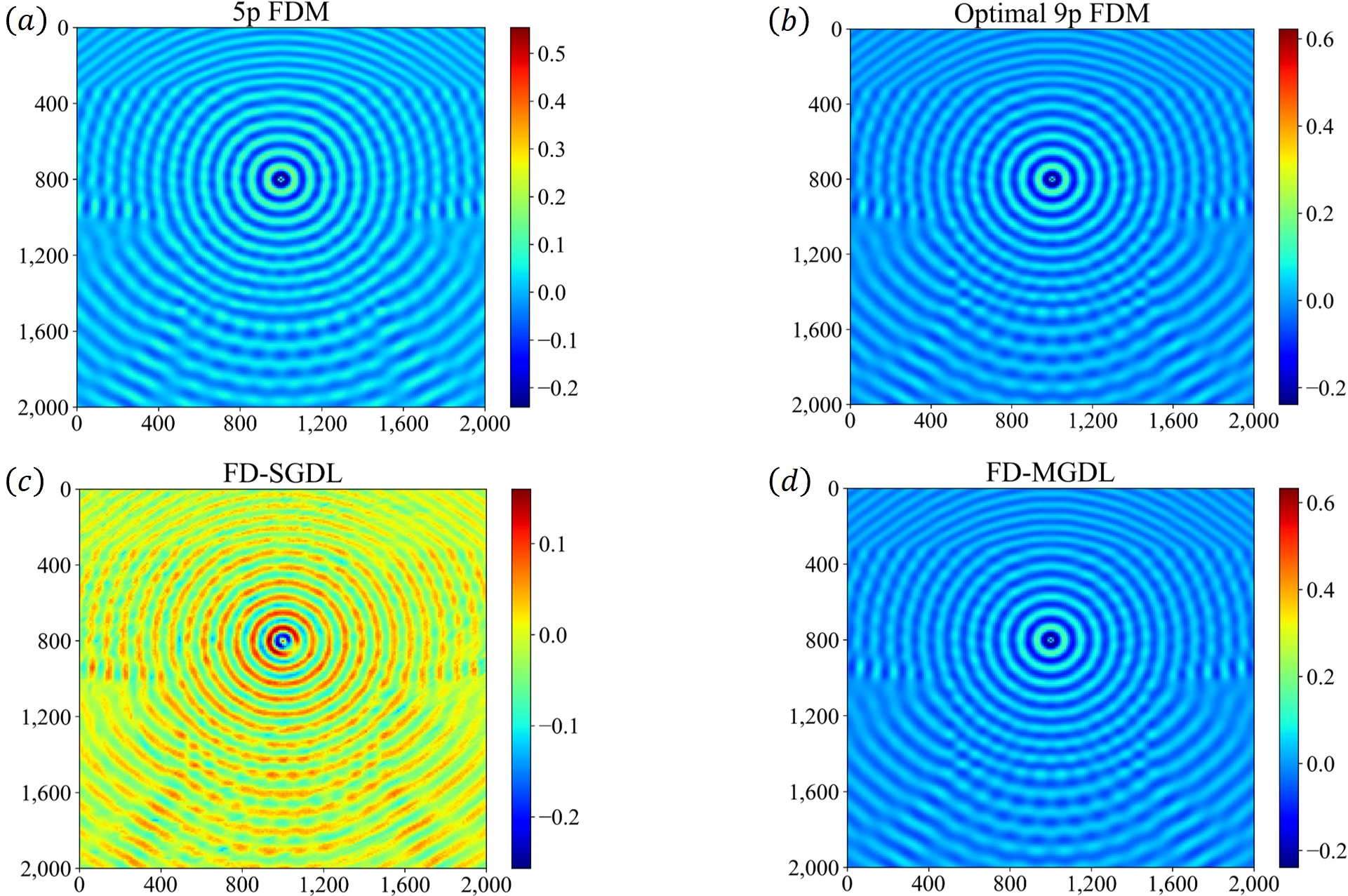}
\caption{Wavefield comparison for the concave velocity model. \textnormal{Reference wavefields computed using the 5-point $(a)$ and optimal 9-point $(b)$ finite-difference schemes, and wavefields predicted by FD-SGDL $(c)$ and FD-MGDL $(d)$.}}
\label{concave-visualization}
\end{figure}

\begin{figure}[!htb]
\centering
\includegraphics[scale=0.34]{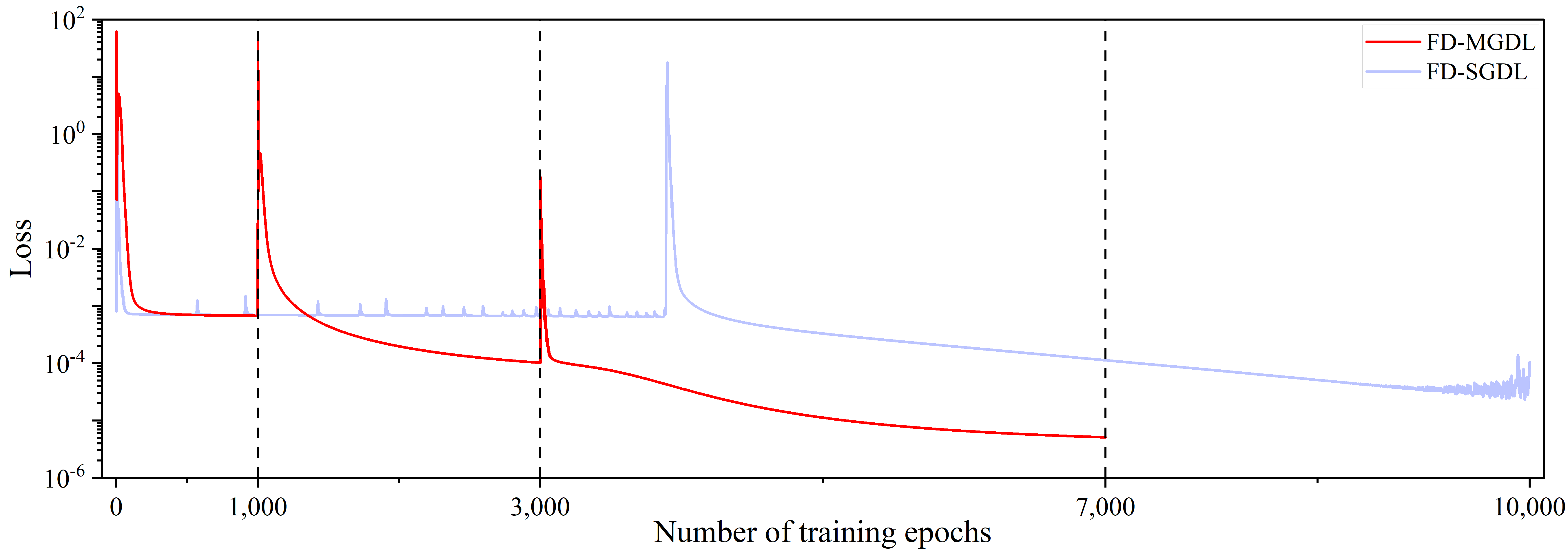}
\caption{Training loss curves of FD-SGDL and FD-MGDL for the concave velocity model.}
\label{concave-loss}
\end{figure}

\begin{figure}[!htb]
\centering
\includegraphics[scale=0.83]{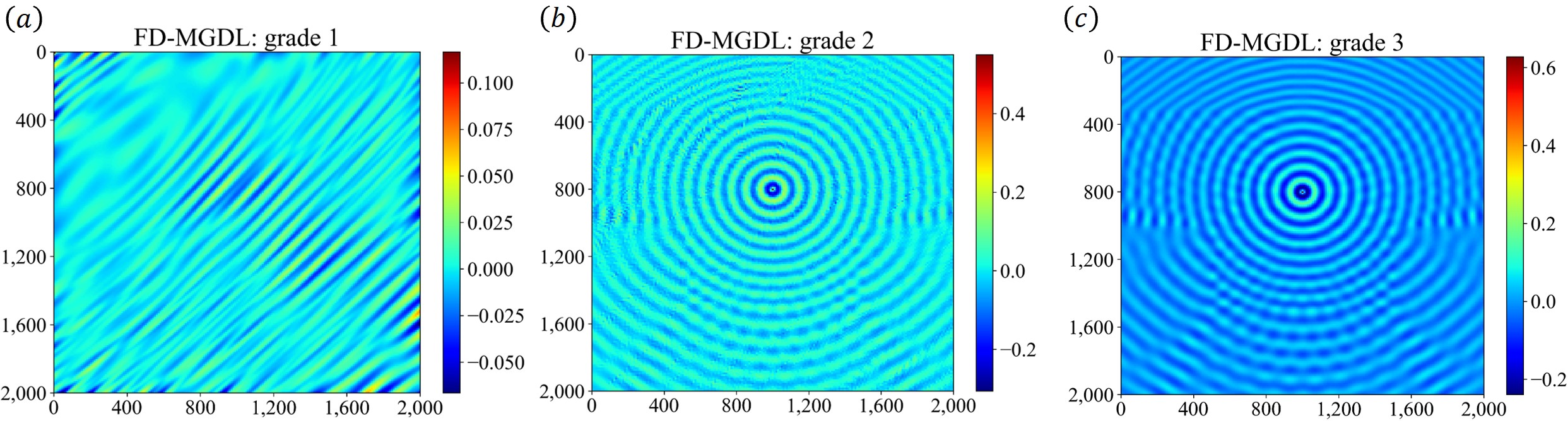}
\caption{Wavefields predicted by FD-MGDL for the concave velocity model at successive grades: grade 1 $(a)$, grade 2 $(b)$, and grade 3 $(c)$.}
\label{concave-MGDL}
\end{figure}

We evaluate the FD-MGDL-predicted wavefield  by comparing it with solutions computed using two finite-difference frequency-domain (FDFD) methods: the classical 5-point scheme and the optimal 9-point scheme of \cite{chen2013optimal}.

On a $201\times201$ grid, the real part of the monofrequency wavefield at $f = 25\ \mathrm{Hz}$ obtained with the 5-point and optimal 9-point schemes is shown in Figures \ref{concave-visualization} $(a)$ and $(b)$, respectively.
Figures \ref{concave-visualization} $(c)$ and $(d)$ display the wavefields produced by FD-SGDL and FD-MGDL. Figure \ref{concave-loss} shows the training loss versus epochs, confirming the convergence of FD-MGDL. Figures \ref{concave-MGDL} (a)-(c) present the FD-MGDL predictions at grades 1–3, illustrating the progressive refinement across grades.

Because the optimal 9-point scheme achieves high accuracy and effectively reduces numerical dispersion, especially at large wavenumbers, we use it as the reference solution. It clearly captures upward and downward incident waves, transmitted waves, and reflections within the middle velocity layer consistent with Snell’s law. As seen in Figure \ref{concave-visualization}, FD-MGDL closely matches the 9-point reference, whereas the 5-point scheme and FD-SGDL show noticeable discrepancies. These results demonstrate the improved accuracy of FD-MGDL.

We also compare computational efficiency. The total training time of FD-MGDL is 2,399 s, significantly shorter than FD-SGDL (10,150 s). This substantial reduction, together with improved accuracy, highlights the effectiveness and efficiency of FD-MGDL for wavefield simulation.

We briefly note the potential for singularities at the ``brown corners" in Figure \ref{velocity-structure}, where the three constant-velocity layers intersect. For the Helmholtz equation with piecewise constant coefficients, the solution must maintain continuity of both the wavefield and its normal flux across all interfaces. Consequently, despite the velocity discontinuity at these junctions, the wavefield remains continuous and bounded. Furthermore, because the source is a smooth Ricker wavelet located away from these corners, no additional singularities are introduced.

In summary, for the concave velocity benchmark, FD-MGDL significantly enhances wavefield accuracy compared with the 5-point scheme and FD-SGDL, closely matching the 9-point reference while effectively mitigating numerical dispersion. The grade-wise results show progressive refinement, and the method achieves higher accuracy with significantly shorter training time, demonstrating both effectiveness and efficiency for wavefield simulation in heterogeneous media.

\section{Conclusion}\label{conclusion}

This paper introduced Multi-Grade Deep Learning leveraged by Finite Difference (FD-MGDL), a novel adaptive framework designed to overcome the persistent challenges of solving high-frequency Helmholtz equations. By synergizing the well-conditioned algebraic structure of finite difference operators with the adaptive representation capacity of multi-grade neural architectures, the proposed approach effectively mitigates the spectral bias and optimization stiffness characteristic of standard Physics-Informed Neural Networks (PINNs).

The core contributions and findings of this study are summarized as follows:
\begin{itemize}
    \item \textbf{Hybrid Finite-Difference and Neural-Basis Formulation:} 
    FD-MGDL establishes a hybrid approximation paradigm wherein finite difference discretizations and neural networks play complementary roles. Discrete finite difference stencils approximate differential operators in the loss formulation, completely bypassing the computational overhead and higher-order derivative attenuation associated with automatic differentiation. Concurrently, the unknown solution is parameterized by a neural network, acting as a global, non-linear continuous basis function. This offers a significantly richer and more flexible approximation space than the locally supported polynomial representations underlying conventional finite difference methods, combining discrete stencil stability with continuous functional representation.

    \item \textbf{Adaptive Grade Refinement and Decoupled Optimization:} 
    The MGDL strategy systematically enhances solution accuracy by introducing shallow neural sub-networks grade-by-grade. Unlike Single-Grade Deep Learning (SGDL), which simultaneously optimizes all parameters of a single deep network and suffers from severe parameter coupling, MGDL decomposes the global learning task into a sequence of localized residual-correction subproblems. At each grade, only the newly added shallow network is trained while previously optimized components remain frozen, substantially reducing optimization complexity and stabilizing gradient descent. This progressive residual-learning mechanism inherently counteracts spectral bias: early grades capture dominant, low-frequency wave features, while subsequent grades sequentially isolate and learn fine, high-frequency residual details. Furthermore, our theoretical analysis establishes a rigorous monotonicity property guaranteeing that the grade-end training loss is non-increasing, providing a solid theoretical foundation for progressive model refinement and adaptive stopping criteria.

    \item \textbf{Superior Performance in High-Frequency Regimes:} 
    Comprehensive numerical benchmarks in 2D and 3D domains across wavenumbers up to $\kappa=200$ demonstrate that FD-MGDL consistently outperforms state-of-the-art neural PDE solvers (including Mscale, FBPINN, and SIREN) as well as classical finite difference schemes. Notably, FD-MGDL maintains stable convergence and spatially controlled error distributions in high-wavenumber and near-resonant regimes where traditional discrete solvers suffer from severe dispersion pollution and matrix ill-conditioning.

    \item \textbf{Robustness in Inhomogeneous Media:} 
    Applying FD-MGDL to heterogeneous velocity benchmarks (such as a concave wave-speed profile) confirms its capacity to resolve complex physical wave dynamics, including caustics and wave focusing. The framework significantly surpasses standard finite difference schemes in numerical stability and wavefield resolution across complex media.
\end{itemize}

Crucially, FD-MGDL preserves the mesh-free evaluation advantage characteristic of neural solvers. While finite difference stencils are employed on a structured grid to formulate derivative residuals during training, the resulting multi-grade architecture parametrizes a continuous function $u_\theta(\mathbf{x})$ over the entire physical domain. Consequently, the trained model can be evaluated instantaneously at any arbitrary coordinate without post-processing spatial interpolation.





\section*{Acknowledgments}
Rui Wang is supported in part by the Natural Science Foundation of China under grants 12571562 and 12171202. Tingting Wu is supported by the Natural Science Foundation of Shandong Province of China under grant ZR2021MA 049. Yuesheng Xu is supported in part by the US National Science Foundation under grant DMS-2208386.

\appendix
{\section{Resource Analysis and Statistical Stability}}
In this appendix, we provide supplementary details on the efficiency analysis and the statistical robustness with respect to random initialization.


\subsection{Computational Complexity and Memory Efficiency}
To further evaluate the computational efficiency of the MGDL framework, we analyze the computational complexity and memory consumption during the training process. The computational benefits of multi-grade architecture have also been investigated in related implicit-representation problems, including training-data-free image restoration \cite{liao2026mgspair}. The peak memory usage (Peak Mem.) is recorded at each optimization stage, including the memory occupied by network parameters, intermediate activations, gradient computation, and temporary tensors generated during backpropagation. In addition, we report the parameter counts (Para.) and total optimization work (Total Opt.), defined as the product of the number of trainable parameters and training epochs, to provide a hardware-independent measurement of the training complexity. 
As a representative benchmark, we consider the experiment in Section~\ref{2d-sin} at wavenumber $\kappa=100$, where FD-MGDL and FD-SGDL are evaluated under identical computational environments. This wavenumber is specifically chosen for its proximity to a resonant frequency, which introduces severe challenges for numerical solvers.

\begin{table}[!htb]
    \centering
    \setlength{\tabcolsep}{4pt}
    \begin{threeparttable}
    \caption{Comparison of computational complexity and memory consumption between FD-MGDL and FD-SGDL.}
    \label{memory_usage}
    \centering
    \begin{tabular}{lccccccc}
        \toprule    
        \multirow{2}{*}{Method} & \multirow{2}{*}{Grade} & Network Structure & \multirow{2}{*}{Para.} & \multirow{2}{*}{Total Opt.} & \multirow{2}{*}{Peak Mem. (MiB)} & \multirow{2}{*}{AC time (s)} & \multirow{2}{*}{TeRSE} \\
        & & (Width $\times$ Depth) & & & & & \\
        \midrule
        \multirow{4}{*}{FD-MGDL}
        & 1 & $256\times2$ & 66,817 & $3.34\times10^{7}$ & $3,640$ & $1,669$ & $9.30\times10^{-1}$\\
        & 2 & $256\times1$ & 66,049 & $1.32\times10^{8}$ & $2,659$ & $4,470$ & $1.06\times10^{-2}$\\
        & 3 & $256\times1$ & 66,049 & $1.32\times10^{8}$ & $2,417$ & $6,933$& $5.56\times10^{-3}$ \\
        & 4 & $256\times1$ & 66,049 & $1.32\times10^{8}$ & $2,418$ & $\mathbf{9,196}$ & $\mathbf{5.56\times10^{-3}}$ \\ 
        \midrule
        FD-SGDL & -- & $256\times5$ & 264,193 & $2.11\times10^{9}$ & $8,794$ & $47,792$ &$1.87\times 10^{-2}$ \\
        \bottomrule
    \end{tabular}
    \end{threeparttable}
\end{table}

The results in Table \ref{memory_usage} demonstrate the computational advantages of the MGDL strategy in terms of both optimization workload and peak memory consumption. The optimization work of an individual grade of FD-MGDL ranges from $3.34\times10^{7}$ to $1.32\times10^{8}$, while the accumulated optimization work over all four grades is approximately $4.30\times10^{8}$. By comparison, FD-SGDL requires approximately $2.11\times10^{9}$ optimization-work units. Consequently, the cumulative optimization workload of FD-MGDL is only about $20.3\%$ of that of FD-SGDL. This substantial difference arises from the distinct optimization strategies of the two methods. FD-SGDL optimizes all parameters of a single deep network simultaneously throughout the training process, resulting in a considerably larger optimization burden. In contrast, FD-MGDL decomposes the original approximation task into a sequence of residual-correction subproblems, each of which is solved by optimizing a relatively small shallow network while the previously trained grades remain fixed. The grade-wise training strategy therefore distributes the overall optimization task across several smaller parameter-optimization subproblems and substantially reduces the cumulative optimization workload.

Regarding memory consumption, the largest memory requirement of FD-MGDL occurs in the first grade, with a peak memory usage of approximately $3,640$ MiB. The subsequent grades require only $2,417$--$2,659$ MiB of peak memory, maintaining a stable memory footprint throughout the residual refinement process. In comparison, FD-SGDL reaches a peak memory consumption of $8,794$ MiB, which is approximately $58.6\%$ higher than that of the maximum memory requirement of FD-MGDL.

The improved computational efficiency of MGDL can be attributed to its hierarchical optimization strategy. In SGDL, the complete solution is approximated by a single deep network, and all intermediate activations of the entire architecture must be retained during backpropagation for gradient evaluation. In contrast, MGDL decomposes the learning process into multiple grades, where each grade employs an independent shallow network to approximate the residual correction. During the optimization of each grade, only the current network requires gradient storage, while the previously trained components remain fixed and do not participate in backpropagation. This sequential residual learning mechanism substantially reduces both optimization complexity and peak memory consumption.

\subsection{Statistical Robustness across Random Seeds}

Because neural network optimization depends on random parameter initialization, we evaluate the statistical sensitivity of the results across different random seeds. While all preceding experiments used a baseline random seed ($\text{seed}=1$), this robustness study supplements the baseline with two independent runs using $\text{seed}=2$ and $\text{seed}=12$. Specifically, we consider the 2D Helmholtz problem with the sinusoidal solution (Section~\ref{2d-sin}) at wavenumber $\kappa=100$, a near-resonant, high-wavenumber regime characterized by pronounced high-frequency oscillations. For each method, all hyperparameters, including network architecture, learning-rate schedule, and total epoch count, are held strictly identical to the baseline setup without any seed-specific retuning. Table~\ref{Tab-seed} reports the sample mean and standard deviation of the TeRSE metric alongside the cumulative computational time across all three seeds.

\begin{table}[!htb]
    \centering
    \setlength{\tabcolsep}{8pt}
    \begin{threeparttable}
    \caption{Statistical Summary over Three Independent Random Seeds.}
    \label{Tab-seed}
    \centering
    \begin{tabular}{lccccc}
        \toprule 
        Method & Random Seed & AC time (s) & Mean time (s) & TeRSE & Mean $\pm$ Std. Dev. \\
        \midrule
        \multirow{3}{*}{FD-MGDL} & 1\ \  & 9,196 & \multirow{3}{*}{9,297} & $5.56\times10^{-3}$ & \multirow{3}{*}{$6.88\times10^{-3}\pm 1.22\times10^{-3}$}\\
        & 2\ \ & 9,450 & & $7.13\times10^{-3}$ &  \\
        & 12  & 9,246 & & $7.96\times10^{-3}$ &  \\
        \midrule
        \multirow{3}{*}{FD-SGDL} & 1\ \  & 47,792 & \multirow{3}{*}{47,715} & $1.87\times10^{-2}$ & \multirow{3}{*}{$3.43\times10^{-2}\pm 2.40\times10^{-2}$}\\
        & 2\ \ & 47,739 &  & $6.19\times10^{-2}$ &  \\
        & 12 & 47,614 &  & $2.23\times10^{-2}$ & \\
        \bottomrule
    \end{tabular}
    \end{threeparttable}
\end{table}

As shown in Table \ref{Tab-seed}, FD-MGDL consistently achieves lower TeRSE values than FD-SGDL across all three random seeds. In particular, FD-MGDL obtains a mean TeRSE of $6.88\times10^{-3}$ with a standard deviation of $1.22\times10^{-3}$, whereas FD-SGDL yields $3.43\times10^{-2}\pm2.40\times10^{-2}$. The substantially smaller standard deviation of FD-MGDL indicates reduced sensitivity to random parameter initialization. These results provide additional empirical evidence that the multi-grade residual-correction strategy improves both optimization stability and computational efficiency.

\section{Detailed Experimental Configurations}

This appendix provides a summary of the experimental configurations used throughout the paper. 

All numerical experiments were conducted on a 64-bit workstation running Windows 11, equipped with an Intel(R) Core(TM) Ultra 9 285K processor operating at 3.70 GHz and 64 GB of system memory. All computations were performed exclusively on the CPU without GPU acceleration. The algorithms were implemented in Python 3.12 using PyTorch 2.12.0. All network parameters, input data, intermediate quantities, and loss evaluations were represented in double precision using \texttt{float64}. All methods were trained in full-batch mode. At each optimization iteration, the complete training set was used to evaluate the loss and update the network parameters. For example, for the two-dimensional tensor-product grids with $m=300$, $500$ and $700$, the corresponding full-batch sizes were $m^2=90,000$; $250,000$ and $490,000$ respectively. For Mscale, the full batch consisted of all randomly sampled training points specified in the corresponding experimental configuration. 

The eight tables below compile the detailed hyperparameter settings of all compared methods, including network architectures, activation functions, learning rate schedules, and training epochs.
FD-MGDL employs the adaptive FD-MGDL algorithm, in which the number of grades is automatically determined according to the problem difficulty. In all experiments, the first grade uses a shallow network with two hidden layers and $\sin$ activation, while each subsequent grade consists of a single hidden layer with ReLU activation. The width of all hidden layers in FD-MGDL is fixed at 256. The hyperparameter settings for FD-MGDL in the four experiments across different wavenumbers $\kappa$ are summarized in Tables \ref{hyperparameters-2dsin-MGDL}, \ref{hyperparameters-2dwave-MGDL}, \ref{hyperparameters-3dsin-MGDL}, \ref{hyperparameters-3dwave-MGDL}, which detail the grade-wise configurations used in the adaptive training process. The corresponding settings for the remaining comparison methods are provided in Tables \ref{hyperparameters-2dsin}, \ref{hyperparameters-2dwave}, \ref{hyperparameters-3dsin}, \ref{hyperparameters-3dwave}.

In our experiments, FD-MGDL and all baselines underwent empirical hyperparameter tuning, with the final configurations reported in Tables \ref{hyperparameters-2dsin-MGDL}--\ref{hyperparameters-3dwave}. We acknowledge that FD-MGDL involves grade-wise learning rates and epoch allocations, which results in a larger number of hyperparameter combinations than the baselines. However, we stress that this larger search space does not render FD-MGDL computationally more expensive during tuning. This is because FD-MGDL’s tuning is performed per grade: each trial only trains the newly added shallow subnetwork (single hidden layer), whose per-run cost is much lower than the full deep model. In contrast, evaluating a single candidate configuration for a baseline method requires retraining the entire deep network from scratch, which is considerably more expensive. Consequently, even though FD-MGDL may evaluate more configurations, its cumulative tuning time is  lower than that of the baseline methods.

\begin{table}[!htb]\centering
    \small
    \setlength{\tabcolsep}{2.5pt}
    \begin{threeparttable}
    \caption{Hyperparameter settings of FD-MGDL for the 2D Helmholtz problem \eqref{2d-Helmholtz-Dirichlet} with the exact solution \eqref{2d-sin-solution}.}
    \label{hyperparameters-2dsin-MGDL}
    \centering
    \begin{tabular}{cclll}
        \toprule
        $\kappa$  & Grade  & $t_{\max}$  & $t_{\min}$  & Epochs \\
        \midrule
        $50$  &6  & $\{10^{-1},10^{-2},10^{-3},10^{-3},10^{-3},10^{-3}\}$ & $\{10^{-2},10^{-3},10^{-4},10^{-3},10^{-3},10^{-3}\}$  & [400,3000,3000,2500,2500,1000]\\
        $100$  &4  & $\{10^{-1},10^{-1},10^{-2},10^{-2}\}$ & $\{10^{-1},10^{-2},10^{-3},10^{-3}\}$  & [500,2000,2000,2000]\\
        $150$  &5  & $\{10^{-1},10^{-1},10^{-2},10^{-3},10^{-2}\}$ & $\{10^{-2},10^{-1},10^{-3},10^{-3},10^{-3}\}$  & [500,1500,2000,2000,2000]\\
        $200$  &4  & $\{10^{-1},10^{-1},10^{-3},10^{-2}\}$ & $\{10^{-1},10^{-2},10^{-3},10^{-3}\}$  & [500,2000,2000,2000]\\
        \bottomrule
    \end{tabular}
    \end{threeparttable}
\end{table}

\begin{table}[!htb]\centering
    \small
    \setlength{\tabcolsep}{4pt}
    \centering
    \caption{Hyperparameter settings of FD-MGDL for the 2D Helmholtz problem \eqref{2d-Helmholtz-Dirichlet} with the exact solution \eqref{2d-wave-solution}.}
    \label{hyperparameters-2dwave-MGDL}
    \begin{tabular}{cclll}
        \toprule
        $\kappa$  & Grade  & $t_{\max}$  & $t_{\min}$  & Epochs \\
        \midrule
        $50$  &3  & $\{10^{-1},10^{-3},10^{-1}\}$ & $\{10^{-1},10^{-4},10^{-3}\}$  & [500,2000,2000]\\
        $100$  &4  & $\{10^{-1},10^{-2},10^{-3},10^{-3}\}$ & $\{10^{-2},10^{-3},10^{-4},10^{-4}\}$  & [500,2000,2000,2000]\\
        $150$  &3  & $\{10^{-1},10^{-2},10^{-3}\}$ & $\{10^{-2},10^{-3},10^{-3}\}$  & [500,1000,1000]\\
        $200$  &4  & $\{10^{-1},10^{-1},10^{-3},10^{-2}\}$ & $\{10^{-2},10^{-2},10^{-4},10^{-2}\}$  & [500,1000,1000,1000]\\
        \bottomrule
    \end{tabular}
\end{table}

\begin{table}[!htb]\centering
    \small
    \setlength{\tabcolsep}{4pt}
    \caption{Hyperparameter settings of FD-MGDL for the 3D Helmholtz problem \eqref{3d-Helmholtz-Dirichlet} with the exact solution \eqref{3d-sin-solution}.}
    \label{hyperparameters-3dsin-MGDL}
    \centering
    \begin{tabular}{cclll}
        \toprule
        $\kappa$  & Grade  & $t_{\max}$  & $t_{\min}$  & Epochs \\
        \midrule
        $20$  &5  & $\{10^{-1},10^{-2},10^{-3},10^{-3},10^{-3}\}$ & $\{10^{-1},10^{-3},10^{-4},10^{-3},10^{-3}\}$  & [500,2000,2000,2000,2000]\\
        $30$  &3  & $\{10^{-1},10^{-2},10^{-3}\}$ & $\{10^{-1},10^{-4},10^{-4}\}$  & [500,2500,3000]\\
        $40$ & 7  & $\{10^{-1},10^{-1},10^{-3}(\times5)\}$  & $\{10^{-1},10^{-1},10^{-3}(\times5)\}$  & $[500,2000,2000(\times5)]$\\
        $50$  &4  & $\{10^{-1},10^{-2},10^{-3},10^{-3}\}$ & $\{10^{-1},10^{-3},10^{-4},10^{-4}\}$  & [500,2000,2000,2000]\\
        \bottomrule
    \end{tabular}
\end{table}

\begin{table}[!htb]\centering
    \small
    \setlength{\tabcolsep}{4pt}
    \caption{Hyperparameter settings of FD-MGDL for the 3D Helmholtz problem \eqref{3d-Helmholtz-Dirichlet} with the exact solution \eqref{3d-wave-solution}.}
    \label{hyperparameters-3dwave-MGDL}
    \centering
    \begin{tabular}{cclll}
        \toprule
        $\kappa$  & Grade  & $t_{\max}$  & $t_{\min}$  & Epochs \\
        \midrule
        $20$  &4  & $\{10^{-1},10^{-1},10^{-3},10^{-3}\}$ & $\{10^{-2},10^{-1},10^{-3},10^{-3}\}$  & [500,1000,1000,1000]\\
        $30$  &4  & $\{10^{-1},10^{-1},10^{-2},10^{-3}\}$ & $\{10^{-4},10^{-1},10^{-3},10^{-4}\}$  & [500,1000,1000,1000]\\
        \bottomrule
    \end{tabular}
\end{table}

\begin{table}[!htb]\centering
    \small
    \setlength{\tabcolsep}{2.5pt}
    \begin{threeparttable}
    \caption{Hyperparameter settings of FD-SGDL, Mscale, FBPINN, SIREN, PINN and Pre-PINN for the 2D Helmholtz problem \eqref{2d-Helmholtz-Dirichlet} with the exact solution \eqref{2d-sin-solution}.}
    \label{hyperparameters-2dsin}
    \centering
    \begin{tabular}{cllll}
        \toprule
        $\kappa$  & Method  & Architecture (width $\times$ depth)  & Activation  & $\{t_{\max},t_{\min}\}$ / Epochs \\
        \midrule
        \multirow{4}{*}{$50$}
        & FD-SGDL, PINN, Pre-PINN  & $256\times7$   & $\sin\times2+$ReLU$\times5$   & $\{10^{-3},10^{-4}\}$ / 15,000 \\
        & SIREN  & $256\times7$  & $\sin\times7$  & $\{10^{-3},10^{-4}\}$ / 15,000 \\
        & Mscale  & $180\times3$ with scales $\{1,2,4,8,16,32\}$  & $\sin\times3$  & $\{10^{-3},-\}$ / 15,000 \\
        & FBPINN  & $40\times2$ with subdomains $15\times15$  & $\tanh\times2$  & $\{10^{-3},-\}$ / 15,000 \\
        \midrule
        \multirow{4}{*}{$100$}
        & FD-SGDL, PINN, Pre-PINN   & $256\times5$   & $\sin\times2+$ReLU$\times3$   & $\{10^{-2},-\}$ / 8,000 \\
        & SIREN  & $256\times5$   & $\sin\times5$   & $\{10^{-2},-\}$ / 8,000 \\
        & Mscale  & $160\times3$ with scales $\{1,2,4,8,16\}$  & $\sin\times3$  & $\{10^{-3},-\}$ / 8,000 \\
        & FBPINN  & $40\times2$ with subdomains $12\times12$  & $\tanh\times2$  & $\{10^{-3},-\}$ / 8,000 \\
        \midrule
        \multirow{4}{*}{$150$}
        & FD-SGDL, PINN, Pre-PINN  & $256\times6$   & $\sin\times2+$ReLU$\times4$   & $\{10^{-2},10^{-4}\}$ / 10,000 \\
        & SIREN  & $256\times6$   & $\sin\times6$   & $\{10^{-2},10^{-4}\}$ / 10,000 \\
        & Mscale  & $150\times3$ with scales $\{1,2,4,8,16,32,64\}$  & $\sin\times3$  & $\{5\times10^{-4},-\}$ / 10,000 \\
        & FBPINN  & $38\times2$ with subdomains $14\times14$  & $\tanh\times2$  & $\{10^{-3},-\}$ / 10,000 \\
        \midrule
        \multirow{4}{*}{$200$}
        & FD-SGDL, PINN, Pre-PINN  & $256\times5$   & $\sin\times2+$ReLU$\times3$   & $\{10^{-2},10^{-4}\}$ / 8,000 \\
        & SIREN  & $256\times5$   & $\sin\times5$   & $\{10^{-2},10^{-4}\}$ / 8,000 \\
        & Mscale  & $135\times3$ with scales $\{1,2,4,8,16,32,64\}$  & $\sin\times3$  & $\{5\times10^{-4},-\}$ / 8,000 \\
        & FBPINN  & $38\times2$ with subdomains $13\times13$  & $\tanh\times2$  & $\{10^{-3},-\}$ / 8,000 \\
        \bottomrule
    \end{tabular}
    \end{threeparttable}
\end{table}

\begin{table}[!htb]\centering
    \small
    \setlength{\tabcolsep}{4pt}
    \centering
    \caption{Hyperparameter settings of FD-SGDL, Mscale and FBPINN for the 2D Helmholtz problem \eqref{2d-Helmholtz-Dirichlet} with the exact solution \eqref{2d-wave-solution}.}
    \label{hyperparameters-2dwave}
    \centering
    \begin{tabular}{cllll}
        \toprule
        $\kappa$  & Method   & Architecture (width $\times$ depth)  & Activation  & $\{t_{\max},t_{\min}\}$ / Epochs \\
        \midrule
        \multirow{3}{*}{$50$}
        & FD-SGDL   & $256\times4$   & $\sin\times2+$ReLU$\times2$   & $\{10^{-1},10^{-4}\}$ / 6,000 \\
        & Mscale  & $140\times3$ with scales $\{1,2,4,8,16\}$  & $\sin\times3$  & $\{5\times10^{-4},-\}$ / 6,000 \\
        & FBPINN  & $32\times2$ with subdomains $13\times13$  & $\tanh\times2$  & $\{10^{-3},-\}$ / 6,000 \\
        \midrule
        \multirow{3}{*}{$100$}
        & FD-SGDL  & $256\times5$   & $\sin\times2+$ReLU$\times3$   & $\{10^{-1},10^{-2}\}$ / 8,000 \\
        & Mscale  & $160\times3$ with scales $\{1,2,4,8,16\}$  & $\sin\times3$  & $\{5\times10^{-4},-\}$ / 8,000 \\
        & FBPINN  & $32\times2$ with subdomains $15\times15$  & $\tanh\times2$  & $\{10^{-3},-\}$ / 8,000 \\
        \midrule
        \multirow{3}{*}{$150$}
        & FD-SGDL  & $256\times4$   & $\sin\times2+$ReLU$\times2$   & $\{10^{-1},10^{-4}\}$ / 5,000 \\
        & Mscale  & $120\times3$ with scales $\{1,2,4,8,16,32,64\}$  & $\sin\times3$  & $\{5\times10^{-4},-\}$ / 5,000 \\
        & FBPINN  & $32\times2$ with subdomains $13\times13$  & $\tanh\times2$  & $\{10^{-3},-\}$ / 5,000 \\
        \midrule
        \multirow{3}{*}{$200$}
        & FD-SGDL  & $256\times5$   & $\sin\times2+$ReLU$\times3$   & $\{10^{-1},10^{-2}\}$ / 5,000 \\
        & Mscale  & $164\times3$ with scales $\{1,2,4,8,16\}$  & $\sin\times3$  & $\{5\times10^{-4},-\}$ / 5,000 \\
        & FBPINN  & $32\times2$ with subdomains $15\times15$  & $\tanh\times2$  & $\{10^{-3},-\}$ / 5,000 \\
        \bottomrule
    \end{tabular}
\end{table}

\begin{table}[!htb]\centering
    \small
    \setlength{\tabcolsep}{4pt}
    \caption{Hyperparameter settings of FD-SGDL, Mscale and FBPINN for the 3D Helmholtz problem \eqref{3d-Helmholtz-Dirichlet} with the exact solution \eqref{3d-sin-solution}.}
    \label{hyperparameters-3dsin}
    \centering
    \begin{tabular}{cllll}
        \toprule
        $\kappa$  & Method  & Architecture (width $\times$ depth)  & Activation  & $\{t_{\max},t_{\min}\}$ / Epochs \\
        \midrule
        \multirow{3}{*}{$20$}
        & FD-SGDL  & $256\times6$   & $\sin\times2+$ReLU$\times4$   & $\{10^{-3},-\}$ / 10,000 \\
        & Mscale  & $164\times3$ with scales $\{1,2,4,8,16,32\}$  & $\sin\times3$  & $\{3\times10^{-4},-\}$ / 10,000 \\
        & FBPINN  & $40\times2$ with subdomains $5\times6\times6$  & $\tanh\times2$  & $\{10^{-3},-\}$ / 10,000 \\
        \midrule
        \multirow{3}{*}{$30$}
        & FD-SGDL   & $256\times4$   & $\sin\times2+$ReLU$\times2$   & $\{10^{-1},10^{-2}\}$ / 8,000 \\
        & Mscale    & $127\times3$ with scales $\{1,2,4,8,16,32\}$  & $\sin\times3$  & $\{3\times10^{-4},-\}$ / 8,000 \\
        & FBPINN    & $37\times2$ with subdomains $5\times5\times5$  & $\tanh\times2$  & $\{10^{-3},-\}$ / 8,000 \\
        \midrule
        \multirow{3}{*}{$40$}
        & FD-SGDL  & $256\times8$   & $\sin\times2+$ReLU$\times6$   & $\{10^{-2},10^{-3}\}$ / 15,000 \\
        & Mscale   & $168\times3$ with scales $\{1,2,4,8,16,32,64,128\}$  & $\sin\times3$  & $\{2\times10^{-4},-\}$ / 15,000 \\
        & FBPINN   & $[43,44]$ with subdomains $6\times6\times6$  & $\tanh\times2$  & $\{10^{-3},-\}$ / 15,000 \\
        \midrule
        \multirow{3}{*}{$50$}
        & FD-SGDL   & $256\times5$   & $\sin\times2+$ReLU$\times3$   & $\{10^{-2},-\}$ / 8,000 \\
        & Mscale    & $147\times3$ with scales $\{1,2,4,8,16,32\}$  & $\sin\times3$  & $\{2\times10^{-4},-\}$ / 8,000 \\
        & FBPINN    & $43\times2$ with subdomains $5\times5\times5$  & $\tanh\times2$  & $\{10^{-3},-\}$ / 8,000 \\
        \bottomrule
    \end{tabular}
\end{table}

\begin{table}[!htb]\centering
    \small
    \setlength{\tabcolsep}{4pt}
    \begin{threeparttable}
    \caption{Hyperparameter settings of FD-SGDL, Mscale and FBPINN for the 3D Helmholtz problem \eqref{3d-Helmholtz-Dirichlet} with the exact solution \eqref{3d-wave-solution}. {}}
    \label{hyperparameters-3dwave}
    \centering
    \begin{tabular}{cllll}
        \toprule
        $\kappa$  & Method  & Architecture (width $\times$ depth)  & Activation  & $\{t_{\max},t_{\min}\}$ / Epochs \\
        \midrule
        \multirow{3}{*}{$20$}
        & FD-SGDL  & $256\times5$   & $\sin\times2+$ReLU$\times3$   & $\{10^{-1},10^{-3}\}$ / 5,000 \\
        & Mscale   & $147\times3$ with scales $\{1,2,4,8,16,32\}$  & $\sin\times3$  & $\{2\times10^{-4},-\}$ / 5,000 \\
        & FBPINN   & $32\times2$ with subdomains $6\times6\times6$  & $\tanh\times2$  & $\{10^{-3},-\}$ / 5,000 \\
        \midrule
        \multirow{3}{*}{$30$}
        & FD-SGDL   & $256\times5$   & $\sin\times2+$ReLU$\times3$   & $\{10^{-1},10^{-2}\}$ / 5,000 \\
        & Mscale    & $147\times3$ with scales $\{32,32,32,32,32,32\}$  & $\sin\times3$  & $\{10^{-4},-\}$ / 5,000 \\
        & FBPINN    & $32\times2$ with subdomains $6\times6\times6$  & $\tanh\times2$  & $\{10^{-3},-\}$ / 5,000 \\
        \bottomrule
    \end{tabular}
    \begin{tablenotes}
        \footnotesize
        \item The constant-scale configuration of Mscale is adopted directly from the original MscaleDNN paper \cite{liu2020multi} (see Section 5.4, configuration ``MscaleDNN-2(1)''). This configuration is used solely for faithful reproduction of the original MscaleDNN methodology.
    \end{tablenotes}
    \end{threeparttable}
\end{table}

\bibliographystyle{unsrt}
\bibliography{refs}

\end{document}